\theoremstyle{plain}
\newtheorem{thm}{Theorem}[section]
\newtheorem{cor}[thm]{Corollary}
\newtheorem{lem}[thm]{Lemma}
\newtheorem{prop}[thm]{Proposition}
\newtheorem*{claim*}{Claim}
\theoremstyle{definition}
\newtheorem{defn}[thm]{Definition}
\newtheorem{rem}[thm]{Remark}
\newtheorem{op}[thm]{Open Problem}
\newcommand{\ar}{{\mathcal{R}}}
\newcommand{\el}{{\mathcal{L}}}
\newcommand{\eh}{{\mathcal{H}}}
\newcommand{\dee}{{\mathcal{D}}}
\newcommand{\jay}{{\mathcal{J}}}
\renewcommand{\l}{\mbox{${\langle}$}}
\renewcommand{\r}{\mbox{${\rangle}$}}
\renewcommand{\a}{\alpha}
\renewcommand{\b}{\beta}
\newcommand{\g}{\gamma}
\renewcommand{\d}{\delta}
\newcommand{\lam}{\lambda}
\newcommand{\s}{\sigma}
\newcommand{\Sig}{\Sigma}
\renewcommand{\th}{\theta}
\renewcommand{\k}{\kappa}
\newcommand{\w}{\omega}
\newcommand{\al}{\aleph_0}
\newcommand{\ha}{\widehat{\alpha}}
\newcommand{\hb}{\widehat{\beta}}
\newcommand{\hg}{\widehat{\gamma}}
\newcommand{\ta}{\widetilde{\alpha}}
\newcommand{\tb}{\widetilde{\beta}}
\newcommand{\tg}{\widetilde{\gamma}}
\newcommand{\tnu}{\widetilde{\nu}}
\renewcommand{\phi}{\varphi}
\newcommand{\N}{\mathbb{N}}
\DeclareMathOperator{\Sym}{\mathcal{S}\hspace{-0.1em}}
\DeclareMathOperator{\PT}{\mathcal{PT}\!}
\DeclareMathOperator{\T}{\mathcal{T}\!}
\DeclareMathOperator{\B}{\mathcal{B}}
\renewcommand{\P}{\mathcal{P}}
\DeclareMathOperator{\PB}{\mathcal{PB}}
\DeclareMathOperator{\I}{\mathcal{I}\hspace{-0.1em}}
\DeclareMathOperator{\F}{\mathcal{F}\!}
\renewcommand{\H}{\mathcal{H}}
\DeclareMathOperator{\Inj}{\mathcal{I}\hspace{-0.1em}\mathit{nj}\hspace{-0.1em}}
\DeclareMathOperator{\BL}{\mathcal{BL}}
\DeclareMathOperator{\Surj}{\mathcal{S}\hspace{-0.1em}\mathit{urj}\hspace{-0.1em}}
\DeclareMathOperator{\DBL}{\mathcal{DBL}}
\DeclareMathOperator{\im}{im}
\DeclareMathOperator{\dom}{dom}
\newcounter{ncols}
\newcounter{incols}
\newcommand\nc\newcommand
\renewcommand
\nc\AND{\qquad\text{and}\qquad}
\nc\ANd{\quad\text{and}\quad}
\nc\COMMA{,\qquad}
\nc\COMMa{,\quad}
\nc\WHERE{\qquad\text{where}\qquad}
\rnc\iff{\ \Leftrightarrow\ }
\nc\IFf{\quad \Leftrightarrow\quad }
\nc\Iff{\ \ \Leftrightarrow\ \ }
\nc\IFF{\qquad \Leftrightarrow\qquad }
\rnc\implies{\ \Rightarrow\ }
\nc\IMPLIES{\qquad \Rightarrow\qquad }
\newcommand{\multwk}[1]{\cdot}
\nc\Proj[1]{\overline{#1}}
\nc\norm[1]{\langle\!\langle#1\rangle\!\rangle}
\rnc\emptyset{\varnothing}
\nc\pc[2]{(#1,#2)^\sharp}
\newcommand{\uv}[1]{\fill (#1,2)circle(.17);}
\newcommand{\lv}[1]{\fill (#1,0)circle(.17);}
\newcommand{\uvs}[1]{{\foreach \x in {#1} { \uv{\x}}}}
\newcommand{\lvs}[1]{{\foreach \x in {#1} { \lv{\x}}}}
\newcommand{\darcx}[3]{\draw(#1,0)arc(180:90:#3) (#1+#3,#3)--(#2-#3,#3) (#2-#3,#3) arc(90:0:#3);}
\newcommand{\darc}[2]{\darcx{#1}{#2}{.4}}
\newcommand{\uarcx}[3]{\draw(#1,2)arc(180:270:#3) (#1+#3,2-#3)--(#2-#3,2-#3) (#2-#3,2-#3) arc(270:360:#3);}
\newcommand{\uarc}[2]{\uarcx{#1}{#2}{.4}}
\newcommand{\stline}[2]{\draw(#1,2)--(#2,0);}
\nc{\buv}[1]{\fill (#1,2)circle(.18);}
\nc{\buvs}[1]{{
\foreach \x in {#1}
{ \buv{\x}}
}}
\nc{\blv}[1]{\fill (#1,0)circle(.18);}
\nc{\blvs}[1]{{
\foreach \x in {#1}
{ \blv{\x}}
}}
\nc{\uarcs}[1]{
{\foreach \x/\y in {#1}
{ \uarc{\x}{\y} }
}
}
\nc{\darcs}[1]{
{\foreach \x/\y in {#1}
{ \darc{\x}{\y} }
}
}
\nc{\darcxhalf}[3]{\draw(#1,0)arc(180:90:#3) (#1+#3,#3)--(#2,#3) ;}
\nc{\darchalf}[2]{\darcxhalf{#1}{#2}{.4}}
\nc{\uarcxhalf}[3]{\draw(#1,2)arc(180:270:#3) (#1+#3,1.5-#3)--(#2,1.5-#3) ;}
\nc{\uarchalf}[2]{\uarcxhalf{#1}{#2}{.4}}
\nc{\colv}[3]{\fill[#3] (#1,#2)circle(.17);}
\nc{\uvert}[1]{\fill (#1,2)circle(.2);}
\rnc{\lvert}[1]{\fill (#1,0)circle(.2);}
\nc{\custpartn}[3]{{\lower1.4 ex\hbox{
\begin{tikzpicture}[scale=.3]
\foreach \x in {#1}
{ \uvert{\x}  }
\foreach \x in {#2}
{ \lvert{\x}  }
#3 \end{tikzpicture}
}}}
\begin{document}
\title{\vspace{-3em}On the diameter of semigroups of transformations and partitions}
\author{James East, Victoria Gould, Craig Miller, Thomas Quinn-Gregson, Nik Ru{\v s}kuc}
\address{Centre for Reseach in Mathematics and Data Science, Western Sydney University, Australia}
\email{J.East@WesternSydney.edu.au}
\address{Department of Mathematics, University of York, UK, YO10 5DD}
\email{victoria.gould@york.ac.uk, craig.miller@york.ac.uk}
\address{~\vspace{-1em}}
\email{tquinngregson@gmail.com}
\address{School of Mathematics and Statistics, St Andrews, Scotland, UK, KY16 9SS}
\email{nik.ruskuc@st-andrews.ac.uk}
\maketitle

\vspace{-1em}
\begin{abstract}
For a semigroup $S$ whose universal right congruence is finitely generated (or, equivalently, a semigroup satisfying the homological finiteness property of being type right-$FP_1$), the right diameter of $S$ is a parameter that expresses how `far apart' elements of $S$ can be from each other, in a certain sense.  To be more precise, for each finite generating set $U$ for the universal right congruence on $S,$ we have a metric space $(S,d_U)$ where $d_U(a,b)$ is the minimum length of derivations for $(a,b)$ as a consequence of pairs in $U$; the right diameter of $S$ with respect to $U$ is the diameter of this metric space.  The right diameter of $S$ is then the minimum of the set of all right diameters with respect to finite generating sets.

We develop a theoretical framework for establishing whether a semigroup of transformations or partitions on an arbitrary infinite set $X$ has a finitely generated universal right/left congruence, and, if it does, determining its right/left diameter.  We apply this to prove results such as the following.  Each of the monoids of all binary relations on $X,$ of all partial transformations on $X,$ and of all full transformations on $X,$ as well as the partition and partial Brauer monoids on $X,$ have right diameter 1 and left diameter 1.  The symmetric inverse monoid on $X$ has right diameter 2 and left diameter 2.  The monoid of all injective mappings on $X$ has right diameter 4, and its minimal ideal (called the Baer-Levi semigroup on $X$) has right diameter 3, but neither of these two semigroups has a finitely generated universal left congruence.  On the other hand, the semigroup of all surjective mappings on $X$ has left diameter 4, and its minimal ideal has left diameter 2, but neither of these semigroups has a finitely generated universal right congruence.
\end{abstract}
~\\
\textit{Keywords}: Transformation semigroup, partition monoid, (congruence) generating set, derivation sequence, diameter.\\
\textit{Mathematics Subject Classification 2020}: 20M10, 20M20.
\maketitle

\section{Introduction}\label{sec:intro}

This paper is concerned with the semigroup finiteness condition of the universal right congruence being finitely generated, and the related parameter of right diameter, as well as the left-right duals of these notions.

For a semigroup $S$ whose universal right congruence is generated by a finite set $U,$ the right diameter of $S$ with respect to $U$ is, informally, the supremum of the minimum lengths of derivations for pairs $(a,b)\in S\times S$ as a consequence of those in $U.$  The right diameter of $S$ is the minimum of the set of all right diameters with respect to finite generating sets.  Thus, a semigroup has finite right diameter if its universal right congruence is finitely generated and there is a bound on the length of sequences required to relate any two elements.  More precise definitions regarding the notion of diameter will be given in Section \ref{sec:prelim}.

The property of having finite right (resp.\ left) diameter is also known as being {\em right} (resp.\ {\em left}) {\em pseudo-finite}.  Left pseudo-finite semigroups were first studied by White in \cite{White:2017} in the context of Banach algebras.  This work was motivated by a conjecture of Dales and {\.Z}elazko, stating that a unital Banach algebra in which every maximal left ideal is finitely generated is necessarily finite dimensional.  
It was also noted in \cite{White:2017} that for weakly right cancellative monoids, which include groups, being left pseudo-finite coincides with being finite.

Dandan et al.\ undertook the first comprehensive study of semigroups with a finitely generated universal left congruence, with appropriate specialisions to left pseudo-finite semigroups \cite{Dandan}.  The former class of semigroups was shown to be equivalent to a number of previously-studied classes, including those semigroups satisfying the homological finiteness property of being type left-$FP_1$ \cite[Theorem 3.10]{Dandan} (the equivalence of some of these conditions had previously been established by Kobayashi \cite{Kob}).  
An interesting question raised in \cite[Open Question 8.10]{Dandan} is whether every (left) pseudo-finite semigroup has a completely simple minimal ideal.  The article \cite{Gould} sought to address this question systematically.  It found that for pseudo-finite semigroups lying in some important classes, such as orthodox semigroups, completely regular semigroups and commutative semigroups, having a completely simple minimal ideal {\em is} necessary, but in general a pseudo-finite semigroup may have a minimal ideal that it not completely simple, or may have no minimal ideal at all.

The notion of right/left diameter was introduced in \cite{Gould} as a useful tool for proving that certain semigroups are right/left pseudo-finite.  
It was observed that the property of having right diameter 1 is equivalent to a certain well-studied notion, namely that of the diagonal right act being finitely generated \cite[Proposition 3.6]{Gould}.  For a semigroup $S,$ the diagonal right $S$-act is the set $S\times S$ under the right action given by $(a,b)c=(ac,bc).$
Diagonal acts first appear, implicitly, in \cite{Bulman:1989}, and they were then formally defined and studied in \cite{Robertson:2001}.  A systematic investigation into generation of diagonal acts was undertaken in \cite{Gallagher:thesis}, and some of the most intriguing results concerned certain infinite semigroups of transformations and relations \cite{Gallagher:2005}.  In particular, it was shown that, for any infinite set $X,$ the diagonal right and left acts are monogenic for the monoids $\B_X$ of all binary relations on $X,$ $\T_X$ of all transformations on $X,$ and $\PT_X$ of all partial transformations on $X.$

Given the above findings concerning certain transformation semigroups, it is natural to consider similar kinds of semigroups when searching for semigroups with finite right/left diameter.  Indeed, the first example found of a right pseudo-finite semigroup with a minimal ideal that is not completely simple was the Baer-Levi semigroup on an infinite set $X$ \cite[Remark 7.3]{Miller:2020}, and another such example is the monoid of all injective mappings on $X$ \cite[Proposition 4.4]{Gould}.  Moreover, the first example exhibited of a right (and left) pseudo-finite semigroup with {\em no} minimal ideal was a certain transformation monoid denoted $\mathcal{U}_X$ \cite[Example 8.1]{Gould}.

A class of semigroups that exhibit some similar behaviour to transformation semigroups is that of the so-called diagram monoids, which have recently come into prominence; see \cite{FitzGerald}.  In particular, the partition monoid on a set $X,$ denoted $\P_X$, contains natural copies of many `classical' transformation monoids, including the symmetric group $\Sym_X$, the full transformation monoid $\T_X$ and the symmetric inverse monoid $\I_X$.  The importance of these classical monoids derives mainly from the well-known Cayley Theorems, stating that every group embeds into some $\Sym_X$ and every semigroup into some $\T_X$ \cite[Theorem 1.1.2]{Howie}, and the Wagner-Preston Theorem, stating that every inverse semigroup embeds into some $\I_X$ \cite[Theorem 5.1.7]{Howie}.  Thus, a common theme in papers on partition monoids is the extent to which their behaviour resembles those of classical transformation monoids; for example, see the article \cite{East}, which classifies all congruences on $\P_X$ and the partial Brauer monoid $\PB_X$, where $X$ is an arbitrary infinite set.
Given the aforementioned results concerning certain classical transformation monoids in relation to diameter, it is natural to explore monoids of partitions as a potential source of further examples of semigroups with finite right/left diameter.

The purpose of this article is to systematically investigate, for various infinite semigroups of transformations and partitions, whether each such semigroup has a finitely generated universal right/left congruence, and, if so, determine its right/left diameter.  The main results are summarised in Table \ref{table:results}.

The paper is organised as follows.  In Section \ref{sec:prelim} we provide the necessary prelimary material and summarise the main results of the paper.  Various transformation semigroups are considered in Sections \ref{sec:right} and \ref{sec:left}.  Section \ref{sec:right} is concerned with the universal right congruence and right diameter, and Section \ref{sec:left} is the left-right counterpart of Section \ref{sec:right}.  In both these sections, we aim to prove general results that can be applied to a number of transformation semigroups of concern.  Particularly noteworthy results are obtained for certain subsemigroups of the monoids $\Inj_X$ of all injective mappings on $X$ and $\Surj_X$ of all surjective mappings $X.$  In particular, we prove that:
\begin{itemize}
\item the minimal ideal $\BL_X$ of $\Inj_X$, called the Baer-Levi semigroup on $X,$ has right diameter 3 (Theorem \ref{thm:BL});
\item a submonoid $S$ of $\Inj_X$ containing the symmetric group $\Sym_X$ has right diameter 4 if and only if it contains $\BL_X$ (Theorem \ref{thm:S,Inj});
\item the minimal ideal $\DBL_X$ of $\Surj_X$, called the dual Baer-Levi semigroup on $X,$ has left diameter 2 (Theorem \ref{thm:DBL});
\item the monoid $\Surj_X$ has left diameter 4 (Theorem \ref{thm:S,Surj}).
\end{itemize} 
Finally, in Section \ref{sec:partition} we prove that both the partition monoid $\P_X$ and the partial Brauer monoid $\PB_X$ have right diameter 1 and left diameter 1. 
It is perhaps intriguing that all diameters computed in this paper and shown in Table \ref{table:results} are `small', specifically $\leq 4$.
At this point we do not know any examples of `natural' semigroups of transformations or partitions whose diameters are finite and greater than $4$.

\section{Notation and Summary of Results}\label{sec:prelim}

In this section we provide the necessary preliminary material on semigroups and summarise the main results of the article.  We refer the reader to \cite{Howie} for a more comprehensive introduction to the basic semigroup concepts defined here.

\subsection{Diameter of semigroups}\label{subsec:diameter}

Let $S$ be a semigroup.  We denote by $S^1$ the monoid obtained from $S$ by adjoining an identity if necessary (if $S$ is already a monoid, then $S^1=S$).

A {\em right ideal} of $S$ is a subset $I$ such that $IS\subseteq I.$  A subset $U$ of a right ideal $I$ is a {\em generating set} for $I$ if $I=US^1;$ $I$ is said to be {\em finitely generated} if it has a finite generating set.  Of course, $S$ is a right ideal of itself.  When considering $S$ being generated by a set {\em as a right ideal}, we shall write $S$ as $S^r$.  Thus, `$S^r$ is generated by $U$' means $S=US^1.$

An equivalence relation $\rho$ on $S$ is a {\em right congruence} if $(a, b)\in\rho$ implies $(as, bs)\in\rho$ for all $s\in S$. 
For $U\subseteq S\times S,$ the {\em right congruence generated by} $U$ is the smallest right congruence on $S$ containing $U$; we denote this right congruence by $\l U\r.$  

\begin{lem}\label{lem:sequence}\cite[Lemma I.\,4.\,37]{kkm}
Let $S$ be a semigroup, let $U$ be a subset of $S\times S,$ and let $\rho$ be the right congruence generated by $U.$  For any $a, b\in S,$ we have $(a,b)\in\rho$ if and only if either $a=b$ or there exists a sequence $$a=u_1s_1,\ v_1s_1=u_2s_2,\ \dots,\ v_ns_n=b$$ for some $n\in\N,$ where $(u_i,v_i)\in U$ or $(v_i,u_i)\in U,$ and $s_i\in S^1,$ for each $i\in\{1,\dots,n\}.$
\end{lem}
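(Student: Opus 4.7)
The plan is to prove the lemma by defining a relation $\sigma$ on $S$ directly from the sequence condition on the right-hand side, and then showing that $\sigma$ coincides with $\rho$. Concretely, let $\sigma$ consist of all pairs $(a,b)\in S\times S$ such that either $a=b$ or a sequence of the stated form exists witnessing $(a,b)$. The strategy splits into two parts: (i) verify that $\sigma$ is a right congruence containing $U$, so that $\rho\subseteq\sigma$ by minimality of $\rho$; and (ii) verify that any right congruence containing $U$ must contain $\sigma$, so that $\sigma\subseteq\rho$.

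For part (i), I would check the axioms in turn. Reflexivity is built into the definition. For symmetry, given a witnessing sequence $a=u_1s_1,\ v_1s_1=u_2s_2,\dots,v_ns_n=b$, one reads it in reverse to obtain $b=v_ns_n,\ u_ns_n=v_{n-1}s_{n-1},\dots,u_1s_1=a$, which is a valid sequence because the condition ``$(u_i,v_i)\in U$ or $(v_i,u_i)\in U$'' is itself symmetric in $u_i$ and $v_i$. For transitivity, concatenate witnessing sequences for $(a,b)$ and $(b,c)$. For right compatibility, given a sequence for $(a,b)$ and any $t\in S$, multiplying each $s_i$ on the right by $t$ produces a sequence $at=u_1(s_1t),\ v_1(s_1t)=u_2(s_2t),\dots,v_n(s_nt)=bt$ of the required form, since $s_i\in S^1$ and $t\in S$ together yield $s_it\in S\subseteq S^1$. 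Finally, $U\subseteq\sigma$: for any $(u,v)\in U$, the one-step sequence with $n=1$, $u_1=u$, $v_1=v$, and $s_1=1$ witnesses $(u,v)\in\sigma$.

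For part (ii), let $\tau$ be any right congruence on $S$ with $U\subseteq\tau$. Given $(a,b)\in\sigma$ with $a\neq b$, take a witnessing sequence. For each $i$, the pair $(u_i,v_i)$ lies in $\tau$ (as $\tau$ contains $U$ and is symmetric), and so right compatibility of $\tau$ yields $(u_is_i,v_is_i)\in\tau$; transitivity of $\tau$ then chains these together with the equalities $v_is_i=u_{i+1}s_{i+1}$ to give $(a,b)\in\tau$. Applying this with $\tau=\rho$ gives $\sigma\subseteq\rho$, completing the proof.

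The argument is entirely routine; the only point requiring modest care is the bookkeeping around $S^1$. In particular, one must permit $s_i=1$ so that the degenerate case $n=1$ with $s_1=1$ can handle pairs from $U$ themselves (as in the verification that $U\subseteq\sigma$), and one must observe that $s_it\in S^1$ whenever $s_i\in S^1$ and $t\in S$ in order to close the right-compatibility step. Beyond this, no genuine obstacle is expected.
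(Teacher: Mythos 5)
Your proof is correct and complete: it is the standard argument showing that the relation defined by the existence of such sequences is itself a right congruence containing $U$ and is contained in every right congruence containing $U$. The paper does not prove this lemma at all --- it simply cites it from Kilp--Knauer--Mikhalev \cite[Lemma I.4.37]{kkm} --- so there is nothing to compare against, but your argument is exactly the expected textbook proof, including the careful handling of $s_i\in S^1$ versus $s_it\in S$.
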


A sequence of the form given in Lemma \ref{lem:sequence} is referred to as a {\em $U$-sequence from $a$ to $b$ of length $n$}.  
If $a=b,$ we consider that $a$ and $b$ are related by a $U$-sequence of length 0.  
If the generating set $U$ consists of a single pair $(u,v),$ we may speak of $(u,v)$-sequences rather than $U$-sequences.

The universal relation $\w_S=S\times S$ is certainly a right congruence on $S.$  When viewing this relation as a right congruence, we shall denote it by $\w_S^r$.  If $U$ is a generating set for $\w_S^r$, we shall write $\w_S^r=\l U\r.$

Consider a set $U\subseteq S\times S$ such that $\w_S^r=\l U\r$.
For any $a, b\in S,$ let $d_U^r(a,b)$ denote the least non-negative integer $n$ such that there is a $U$-sequence of length $n$ from $a$ to $b.$ 
It is easy to see that $d_U^r : S\times S\to\{0,1,2,\dots\}$ is a metric on $S.$

\begin{defn}
Let $S$ be a semigroup.
\begin{itemize}[leftmargin=*]
\item If $\w_S^r=\l U\r$, we call the diameter of the metric space $(S,d_U^r)$ the {\em right $U$-diameter} of $S$ and denote it by $D_r(U,S)$; that is, 
\[D_r(U,S)=\sup\{d_U^r(a,b)\::\: a, b\in S\}.\]
\item If $\w_S^r$ is finitely generated, we define the {\em right diameter} of $S$ to be
$$D_r(S)=\min\{D_r(U,S) : \w_S^r=\l U\r, |U|<\al\}.$$
\end{itemize}
\end{defn}

Note that if $U$ and $U'$ are two finite generating sets for $\w_S^r,$ then $D_r(U,S)$ is finite if and only if $D_r(U',S)$ is finite \cite[Lemma 2.5]{Dandan}.  We make the following easy observation.

\begin{lem}\label{lem:w}
Let $S$ be a non-trivial semigroup.  If $\w_S^r=\l U\r$ then, letting  
$$V=\{v\in S : \exists u\in S\text{ such that }(u,v)\in U\text{ or }(v,u)\in U\},$$
we have $\w_S^r=\l V\times V\r$ and $D_r(V\times V,S)\leq D_r(U,S).$  Furthermore, we have $S=VS^1$.  In particular, if $\w_S^r$ is finitely generated then so is $S^r$.
\end{lem}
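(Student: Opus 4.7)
The plan is to verify the three assertions in sequence, each by a short argument using the definitions and Lemma \ref{lem:sequence}.

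First, for the equality $\w_S^r=\l V\times V\r$: the key observation is that $U\subseteq V\times V$ by construction, since whenever $(u,v)\in U$ or $(v,u)\in U$, both $u$ and $v$ lie in $V$. Hence
$$\w_S^r=\l U\r\subseteq\l V\times V\r\subseteq\w_S^r,$$
and equality follows. For the inequality $D_r(V\times V,S)\leq D_r(U,S)$, I would note that since $U\subseteq V\times V$, every $U$-sequence from $a$ to $b$ is automatically a $(V\times V)$-sequence of the same length. Therefore $d_{V\times V}^r(a,b)\leq d_U^r(a,b)$ pointwise, and taking the supremum over $a,b\in S$ yields the claimed inequality on diameters.

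The substantive step is proving $S=VS^1$, which is where the hypothesis of non-triviality is used. Given an arbitrary $a\in S$, I would invoke non-triviality to pick some $b\in S\sm\{a\}$, so that $(a,b)\in\w_S^r=\l U\r$ with $a\neq b$. By Lemma \ref{lem:sequence} there exists a non-empty $U$-sequence
$$a=u_1s_1,\ v_1s_1=u_2s_2,\ \dots,\ v_ns_n=b$$
with $(u_1,v_1)\in U$ or $(v_1,u_1)\in U$. In either case $u_1\in V$ by definition of $V$, so $a=u_1s_1\in VS^1$. The reverse inclusion $VS^1\subseteq S$ is trivial.

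The final assertion is then immediate: if $\w_S^r$ is finitely generated, we may take $U$ finite, and then $|V|\leq 2|U|$ is finite, so $S=VS^1$ exhibits $S^r$ as generated by the finite set $V$. There is no real obstacle here; the only point requiring mild care is the use of non-triviality to obtain a $b\neq a$, without which Lemma \ref{lem:sequence} produces only the empty sequence and gives no information about $a$.
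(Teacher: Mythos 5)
Your proof is correct and is exactly the intended argument: the paper states this as an ``easy observation'' and omits the proof, and your three steps (the inclusion $U\subseteq V\times V$ giving both the generation claim and the diameter inequality, and the use of non-triviality together with Lemma \ref{lem:sequence} to extract $a=u_1s_1\in VS^1$) fill it in correctly. You are also right to flag that non-triviality is precisely what rules out the degenerate case where only the empty sequence is available.
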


We shall often abuse terminology by saying that $\w_S^r$ is generated by a subset $V$ of $S$ to mean that $\w_S^r$ is generated by $V\times V$, and also write $D_r(V,S)$ in place of $D_r(V\times V,S).$  It follows from Lemma \ref{lem:w} that if $\w_S^r$ is finitely generated then there exists a finite subset $V\subseteq S$ such that $D_r(S)=D_r(V,S).$

We now provide some results that will be useful later in the paper.  

\begin{lem}\label{lem:rightideal}
Let $S$ be a monoid and let $I$ be a right ideal of $S.$  If $\w_I^r$ is finitely generated, then $\w_S^r$ is finitely generated.  Moreover, we have $D_r(S)\leq D_r(I)+2.$
\end{lem}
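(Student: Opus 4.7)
The plan is to lift a finite generating set for $\w_I^r$ to one for $\w_S^r$ by adjoining a single additional pair that enables any element of $S$ to be `pulled into' $I$ in one step.

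First, I would choose a finite subset $U\subseteq I\times I$ generating $\w_I^r$ with $D_r(U,I)=D_r(I)$, fix any element $e\in I$, and set $V=U\cup\{(e,1_S)\}$, where $1_S$ is the identity of the monoid $S$. I claim that $V$ is a finite generating set for $\w_S^r$ satisfying $D_r(V,S)\leq D_r(I)+2$; both assertions of the lemma follow at once, since $D_r(S)\leq D_r(V,S)$ by definition.

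The central observation is that, for any $s\in S$, the pair $(e,1_S)\in V$ applied with multiplier $s\in S^1$ gives a $V$-sequence of length one from $s=1_S\cdot s$ to $es=e\cdot s$, and $es\in I$ because $I$ is a right ideal. Given arbitrary $a,b\in S$, I would therefore splice together three pieces: a single step $a\to ea$, a $U$-sequence of length at most $D_r(I)$ from $ea$ to $eb$ inside $I$, and a final step $eb\to b$. The total length is at most $D_r(I)+2$, as required.

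The one subtlety is that the middle sequence is a priori a $U$-sequence in $I$ (with multipliers drawn from $I^1$), and must be reinterpreted as a $U$-sequence in $S$ (with multipliers in $S^1$). This is routine: the multipliers from $I$ are already elements of $S$, and any occurrence of the adjoined identity of $I^1$ can be replaced by $1_S\in S$, preserving each equality of the form $u_is_i=v_{i-1}s_{i-1}$ because every $u_i,v_i$ lies in $I\subseteq S$ and right-multiplication by $1_S$ in $S$ is trivial. I do not anticipate any genuine obstacle beyond carefully tracking this $I^1$-versus-$S^1$ distinction.
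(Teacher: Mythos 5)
Your proof is correct and takes essentially the same approach as the paper's: the paper likewise adjoins the identity to a finite generating set for $\w_I^r$ (pairing it with an element $u$ of that set rather than an arbitrary $e\in I$, an immaterial difference) and sandwiches a $U$-sequence from $ua$ to $ub$ inside $I$ between the two steps $a\to ua$ and $ub\to b$. Your explicit handling of the $I^1$-versus-$S^1$ multiplier issue is a point the paper passes over silently, and it is resolved correctly.
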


\begin{proof}
This result essentially follows from the proof of \cite[Lemma 2.11]{Dandan}.  We provide a proof here for completeness.

Let $U\subseteq I$ be a finite generating set for $\w_I^r$ such that $D_r(U,I)=D_r(I)$.  Choose any $u\in U.$  For any $a,b\in S,$ since $ua,ub\in I,$ there exists a $U$-sequence
$$ua=u_1s_1,\ v_1s_1=u_2s_2,\ \dots,\ v_ks_k=ub$$
in $I,$ where $k\leq D_r(I).$  Thus, letting $V=U\cup\{1\},$ we have a $V$-sequence
$$a=1a,\ ua=u_1s_1,\ v_1s_1=u_2s_2,\ \dots,\ v_ks_k=ub,\ 1b=b$$
from $a$ to $b$ of length $k+2\leq D_r(I)+2.$  We conclude that $\w_S^r$ is generated by $V,$ and $D_r(S)\leq D_r(V,S)\leq D_r(I)+2.$
\end{proof}

\begin{cor}\label{cor:zero}
If $S$ is a monoid with a left zero, then $D_r(S)\leq 2.$
\end{cor}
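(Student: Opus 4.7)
My plan is to reduce the corollary directly to Lemma \ref{lem:rightideal}. Let $z \in S$ be a left zero, so $zs = z$ for every $s \in S$. The singleton $I = \{z\}$ is then a right ideal of $S$, and as a one-element semigroup its universal right congruence is the identity relation, hence trivially finitely generated with $D_r(I) = 0$. Lemma \ref{lem:rightideal} then immediately yields $D_r(S) \leq D_r(I) + 2 = 2$.

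Since Lemma \ref{lem:rightideal} does the heavy lifting, I would probably prefer to make the construction explicit, which also sidesteps any concern about whether the proof of Lemma \ref{lem:rightideal} formally handles the degenerate case where the generating set of $\w_I^r$ is empty. For this, I would take $U = \{1, z\} \subseteq S$ (equivalently, the generating set $U \times U$ for $\w_S^r$) and, for arbitrary $a, b \in S$, exhibit the length-$2$ derivation
$$a = 1 \cdot a, \qquad z \cdot a = z \cdot b, \qquad 1 \cdot b = b,$$
where the middle equality holds because $z$ is a left zero (both sides equal $z$), and the first and last steps pair $1$ with $z$ (in opposite orders). This simultaneously shows that $\w_S^r = \langle U \times U \rangle$ and $D_r(U, S) \leq 2$, giving $D_r(S) \leq 2$.

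There is essentially no obstacle here: the left-zero property collapses the middle segment of the derivation to a single equation, and adjoining the monoid identity furnishes the opening and closing steps. The only thing worth verifying carefully is that this interpretation of ``generating set'' matches the conventions set up before the corollary (which it does, by the abuse-of-notation paragraph following Lemma \ref{lem:w}).
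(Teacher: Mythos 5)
Your proposal is correct and matches the paper's intended argument: the corollary is stated as an immediate consequence of Lemma \ref{lem:rightideal} applied to the right ideal $I=\{z\}$, which has $D_r(I)=0$. Your explicit length-$2$ derivation $a=1\cdot a,\ z\cdot a=z\cdot b,\ 1\cdot b=b$ is a valid and welcome verification of the same bound, but it is not a different method.
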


Green's relations $\el$, $\ar$, $\eh$, $\dee$ and $\jay$ are standard tools for describing the ideal structure of a semigroup.  The relation $\el$ on $S$ is given by $(a,b)\in\el$ if and only if $S^1a=S^1b$, i.e.\ if $a$ and $b$ generate the same principal left ideal.  The relations $\ar$ and $\jay$ are defined analogously in terms of principal right ideals and principal two-sided ideals, respectively.  Finally, we have $\eh=\el\cap\ar$ and $\dee=\el\vee\ar~(=\el\circ\ar=\ar\circ\el).$  

We call $S$ {\em left/right simple} if it has a single $\el/\ar$-class, and {\em simple} if it has a single $\jay$-class.   There is a natural partial order on the set of $\jay$-classes of $S,$ given by $J_a\leq J_b$ if and only $S^1aS^1\subseteq S^1bS^1$.  There is at most one minimal $\jay$-class under this ordering; if it exists, it is called the {\em minimal ideal} of $S,$ and is a simple subsemigroup of $S.$  

The equivalence relation $\el^{\ast}$ on $S$ is defined by the rule that $(a,b)\in\el^{\ast}$ if and only $a,b$ are $\el$-related in some oversemigroup $T$, i.e.\ $aT^1=bT^1$.  We say that $S$ is {\em $\el^{\ast}$-simple} if it has a single $\el^{\ast}$-class.  We dually define the relation $\ar^{\ast}$ and the notion of being $\ar^{\ast}$-simple.  By \cite[Proposition 3.4]{Gould}, an $\el^{\ast}$-simple semigroup has finite right diameter if and only if it is finite.  We provide a proof of this result here using a more general argument, which also shows that, for any $\el^{\ast}$-simple semigroup, being countable is necessary for the universal right congruence to be finitely generated.  

\begin{prop}\label{prop:L*-simple}
Let $S$ be an $\el^{\ast}$-simple semigroup.  If $\w_S^r$ is finitely generated, then $S$ is countable.  Moreover, $S$ has finite right diameter if and only if it is finite.
\end{prop}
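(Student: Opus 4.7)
The plan is to exhibit $S$ as a countable union of finite balls in a suitable metric, and then read the ``finite $\iff$ finite right diameter'' statement off the same bound. Since $\w_S^r$ is finitely generated, Lemma~\ref{lem:w} yields a finite subset $V \sub S$ with $\w_S^r = \l V \times V\r$; set $U = V \times V$. Fix any base point $a_0 \in S$ and for each $n \geq 0$ put
\[
B_n = \{b \in S : d_U^r(a_0, b) \leq n\}.
\]
Because $\w_S^r = \l U\r$ relates $a_0$ to every element of $S$, we have $S = \bigcup_{n \geq 0} B_n$, so it suffices to prove that each $B_n$ is finite, with a bound depending only on $|V|$ and $n$.

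The crux is the following unique-continuation property for $U$-sequences: for any $e \in S$ and any $(u,v) \in V \times V$, the set $\{vs : s \in S^1,\ us = e\}$ has at most one element. To establish this I would unpack the $\el^*$-hypothesis via its standard characterization: $(u,v) \in \el^*$, equivalently $uT^1 = vT^1$ in some oversemigroup $T$, is equivalent to the cancellation statement that for all $s, s' \in S^1$,
\[
us = us' \iff vs = vs'.
\]
Since $S$ is $\el^*$-simple this holds for every $u, v \in V$, so any two $s, s'$ with $us = us' = e$ produce $vs = vs'$, giving uniqueness.

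From unique continuation it follows that at each step of a $U$-sequence the next element is determined by the current element together with the choice of pair in $V \times V$, so each element has at most $|V|^2$ one-step continuations. An easy induction then yields a finite bound on $|B_n|$ (for instance $|B_n| \leq (|V|^2 + 1)^n$), and hence $S = \bigcup_n B_n$ is a countable union of finite sets, so countable. For the second assertion: if $D_r(S) = D < \infty$, choose $V$ realising the diameter, so $S = B_D$ is finite by the same bound; the converse is immediate since any finite semigroup has $\w_S^r$ finitely generated and has finite right diameter.

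The one step requiring essential use of the hypotheses is the unique-continuation property, where $\el^*$-simplicity enters; translating the oversemigroup definition of $\el^*$ into the pointwise cancellation law $us = us' \iff vs = vs'$ is the only subtle point, and everything else is routine bookkeeping with $U$-sequences.
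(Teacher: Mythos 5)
Your proof is correct, and it takes a genuinely different route from the paper's. The paper invokes Pastijn's theorem to place $S$ inside a single $\el$-class of an oversemigroup $T$, chooses witnesses $\a(u,v)\in T$ with $u=\a(u,v)v$ for the finitely many generating pairs, and unfolds a $U$-sequence from an arbitrary $a$ to a fixed $b$ to write $a=\a_1\cdots\a_k b$ with $k\leq D_r(S)$; thus $S$ sits inside an explicitly parametrised countable (resp.\ finite) set of products in $T$. You instead stay inside $S$: from the oversemigroup definition of $\el^{\ast}$ you extract the pointwise cancellation law $us=us'\iff vs=vs'$ for $s,s'\in S^1$ (only the easy direction of the standard equivalence is needed, and it does hold: writing $u=tv$ and $v=t'u$ with $t,t'\in T^1$ gives both implications), deduce that each step of a $U$-sequence is determined by the current element together with the chosen pair from $V\times V$, and conclude that the balls $B_n$ about a base point are finite, so that $S=\bigcup_n B_n$ is countable and equals the finite ball $B_{D_r(V,S)}$ when a finite set $V$ realising the diameter is chosen (such a $V$ exists by Lemma~\ref{lem:w}). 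The two arguments rest on the same counting principle---an element is determined by the sequence of generating pairs together with one endpoint---but yours runs the determinism forwards from a base point rather than backwards through oversemigroup multipliers, needs $T$ only for the one-line derivation of the cancellation law, and produces explicit bounds on ball sizes; the paper's version is shorter once $T$ is in hand. Both adapt immediately to the countably-generated refinement mentioned in the remark following the proposition (yours with countable rather than finite balls).
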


\begin{proof}
Since $S$ is $\el^{\ast}$-simple, by \cite[Theorem 1]{Pastijn:1975} there exists an oversemigroup $T$ of $S$ such that $S$ is contained in a single $\el$-class of $T.$
(One can take $T$ to be the dual of the full transformation monoid on $S^1,$ in which maps are composed from right to left.)

Now, suppose that $\w_S^r$ is finitely generated,
and let $U\subseteq S$ be a finite generating set for $\w_S^r$ such that $D_r(S,U)=D_r(S).$  For each pair $u, v\in U,$ since $u$ and $v$ are $\el$-related in $T$ we can choose $\a(u,v)\in T$ such that $u=\a(u,v)v.$  Fix $b\in S.$  Let
$$V=\{\a(u_1,v_1)\dots\a(u_k,v_k)b : u_i, v_i\in U, k\leq D_r(S)\}\subseteq T.$$
Clearly $V$ is countable, and if $D_r(S)$ is finite then so is $V.$  We claim that $S\subseteq V$; then $S$ is countable, and it is finite if it has finite right diameter.  So, let $a\in S.$  Then there exists a $U$-sequence
$$a=u_1s_1,\ v_1s_1=u_2s_2,\ \dots,\ v_ks_k=b$$
where $k\leq D_r(S).$
Letting $\a_i=\a(u_i,v_i),$ we have 
$$a=u_1s_1=\a_1v_1s_1=\a_1u_2s_2=\a_1\a_2v_2s_2=\cdots
=\a_1\dots\a_kv_ks_k=\a_1\dots\a_kb\in V,$$
as required.  
Clearly, if $S$ is finite then it has finite right diameter.
\end{proof}

\begin{rem}
Combining Proposition \ref{prop:L*-simple} and \cite[Proposition 2.7]{Dandan}, it follows that any finitely generated infinite group $G$ has the property that $\w_G^r$ is finitely generated, but $D_r(G)$ is infinite.
Also, a slight modification of the proof of Proposition \ref{prop:L*-simple} shows that an $\el^{\ast}$-simple semigroup $S$ is countable if and only if $\w_S^r$ is countably generated (where {\em countably generated} means being generated by a countable set).
\end{rem}

The above definitions and results have obvious left-right duals, and we use analagous nomenclature and notation: left ideal, $S^l$, $\w_S^l$, left diameter, etc.

\subsection{Semigroups of transformations and relations}\label{subsec:trans}

In this subsection we introduce the transformation semigroups of concern in this article.  First, we recall some basic terminology regarding relations and mappings.

Throughout the paper $X$ will stand for an arbitrary  infinite set.

A ({\em binary}) {\em relation} on $X$ is a subset of $X\times X$.  We denote the identity relation $\{(x,x) : x\in X\}$ by $1_X$.  For a relation $\a$ on $X$ and a subset $Y\subseteq X,$ we define $$Y\a=\{x\in X : (y,x)\in\a\text{ for some }y\in Y\},$$ 
and we abbreviate $\{y\}\a$ to $y\a.$  The {\em domain} and {\em image} of $\a$ are, respectively,
$$\dom\a=\{x\in X : (x,y)\in\a\text{ for some }y\in X\}\quad\text{and}\quad\im\a=X\a,$$
and the {\em inverse} of $\a$ is the relation
$$\a^{-1}=\{(y,x) : (x,y)\in\a\}.$$
The {\em composition} of two relations $\a$ and $\b$ on $X$ is the relation
$$\a\b=\{(x,y) : \exists z\in X\text{ such that }(x,z)\in\a\text{ and }(z,y)\in\b\}.$$
A {\em partial transformation} on $X$ is a relation $\a$ on $X$ satisfying the condition 
$$(x,y),(x,z)\in\a\Rightarrow y=z.$$
Let $\a$ be a partial transformation on $X.$  
For each $x\in\dom\a$, we interpret $x\a$ as an element of $X$ (rather than a singleton subset of $X$).  Note that for $Y\subseteq X$ we have $Y\a^{-1}=\{x\in X : x\a\in Y\}.$  The {\em kernel} of $\a$ is 
$$\ker\a=\{(x,y)\in\dom\a\times\dom\a : x\a=y\a\}.$$
Observe that $\a\a^{-1}=\ker\a$ and $\a^{-1}\a=\{(x,x) : x\in\im\a\}.$  It follows that
$$\a\a^{-1}=1_X\,\Leftrightarrow\,[\dom\a=X\text{ and }\a\text{ is injective}]\quad\text{and}\quad\a^{-1}\a=1_X\,\Leftrightarrow\,\a\text{ is surjective}.$$
We now define the semigroups of transformations and relations that will be considered in this paper, with some relevant additional information.
~\vspace{0.7em}

\noindent
{\bf \boldmath{$\B_X$}:}
the monoid of all binary relations on $X$ under composition, with identity~$1_X$.
\vspace{1mm}

\noindent
{\bf \boldmath{$\PT_X$} -- the partial transformation monoid on \boldmath{$X$}:}
the submonoid of $\B_X$ consisting of all partial transformations on $X.$
\vspace{1mm}

\noindent
{\bf \boldmath{$\I_X$} -- the symmetric inverse monoid on \boldmath{$X$}:}
the submonoid of $\PT_X$ consisting of all injective partial transformations (also known as {\em partial bijections}).
\vspace{1mm}

\noindent
{\bf \boldmath{$\T_X$} -- the full transformation monoid on \boldmath{$X$}:}
the submonoid of $\PT_X$ consisting of all (full) transformations on $X,$ i.e.\
$\T_X=\{\alpha\in\PT_X\::\: \dom\alpha=X\}$.
\vspace{1mm}

\noindent
{\bf \boldmath{$\Sym_X$} -- the symmetric group on \boldmath{$X$}:}
the subgroup of $\T_X$ consisting of all bijections.
\vspace{1mm}

\noindent
{\bf \boldmath{$\F_X$}:}
the submonoid of $\T_X$ consisting of all finite-to-one mappings, i.e.\
$\F_X=\{\alpha\in\T_X\::\: |x\alpha^{-1}|<\infty \text{ for all } x\in X\}$.
\vspace{1mm}

\noindent
{\bf \boldmath{$\Inj_X$}:}
the submonoid of $\T_X$ consisting of all injective mappings.
\begin{itemize}[leftmargin=6mm, label=-]
\item $\Inj_X$ is right cancellative (that is, $\b\a=\g\a$ implies that $\b=\g$), and hence $1_X$ is its only idempotent.
\end{itemize}
\vspace{1mm}

\noindent
{\bf \boldmath{$\BL_{X,q}$} -- the Baer-Levi semigroup of type \boldmath{$q$} on \boldmath{$X$}:}
for an infinite cardinal $q\leq |X|$, it is the subsemigroup of $\Inj_X$ defined by
\[
\BL_{X,q}=\{\a\in\Inj_X : |X\!\setminus\!\im\a|=q\}.
\]
\begin{itemize}[leftmargin=6mm, label=-]
\item Each $\BL_{X,q}$ is right cancellative, right simple and has no idempotents \cite[Theorem~8.2]{Clifford:1967}.  
\end{itemize}
\vspace{1mm}

\noindent
{\bf \boldmath{$\BL_{X}$} -- the Baer-Levi semigroup on \boldmath{$X$}:}
$\BL_{X,q}$ for $q=|X|$.
\begin{itemize}[leftmargin=6mm, label=-]
\item For any $\a,\b\in\Inj_X$, we have $\a\in\b(\Inj_X)$ if and only if $\a\in(\Inj_X)\b(\Inj_X)$ if and only if $|X\!\setminus\!\im\a|\geq|X\!\setminus\!\im\b|.$  Thus, the $\jay(=\!\ar)$-classes of $\Inj_X$ form a chain
$$\qquad\qquad\Sym_X>J_1>J_2>\cdots>J_n>\cdots>\BL_{X,\aleph_0}>\BL_{X,\aleph_1}>\cdots>\BL_{X,|X|}=\BL_X,$$
where $J_n=\{\a\in\Inj_X : |X\!\setminus\!\im\a|=n\}$ ($n\in\N$), and $\BL_X$ is the minimal ideal of $\Inj_X$ \cite[Proposition 2.2, Theorem 2.3, Remark 2.4]{Inj}.
\end{itemize}
\vspace{1mm}

\noindent
{\bf \boldmath{$\BL_{X}^1$}:} the Baer-Levi semigroup with an identity adjoined.
\vspace{1mm}

\noindent
{\bf \boldmath{$\Sym_X\cup \BL_{X}$}:} the submonoid of $\T_X$ consisting of all bijections and all Baer-Levi elements.
\begin{itemize}[leftmargin=6mm, label=-]
\item For any subgroup $G$ of $\Sym_X$, the set $G\cup\BL_X$ is a submonoid of $\Inj_X$; see \cite[Exercise 8.1.10]{Clifford:1967} for more information about such monoids.
\end{itemize}
\vspace{1mm}

\noindent
{\bf \boldmath{$\Surj_X$}:} the submonoid of $\T_X$ consisting of all surjective mappings.
\begin{itemize}[leftmargin=6mm, label=-]
\item $\Surj_X$ is left cancellative, and hence $1_X$ is its only idempotent. 
\end{itemize}
\vspace{1mm}

\noindent
{\bf \boldmath{$\DBL_{X,q}$} -- the dual Baer-Levi semigroup of type \boldmath{$q$} on \boldmath{$X$}:}
for an infinite cardinal $q\leq |X|$, it is the subsemigroup of $\Surj_X$ defined by
\[
\DBL_{X,q}=\{\a\in\Surj_X : |x\alpha^{-1}|=q\text{ for all } x\in X\}.
\]
\begin{itemize}[leftmargin=6mm, label=-]
\item Each $\DBL_{X,q}$ is left cancellative, left simple and has no idempotents  \cite[Theorem 3]{Chen}.  
\end{itemize}
\vspace{1mm}

\noindent
{\bf \boldmath{$\DBL_{X}$} -- the dual Baer-Levi semigroup on \boldmath{$X$}:}
$\DBL_{X,q}$ for $q=|X|$.
\begin{itemize}[leftmargin=6mm, label=-]
\item 
$\DBL_X$ is the minimal ideal of $\Surj_X$ \cite[Theorem 3.2]{Surj}.
\end{itemize}
\vspace{1mm}

\noindent
{\bf \boldmath{$\DBL_{X}^1$}:} the dual Baer-Levi semigroup with an identity adjoined.
\vspace{1mm}

\noindent
{\bf \boldmath{$\Sym_X\cup \DBL_{X}$}:} the submonoid of $\T_X$ consisting of all bijections and all dual Baer-Levi elements.
\vspace{1mm}

\noindent
{\bf \boldmath{$\T_X\!\setminus\!\Inj_{X}$}:} the subsemigroup of $\T_X$ consisting of all non-injective mappings.
\vspace{1mm}

\noindent
{\bf \boldmath{$\T_X\!\setminus\!\Surj_{X}$}:} the subsemigroup of $\T_X$ consisting of all non-surjective mappings.
\vspace{1mm}

\noindent
{\bf \boldmath{$\H_X$}:}
the submonoid of $\T_X$ defined by
\[\H_X=\{\a\in\T_X : |Y\a|=|X|\text{ for all }Y\subseteq X\text{ with }|Y|=|X|\}.\]
\begin{itemize}[leftmargin=6mm, label=-]
\item $\H_X$ is {\em bisimple}, meaning that it has a single $\dee$-class.  It was introduced by Higgins in \cite{Higgins} as a means of proving that every semigroup embeds into some bisimple monoid.
\item The following are subsemigroups of $\H_X$: $\F_X$; $\Inj_X$ (and hence $\Sym_X$, $\BL_{X,q}$ where $\al\leq q\leq|X|,$ $\BL_X^1$, and $\Sym_X\cup\BL_X$); and $\DBL_{X,q}$ where $\al\leq q<|X|.$  This is clear in the case of $\Inj_X$; for the other semigroups we provide a brief explanation.  Suppose that $S$ is either $\F_X$ or $\DBL_{X,q}$ with $q<|X|$, and consider $\a\in S$ and $Y\subset X$ such that $|Y\a|<|X|.$  By definition, $|x\a^{-1}|\leq q$ for all $x\in X$ (in fact, $|x\a^{-1}|<\al$ if $S=\F_X$).  Therefore, using the fact that $Y\subseteq(Y\a)\a^{-1}$, we have 
$$\qquad\qquad|Y|\leq|(Y\a)\a^{-1}|=\Big|\bigcup_{x\in Y\a}x\a^{-1}\Big|\leq\max(|Y\a|,q)<|X|.$$
\end{itemize}

\noindent
{\bf \boldmath{$\mathcal{K}_X$}:} the submonoid of $\H_X$ defined by 
$$\mathcal{K}_X=\{\a\in\H_X : |X\!\setminus\!\im\a|<|X|\}.$$

All the semigroups in the above list are subsemigroups of $\T_X$, with the exception of $\B_X$, $\PT_X$ and $\I_X$.  All these subsemigroups of $\T_X$ have the following `transitivity' properties, which will play a key role in the paper.

\begin{defn}
Let $S$ be a subsemigroup of $\T_X.$ 
\begin{itemize}[leftmargin=*]
\item Let $\k\leq|X|$ be a cardinal.  We say that $S$ is {\em $\k$-transitive} if for any partial bijection $\lam\in\I_X$ with $|\!\dom\lam|\leq\k$ and $|X\!\setminus\!\dom\lam|=|X\!\setminus\!\im\lam|=|X|,$ there exists some $\th\in S$ extending $\lam,$ i.e.\ $\th|_{\dom\lam}=\lam.$
\item We call $S$ {\em finitely transitive} if it is $\k$-transitive for every finite cardinal $\k<\al$.
\end{itemize}
\end{defn}
~
\vspace{-0.5em}
\begin{rem} 
Let $S$ be a subsemigroup of $\T_X$, and let $\k\leq|X|.$ 
\begin{enumerate}[leftmargin=*]
\item The semigroup $S$ is $\k$-transitive if and only if it is $\mu$-transitive for every $\mu\leq\k.$
\item If $\k<|X|$, then $S$ is $\k$-transitive if and only if for any partial bijection $\lam\in\I_X$ with $|\!\dom\lam|\leq\k$ there exists some $\th\in S$ extending $\lam.$
\item If $S$ contains a $\k$-transitive (resp.\ finitely transitive) subsemigroup $T,$ then $S$ is also $\k$-transitive (resp.\ finitely transitive).
\end{enumerate}
\end{rem}

\subsection{Summary of results, and diagonal acts}\label{subsec:summary}

Our main goal in this paper is to answer the following questions for each semigroup $S$ listed in Section \ref{subsec:trans}, as well as the partition monoid $\P_X$ and the partial Brauer monoid $\PB_X$, which will be defined in Section \ref{sec:partition}.
\begin{enumerate}[leftmargin=1cm]
\item[(Q1)] Is $S$ finitely generated as a right ideal, i.e.\ is $S^r$ finitely generated?
\item[(Q2)] Is the universal right congruence on $S$ finitely generated, i.e.\ is $w_S^r$ finitely generated?
\item[(Q3)] If $w_S^r$ is finitely generated, what is the right diameter $D_r(S)$?
\item[(Q4)] Is $S^l$ finitely generated?
\item[(Q5)] Is $w_S^l$ finitely generated?
\item[(Q6)] If $w_S^l$ is finitely generated, what is the left diameter $D_l(S)$?
\end{enumerate}
Our main results are summarised in Table \ref{table:results}.

\begin{table}[h]
\centering
\begin{tabular}{|c||c|c|c||c|c|c|}
\hline
Semigroup $S$ & $S^r$ f.g.? & $\w_S^r$ f.g.? & $D_r(S)$ & $S^l$ f.g.? & $\w_S^l$ f.g.? & $D_l(S)$\\
\hline\hline
$\B_X$ & Yes & Yes & 1 & Yes & Yes & 1\\
$\PT_X$ & Yes & Yes & 1 & Yes & Yes & 1\\
$\I_X$ & Yes & Yes & 2 & Yes & Yes & 2\\
$\T_X$ & Yes & Yes & 1 & Yes & Yes & 1\\
$\Sym_X$ & Yes & No & n.a. & Yes & No & n.a.\\
$\F_X$ & Yes & Yes & 1 & Yes & No & n.a.\\
$\Inj_X$ & Yes & Yes & 4 & Yes & No & n.a.\\
$\BL_{X,q}, q<|X|$ & Yes & No & n.a. & No & No & n.a.\\
$\BL_X$ & Yes & Yes & 3 & No & No & n.a.\\
$\BL_X^1$ & Yes & Yes & 3 & Yes & No & n.a.\\
$\Sym_X\cup\BL_X$ & Yes & Yes & 4 & Yes & No & n.a.\\
$\Surj_X$ & Yes & No & n.a. & Yes & Yes & 4\\
$\DBL_{X,q}, q<|X|$ & No & No & n.a. & Yes & No & n.a.\\
$\DBL_X$ & No & No & n.a. & Yes & Yes & 2\\
$\DBL_X^1$ & Yes & No & n.a. & Yes & Yes & 3\\
$\Sym_X\cup\DBL_X$ & Yes & No & n.a. & Yes & Yes & 4\\
$\T_X\!\setminus\Inj_X$ & No & No & n.a. & Yes & Yes & 2\\
$\T_X\!\setminus\Surj_X$ & Yes & Yes & 2 & No & No & n.a.\\
$\H_X$ & Yes & Yes & 1 & Yes & No & n.a.\\
$\mathcal{K}_X$ & Yes & No & n.a. & Yes & No & n.a.\\
$\P_X$ & Yes & Yes & 1 & Yes & Yes & 1\\
$\PB_X$ & Yes & Yes & 1 & Yes & Yes & 1\\
\hline
\end{tabular}
\vspace{0.5em}
\caption{Summary of results.}
\label{table:results}
\vspace{-6mm}
\end{table}

For certain transformation semigroups $S,$ we can quickly answer questions (Q1)-(Q6) using known results regarding diagonal acts.

For a semigroup $S,$ the {\em diagonal right $S$-act} is the set $S\times S$ on which $S$ acts on the right via $(a, b)c=(ac, bc).$  It is said to be {\em generated} by a set $U\subseteq S\times S$ if $S\times S=US^1$, and it is {\em finitely generated} or {\em monogenic} if it is generated by a finite set or a singleton, respectively.  Of course, one can dually define the diagonal left $S$-act and its finite generation/monogenicity.

The importance of diagonal acts in relation to the notion of diameter is expressed in the following result.

\begin{prop}\label{prop:diagonal,diameter}\cite[Proposition 3.6]{Gould}
For a non-trivial semigroup $S,$ the diagonal right $S$-act is finitely generated if and only if $S$ has right diameter 1.
\end{prop}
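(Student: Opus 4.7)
The plan is to prove each direction of the equivalence by direct translation between the two notions: a length-$1$ $U$-sequence from $a$ to $b$ is literally an equation of the form $(a,b)=(u,v)s$ (or its reverse), which is exactly what it means for $(a,b)$ to lie in the orbit of $(u,v)$ under the diagonal action. The non-triviality of $S$ will be used only to rule out the degenerate case where the diameter is $0$ but the diagonal act is still monogenic.

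For the ``only if'' direction, I would start with a finite $U\subseteq S\times S$ generating $\w_S^r$ with $D_r(U,S)=1$, and apply Lemma \ref{lem:w} to replace $U$ by a set of the form $V\times V$ with $V\subseteq S$ finite, without increasing the diameter. The symmetry $V\times V=(V\times V)^{-1}$ means that for any $a\neq b$ a length-$1$ sequence gives an element $(u,v)\in V\times V$ and $s\in S^1$ with $(a,b)=(u,v)s$. The diagonal pairs $(a,a)$ are handled using the final sentence of Lemma \ref{lem:w}, which tells us that $S=VS^1$: writing $a=vs$ with $v\in V$, $s\in S^1$ yields $(a,a)=(v,v)s\in(V\times V)S^1$. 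Hence $S\times S=(V\times V)S^1$, so the diagonal right act is finitely generated.

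For the ``if'' direction, suppose $S\times S=US^1$ for some finite $U\subseteq S\times S$. For any $(a,b)\in S\times S$, pick $(u,v)\in U$ and $s\in S^1$ with $a=us$ and $b=vs$; this is exactly a $U$-sequence of length at most $1$ from $a$ to $b$, and it witnesses $(a,b)\in\langle U\rangle$. Thus $\w_S^r=\langle U\rangle$ and $D_r(U,S)\le 1$, so $D_r(S)\le 1$. Non-triviality of $S$ gives at least one pair $a\neq b$, forcing $D_r(S)\ge 1$, and therefore $D_r(S)=1$.

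There is no real obstacle here: both directions amount to unpacking definitions. The only subtlety worth flagging is the role of non-triviality (without it the trivial semigroup has monogenic diagonal act but diameter $0$) and the need to use Lemma \ref{lem:w} so that one can symmetrise the generating set and simultaneously pick up the diagonal pairs $(a,a)$ through the generation of $S^r$.
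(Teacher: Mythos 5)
Your proof is correct. The paper itself does not prove this statement -- it simply cites \cite[Proposition 3.6]{Gould} -- but your argument is the natural definition-unpacking one that the result calls for: a length-$1$ $U$-sequence is precisely a witness $(a,b)=(u,v)s$ for membership in $US^1$, and you correctly handle the two genuine subtleties, namely symmetrising the generating set via Lemma \ref{lem:w} and covering the diagonal pairs $(a,a)$ using $S=VS^1$, as well as using non-triviality to pin the diameter at exactly $1$ rather than $0$.
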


From the substantial body of results on generation of diagonal acts \cite{Gallagher:2006,Gallagher:2005,Gallagher:thesis}, the main findings concerning natural semigroups of transformations and relations are summarised in Table \ref{table:da}.

\begin{table}[h]
\centering
\begin{tabular}{|c||c|c|}
\hline
Semigroup & Diagonal right act & Diagonal left act\\
\hline\hline
$\B_X$ & Monogenic & Monogenic\\
$\PT_X$ & Monogenic & Monogenic\\
$\I_X$ & Not f.g. & Not f.g.\\
$\T_X$ & Monogenic & Monogenic\\
$\Sym_X$ & Not f.g. & Not f.g.\\
$\F_X$ & Monogenic & Not f.g.\\
Infinite subsemigroup of $\Inj_X$ & Not f.g. & Not f.g.\\
Infinite subsemigroup of $\Surj_X$ & Not f.g. & Not f.g.\\
$\T_X\!\setminus\Inj_X$ & Not f.g. & Not f.g.\\
$\T_X\!\setminus\Surj_X$ & Not f.g. & Not f.g.\\
\hline
\end{tabular}
\vspace{0.5em}
\caption{Generation of diagonal acts of certain transformation semigroups.  See \cite[Theorem 7.1 and Lemma 7.2]{Gallagher:2006} for the results on infinite subsemigroups of $\Surj_X$ and $\Inj_X$, and \cite[Table 1.2]{Gallagher:thesis} for the remaining semigroups.}
\label{table:da}
\vspace{-1em}
\end{table}

We immediately deduce from Table \ref{table:da} and Proposition \ref{prop:diagonal,diameter} that $\B_X$, $\PT_X$ and $\T_X$ each have both right diameter 1 and left diameter 1, that $\F_X$ has right diameter 1 but not left diameter 1, and the remaining semigroups appearing in Table \ref{table:da} have neither right diameter 1 nor left diameter 1.  Since $\I_X$ has a zero (the empty map), we deduce, using Corollary \ref{cor:zero} and its left-right dual, that $\I_X$ has right diameter 2 and left diameter 2.

\section{Transformation Semigroups: Right Diameter}
\label{sec:right}

This section naturally divides into three parts, corresponding to questions (Q1), (Q2) and (Q3) of Section \ref{subsec:summary}.  Specifically, we first determine for which of the transformation semigroups $S$ in Table \ref{table:results} we have $S^r$ is {\em not} finitely generated (and hence $\w_S^r$ is not finitely generated).  We then find a number of semigroups $S$ with $S^r$ finitely generated but $\w_S^r$ not finitely generated.  Finally, for each of the remaining semigroups $S,$ we prove that $\w_S^r$ {\em is} finitely generated and determine the right diameter of $S$ (which turns out to be finite).

Now, it is certainly the case that $S^r$ is finitely generated if $S$ is a monoid or a right simple semigroup.  Moreover, it is straightforward to show that $\T_X\!\setminus\!\Surj_X$ is generated as a right ideal of itself by any $\a\in\Inj_X\!\setminus\!\Sym_X$.  So, we are left to consider only $\T_X\!\setminus\!\Inj_X$ and $\DBL_{X,q}$ ($\al\leq q\leq|X|$).  It turns out that these are not finitely generated as right ideals of themselves.  In fact, we prove a stronger result:

\begin{thm}\label{thm:rnotfg}
If $S$ is a finitely transitive subsemigroup of $\T_X\!\setminus\!\Inj_X$, then $S^r$ is not finitely generated.  In particular, the semigroups $\T_X\!\setminus\!\Inj_X$ and $\DBL_{X,q}$ \emph{(}$\al\leq q\leq|X|$\emph{)} are not finitely generated as right ideals of themselves.
\end{thm}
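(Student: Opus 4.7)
The plan is to argue by contradiction: suppose that $S^r$ is finitely generated, so that $S = US^1$ for some finite $U\subseteq S$. Since $S\subseteq\T_X\setminus\Inj_X$, every $u\in U$ is non-injective, so for each such $u$ I can select a pair of distinct elements $x_u,y_u\in X$ with $(x_u,y_u)\in\ker u$. Let $Y=\{x_u,y_u : u\in U\}$, which is a finite subset of $X$. The key observation driving the proof is that $\ker u \subseteq \ker(us)$ for every $s\in S^1$, so an element $a\in uS^1$ must satisfy $(x_u,y_u)\in\ker a$.

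Now I exploit finite transitivity: the partial bijection $1_Y\in\I_X$ has finite domain, so there exists $\theta\in S$ extending $1_Y$, i.e.\ fixing $Y$ pointwise. In particular $\theta$ is injective on $Y$, so $(x_u,y_u)\notin\ker\theta$ for every $u\in U$. But by assumption $\theta = us$ for some $u\in U$, $s\in S^1$, and then $\ker u\subseteq\ker\theta$ forces $(x_u,y_u)\in\ker\theta$, a contradiction. Hence $S^r$ cannot be finitely generated.

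For the ``in particular'' clause, it remains only to verify that $\T_X\setminus\Inj_X$ and $\DBL_{X,q}$ (for $\aleph_0\leq q\leq |X|$) are finitely transitive. For $\T_X\setminus\Inj_X$, given a partial bijection $\lambda$ with finite domain, extend $\lambda$ to a full transformation by sending $X\setminus\dom\lambda$ (which has cardinality $|X|$) in a deliberately non-injective way, e.g.\ by collapsing two of its elements to a common value; the result lies in $\T_X\setminus\Inj_X$. For $\DBL_{X,q}$, given a partial bijection $\lambda$ with finite domain, a straightforward cardinality bookkeeping allows one to extend $\lambda$ to a surjection on $X$ with every fibre of size exactly $q$: each element of $\im\lambda$ already has one preimage from $\dom\lambda$ and needs $q$ more, while each element outside $\im\lambda$ needs $q$ preimages, and the complement $X\setminus\dom\lambda$ of size $|X|$ is large enough to realise this partition into $|X|$ blocks of size $q$.

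The main conceptual obstacle is simply isolating the right invariant: the inclusion $\ker u\subseteq\ker(us)$ is what makes the kernel a useful obstruction to generation from the left, and the trick of extending $1_Y$ via finite transitivity is what produces an element whose kernel avoids each of the finitely many chosen witness pairs. Once this is seen, the verification of finite transitivity for the two named examples is an elementary construction.
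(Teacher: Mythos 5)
Your argument is correct and is essentially the paper's own proof: choose a non-injective witness pair $(x_u,y_u)$ in $\ker u$ for each generator $u$, use finite transitivity to produce $\theta\in S$ fixing all these points, and observe that $\ker u\subseteq\ker(us)$ rules out $\theta\in uS^1$. Your explicit verification that $\T_X\setminus\Inj_X$ and $\DBL_{X,q}$ are finitely transitive is a correct (and welcome) addition that the paper leaves implicit.
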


\begin{proof}
Consider any finite subset $U\subseteq S.$  For each $\a\in U,$ choose $(x_\a,y_\a)\in\ker\a$ with $x_\a\neq y_\a$ (such a pair exists because $\a$ is not injective).  Since $S$ is finitely transitive, there exists $\th\in S$ such that $x_\a\th=x_\a$ and $y_\a\th=y_\a$ for all $\a\in U.$  
Thus, for all $\a\in U$ we have $(x_\a,y_\a)\notin\ker\th$ and hence $\theta\notin\a S^1$, so $S\neq US^1$.
Hence, $S^r$ is not finitely generated.
\end{proof}

We now move on to find certain transformation semigroups $S$ for which $S^r$ is finitely generated but $\w_S^r$ is {\em not} finitely generated.  To this end, we first establish a general result regarding generation of the universal right congruence on a subsemigroup of $\T_X$.

Let $S$ be a subsemigroup of $\T_X$.  For a subset $U\subseteq S,$ we define
$$\Sig(U)=\{\a_1\b_1^{-1}\dots\a_k\b_k^{-1} : k\in\N,\,\a_i,\b_i\in U\,(1\leq i\leq k)\}\subseteq\B_X\!.$$
Observe that for any $\th,\phi\in\T_X$, in $\B_X$ we have 
$$\th\phi^{-1}=\{(x,y)\in X\times X : x\th=y\phi\}.$$

\begin{prop}\label{prop:w^r}
Let $S$ be a subsemigroup of $\T_X$, and let $U\subseteq S$ be a generating set for the universal right congruence $\w_S^r$.  Then for any $\th,\phi\in S$ there exists $\s\in\Sig(U)$ with $\s\subseteq\th\phi^{-1}$.
\end{prop}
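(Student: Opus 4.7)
The plan is to appeal to Lemma \ref{lem:sequence} to obtain a $U$-sequence from $\th$ to $\phi$ and then to build $\s$ by reading off the indices of that sequence. The key observation is that, for $\a,\b\in\T_X$, the relational product $\a\b^{-1}\in\B_X$ consists of exactly those pairs $(x,y)$ with $x\a=y\b$; iterating, a longer product $u_1v_1^{-1}\cdots u_nv_n^{-1}$ relates $x_0$ to $x_n$ precisely when there exist intermediate points $x_1,\dots,x_{n-1}\in X$ satisfying $x_{i-1}u_i=x_iv_i$ for every $i$. Thus $\Sig(U)$ is tailored to mirror $U$-sequences in the language of $\B_X$.

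For the main case $\th\neq\phi$, I would invoke Lemma \ref{lem:sequence} to obtain $u_i,v_i\in U$ and $s_i\in S^1$ with
$$\th=u_1s_1,\ v_1s_1=u_2s_2,\ \dots,\ v_ns_n=\phi;$$
the two orientations allowed by the lemma cause no difficulty since our generating relation $U\times U$ is symmetric. I would then set $\s:=u_1v_1^{-1}u_2v_2^{-1}\cdots u_nv_n^{-1}\in\Sig(U)$, pick an arbitrary $(x_0,x_n)\in\s$, expand it to a chain $x_1,\dots,x_{n-1}\in X$ with $x_{i-1}u_i=x_iv_i$ for each $i$, and then telescope via associativity, the chain identities and the sequence identities $v_is_i=u_{i+1}s_{i+1}$:
$$x_0\th=(x_0u_1)s_1=(x_1v_1)s_1=x_1(v_1s_1)=x_1(u_2s_2)=(x_1u_2)s_2=\cdots=(x_nv_n)s_n=x_n\phi,$$
which is exactly $(x_0,x_n)\in\th\phi^{-1}$.

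The trivial case $\th=\phi$ is handled separately: Lemma \ref{lem:w} supplies $S=US^1$, so writing $\th=us$ with $u\in U$ and $s\in S^1$ and taking $\s:=uu^{-1}\in\Sig(U)$ gives, for any $(x,y)\in\s$, the equalities $xu=yu$ and hence $x\th=(xu)s=(yu)s=y\phi$. I do not foresee any serious obstacle: the construction of $\s$ is essentially forced by the definition of $\Sig(U)$, and the verification is a bookkeeping exercise in translating the $U$-sequence equalities through a relational product. The only care required is to ensure the telescoping is written unambiguously in the boundary cases $n=1$ and $s_i=1\in S^1$, both of which are absorbed by associativity.
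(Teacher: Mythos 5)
Your proposal is correct and follows essentially the same route as the paper's proof: the case $\th=\phi$ is handled via $S=US^1$ and $\s=uu^{-1}\subseteq\ker\th$, and the case $\th\neq\phi$ by converting a $U$-sequence into the relational product $u_1v_1^{-1}\cdots u_nv_n^{-1}$ and telescoping along a witnessing chain. No gaps.
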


\begin{proof}
Let $\th,\phi\in S.$  Suppose first that $\th=\phi.$  Since $\w_S^r$ is generated by $U,$ there exists $\a\in U$ such that $\th\in\a S^1.$  Then $\a\a^{-1}\in\Sig(U)$ and
$$\a\a^{-1}=\ker\a\subseteq\ker\th=\th\th^{-1}=\th\phi^{-1}.$$
Now suppose that $\th\neq\phi.$  Then there exists a $U$-sequence
$$\th=\a_1\g_1,\,\b_1\g_1=\a_2\g_2,\,\dots,\,\b_k\g_k=\phi.$$
Let $\s=\a_1\b_1^{-1}\dots\a_k\b_k^{-1}\in\Sig(U).$  We claim that $\s\subseteq\th\phi^{-1}.$  So, let $(x,y)\in\s.$  Then there exist $u_1,v_1,\dots, u_{k-1}, v_{k-1},u_k\in X$ such that
$$(x,u_1)\in\a_1,\,(u_1,v_1)\in\b_1^{-1},\,(v_1,u_2)\in\a_2,\,\dots,\,(v_{k-1},u_k)\in\a_k,\,(u_k,y)\in\b_k^{-1}.$$
Therefore, we have
$$x\a_1=u_1=v_1\b_1,\,v_1\a_2=u_2=v_2\b_2,\,\dots,\,v_{k-1}\a_k=u_k=y\b_k.$$
We then have
$$x\th=x\a_1\g_1=v_1\b_1\g_1
=v_1\a_2\g_2=v_2\b_2\g_2=\cdots=v_{k-1}\a_k\g_k
=y\b_k\g_k=y\phi.$$
Thus $(x,y)\in\th\phi^{-1}$, as required.
\end{proof}

For the next result, recall that the monoid $\mathcal{K}_X$, defined in Section \ref{subsec:trans}, is a submonoid of $\H_X$, and observe that $\mathcal{K}_X\cap\Inj_X=\Inj_X\!\setminus\!\BL_X$.  Note that $\Inj_X\!\setminus\!\BL_X$ contains the symmetric group $\Sym_X$ and the Baer-Levi semigroups $\BL_{X,q}$ where $\al\leq q<|X|$.

\begin{thm}\label{thm:BL,w^rnotfg}
If $S$ is an $\al$-transitive subsemigroup of $\mathcal{K}_X$, then $\w_S^r$ is not finitely generated.  In particular, the universal right congruence is not finitely generated for $\mathcal{K}_X$ or for any $\al$-transitive subsemigroup of $\Inj_X\!\setminus\!\BL_X$ (which includes $\Sym_X$ and $\BL_{X,q}$ where $\al\leq q<|X|$).
\end{thm}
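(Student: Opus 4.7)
My plan is to argue by contradiction via Proposition~\ref{prop:w^r}. Assuming $\w_S^r$ is finitely generated, Lemma~\ref{lem:w} furnishes a finite $V\subseteq S$ with $V\times V$ generating $\w_S^r$, so $\Sig(V)$ is countable; the aim is then to exhibit $\th,\phi\in S$ for which no $\s\in\Sig(V)$ satisfies $\s\subseteq\th\phi^{-1}$.

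The first step is a structural analysis of the relations in $\Sig(V)$. For $\a,\b\in\mathcal{K}_X\subseteq\H_X$, the hypothesis $|X\setminus\im\b|<|X|$ together with the contrapositive of the $\H_X$ property for $\a$ (namely $|Y\a|<|X|\Rightarrow|Y|<|X|$) gives $|X\setminus\dom(\a\b^{-1})|=|(X\setminus\im\b)\a^{-1}|<|X|$; symmetrically $|X\setminus\im(\a\b^{-1})|<|X|$, and applying the $\H_X$ property to $\b$ with $Y=z\b^{-1}$ forces every fibre of $\b$ to have size $<|X|$. Induction on word length then shows that each $\s=\a_1\b_1^{-1}\cdots\a_k\b_k^{-1}\in\Sig(V)$ satisfies $|X\setminus\dom\s|<|X|$, $|X\setminus\im\s|<|X|$, and hence $|\s|=|X|$, together with $|x\s|,|y\s^{-1}|<|X|$ for all $x,y\in X$. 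From these bounds I read off the key \emph{good-pairs lemma}: for any finite $F,F'\subseteq X$ and any $\s\in\Sig(V)$, the set $\s\cap((X\setminus F)\times(X\setminus F'))$ is nonempty, since its complement in $\s$ has cardinality at most $\sum_{x\in F}|x\s|+\sum_{y\in F'}|y\s^{-1}|<|X|=|\s|$.

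Next I construct $\th,\phi$ as $\al$-transitive extensions of partial bijections $\lam_\th,\lam_\phi\in\I_X$. Fix disjoint subsets $A,B\subseteq X$ of cardinality $\al$ with $|X\setminus(A\cup B)|=|X|$, so that $\im\lam_\th\subseteq A$ and $\im\lam_\phi\subseteq B$ will automatically force $x\th\neq y\phi$ for every $x\in\dom\lam_\th$ and $y\in\dom\lam_\phi$. Enumerate $\Sig(V)=\{\s_1,\s_2,\ldots\}$ and fix an enumeration of a countably infinite subset of $X\setminus(A\cup B)$ from which to draw elements to ``forbid'' from the domains. I then run a priority construction: at stage $n$ the good-pairs lemma lets me select $(x_n,y_n)\in\s_n$ avoiding the current finite forbidden sets $F_\th^{(n-1)},F_\phi^{(n-1)}$, and I extend $\lam_\th$ at $x_n$ by a fresh element of $A$ (unless $x_n$ is already in $\dom\lam_\th^{(n-1)}$, in which case leave it) and $\lam_\phi$ at $y_n$ by a fresh element of $B$ similarly; at the same stage I enlarge each forbidden set by the next unused element from the fixed enumeration that lies outside the current domain. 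After $\al$ stages, $\lam_\th,\lam_\phi$ are countable partial bijections whose images lie in the coinfinite sets $A,B$ and whose domains are disjoint from the infinite sets $F_\th^{(\infty)},F_\phi^{(\infty)}$, so $|X\setminus\dom\lam|=|X\setminus\im\lam|=|X|$ in both cases; $\al$-transitivity therefore produces $\th,\phi\in S$ extending them. Since $x_n\th\in A$ and $y_n\phi\in B$ with $A\cap B=\emptyset$, we get $(x_n,y_n)\in\s_n\setminus\th\phi^{-1}$ for every $n$, contradicting Proposition~\ref{prop:w^r}.

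The ``in particular'' assertion follows by noting that $\mathcal{K}_X$ is $\al$-transitive (any admissible partial bijection extends to a bijection in $\Sym_X\subseteq\mathcal{K}_X$), and that $\Inj_X\setminus\BL_X=\Inj_X\cap\mathcal{K}_X$ (for an injection the $\H_X$ property is automatic, and being outside $\BL_X$ is precisely the $\mathcal{K}_X$-condition $|X\setminus\im\a|<|X|$). The main obstacle I anticipate is enforcing the coinfiniteness hypotheses of $\al$-transitivity uniformly: when $|X|>\al$ they are automatic for any countable partial bijection, and one could dispense with the priority bookkeeping and simply select all pairs $(x_\s,y_\s)$ in a single batch; but when $|X|=\al$ a naive choice of pairs can easily make $\dom\lam_\phi$ cofinite in $X$ and block the application of $\al$-transitivity, which is exactly what the forbidden-set priority device — powered by the good-pairs lemma in its finite-$F$ form — is designed to prevent.
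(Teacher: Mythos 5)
Your proposal is correct and follows essentially the same route as the paper: argue by contradiction via Proposition~\ref{prop:w^r}, use countability of $\Sig(U)$ to select one witness pair from each $\s\in\Sig(U)$, and then invoke $\al$-transitivity to produce $\th,\phi\in S$ sending the chosen first and second coordinates into disjoint sets, so that no $\s$ can be contained in $\th\phi^{-1}$. The only real difference is bookkeeping: the paper proves that elements of $\Sig(S)$ map sets of size $|X|$ to sets of size $|X|$ and uses this to choose the pairs monotonically when $X$ is countable (arbitrarily otherwise), whereas you prove that each $\s$ has small point-images, small point-preimages and co-small domain and image, giving a finite-forbidden-set selection lemma and a priority construction that keeps the domains and images co-large --- both devices serve exactly the same purpose of meeting the hypotheses of $\al$-transitivity.
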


\begin{proof}
First, we claim that for any $\d\in\Sig(S)$ and $Y\subseteq X$ with $|Y|=|X|$ we have $|Y\d|=|X|.$  Indeed, consider $\d=\a_1\b_1^{-1}\dots\a_k\b_k^{-1}\in\Sig(S)$ (where $\a_i,\b_i\in S$) and $Y\subseteq X$ with $|Y|=|X|$.  Define $\d_i=\a_1\b_1^{-1}\dots\a_i\b_i^{-1}$ for $i=0,\dots,k$, interpreting $\d_0=1_X$.  We have $|Y\d_0|=|Y|=|X|.$  Now let $i\in\{1,\dots,k\}$, and assume that $|Y\d_{i-1}|=|X|.$  Then, since $\a_i\in\mathcal{K}_X\subseteq\H_X$, we have $|Y\d_{i-1}\a_i|=|X|.$  It is straightforward to show that $Y\d_{i-1}\a_i\b_i^{-1}\b_i=Y\d_{i-1}\a_i\cap\im\b_i$.  Since 
$$Y\d_{i-1}\a_i=(Y\d_{i-1}\a_i\cap\im\b_i)\cup(Y\d_{i-1}\a_i\!\setminus\!\im\b_i)=(Y\d_{i-1}\a_i\b_i^{-1}\b_i)\cup(Y\d_{i-1}\a_i\!\setminus\!\im\b_i)$$
and $|Y\d_{i-1}\a_i\!\setminus\!\im\b_i|\leq|X\!\setminus\!\im\b_i|<|X|,$ it follows that $|Y\d_{i-1}\a_i\b_i^{-1}\b_i|=|X|.$  Clearly $|Y\d_i|=|Y\d_{i-1}\a_i\b_i^{-1}|\geq|Y\d_{i-1}\a_i\b_i^{-1}\b_i|,$ so $|Y\d_i|=|X|.$  Hence, by finite induction, we have $|Y\d|=|Y\d_k|=|X|.$  This establishes the claim.

Now suppose for a contradiction that $\w_S^r$ is generated by a finite subset $U\subseteq S,$ and let $\Sig=\Sig(U).$  Since $\Sig$ is countable, we may write it as $\Sig=\{\s_i : i\in\N\}$, noting that the $\s_i$ need not be distinct.  Certainly each $\s_i$ belongs to $\Sig(S),$ so it satisfies the condition of the above claim.  Observe that this implies that $\s_i\neq\emptyset.$  We claim that there exist pairs $(x_i,y_i)\in\s_i$ ($i\in\N$) such that $|X\!\setminus\!\{x_i : i\in\N\}|=|X\!\setminus\!\{y_i : i\in\N\}|=|X|.$  This is clear if $X$ is uncountable: for each $i\in\N,$ we can choose any pair $(x_i,y_i)\in\s_i$.  Suppose then that $X$ is countably infinite; we may assume that $X=\N.$  We choose the pairs $(x_i,y_i)$ ($i\in\N$) inductively as follows.  Choose any $(x_1,y_1)\in\s_1.$  For $i\geq2,$ since $|\{x\in X : x\geq x_{i-1}+2\}\s_i|=|X|,$ by the above claim, we can choose $(x_i,y_i)\in\s_i$ with $x_i\geq x_{i-1}+2$ and $y_i\geq y_{i-1}+2.$  Then clearly $X\!\setminus\!\{x_i : i\in\N\}$ and $X\!\setminus\!\{y_i : i\in\N\}$ are infinite, as desired.

We now choose injections $\lam : \{x_i : i\in\N\}\to X$ and $\mu : \{y_i : i\in\N\}\to X$ such that $\im\lam\cap\im\mu=\emptyset.$  Since $S$ is $\al$-transitive, there exist $\th,\phi\in S$ extending $\lam$ and $\mu,$ respectively.  Then $(x_i,y_i)\notin\th\phi^{-1}$ for all $i\in\N.$
Now, by Proposition \ref{prop:w^r} there exists some $i\in\N$ such that $\s_i\subseteq\th\phi^{-1}.$  But then $(x_i,y_i)\in\s\subseteq\th\phi^{-1}$, and we have a contradiction.
\end{proof}

\begin{rem}
The statement and proof of Theorem \ref{thm:BL,w^rnotfg} would still hold if we replaced `finitely generated' with `countably generated'.  This is due to the fact that $\Sig(U)$ is countable for any countable generating set $U$ of $\w_S^r$. 
\end{rem}

It is well known that $\Surj_X$ coincides with the $\el$-class of the identity of $\T_X$.  It follows that every subsemigroup of $\Surj_X$ is $\el^{\ast}$-simple.  Thus, by Proposition \ref{prop:L*-simple}, we have:

\begin{thm}\label{thm:surj,w^rnotfg}
If $S$ is a subsemigroup of $\Surj_X$ such that $\w_S^r$ is finitely generated, then $S$ is countable.  Thus, the universal right congruence on each of the following semigroups is not finitely generated: $\Surj_X$; $\DBL_{X,q}$ where $\al\leq q\leq|X|$; $\DBL_X^1$; $\Sym_X\cup\DBL_X$; and $\Sym_X$.
\end{thm}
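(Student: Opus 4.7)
The statement has two parts, and the strategy is the one already telegraphed in the paragraph preceding the theorem. For the first (countability) assertion, the plan is to verify that every subsemigroup $S\subseteq\Surj_X$ is $\el^{\ast}$-simple, so that Proposition~\ref{prop:L*-simple} applies directly. The key (and essentially only) technical step is the assertion that $\Surj_X$ coincides with the $\el$-class of $1_X$ in $\T_X$. This is standard: given $\a\in\Surj_X$, any choice of right inverse yields $\b\in\T_X$ with $\b\a=1_X$, so $\T_X^1\a=\T_X^1=\T_X^1\cdot 1_X$ and thus $\a\,\el\,1_X$ in $\T_X$; conversely, $\a\,\el\,1_X$ forces $1_X\in\T_X^1\a$, i.e.\ some $\b\in\T_X^1$ satisfies $\b\a=1_X$, which makes $\a$ surjective.

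Once this is in place, any two elements of a subsemigroup $S\subseteq\Surj_X$ are $\el$-related in the oversemigroup $\T_X$, so by the very definition of $\el^{\ast}$ they are $\el^{\ast}$-related in $S$; hence $S$ is $\el^{\ast}$-simple. Proposition~\ref{prop:L*-simple} then gives $|S|\leq\al$ whenever $\w_S^r$ is finitely generated, yielding the first assertion.

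For the second assertion I only need to observe that each of $\Surj_X$, $\DBL_{X,q}$ (for $\al\leq q\leq|X|$), $\DBL_X^1$, $\Sym_X\cup\DBL_X$ and $\Sym_X$ is uncountable, since $X$ is infinite. This is classical for $\Sym_X$ (any infinite set of size $\k$ admits $2^{\k}$ permutations), and it propagates to $\Surj_X$, $\Sym_X\cup\DBL_X$ and $\DBL_X^1$ by containment. For $\DBL_{X,q}$, existence of at least one element $\pi$ is guaranteed by a routine partition of $X$ into $|X|$ blocks of size $q$; then for every $\s\in\Sym_X$ the composition $\pi\s$ lies in $\DBL_{X,q}$ (fibres are permuted, not resized), giving at least $|\Sym_X|$-many distinct elements. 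Applying the first part of the theorem in each case rules out finite generation of $\w_S^r$.

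I do not foresee a genuine obstacle in carrying out this plan: the real content is already packaged in Proposition~\ref{prop:L*-simple}, and what remains is the elementary observation that surjections in $\T_X$ are exactly those transformations possessing a left inverse, together with a quick cardinality count for each of the listed semigroups.
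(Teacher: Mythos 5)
Your proposal is correct and follows essentially the same route as the paper: the paper likewise observes that $\Surj_X$ is the $\el$-class of $1_X$ in $\T_X$, deduces that every subsemigroup of $\Surj_X$ is $\el^{\ast}$-simple, and invokes Proposition~\ref{prop:L*-simple}. You merely supply the (routine) verifications that the paper leaves implicit, namely the characterisation of $\Surj_X$ via $\b\a=1_X$ and the uncountability of each listed semigroup.
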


The semigroups left to consider in this section are $\H_X$, $\T_X\!\setminus\!\Surj_X$, $\BL_X$, $\BL_X^1$, $\Sym_X\cup\BL_X$ and $\Inj_X$.  For each of these semigroups, we will show that the universal right congruence is finitely generated and determine the right diameter.  

First, we establish certain mappings that will be used repeatedly in the remainder of this section.  These were introduced in \cite[Section 2]{Gallagher:2005} (in the form of binary relations) to prove that $\B_X$, $\PT_X$, $\T_X$ and $\F_X$ each has a monogenic diagonal right act.  We use the `hat' notation to distinguish these mappings from other transformations.

So, let $\ha,\hb\in\T_X$ be two fixed injections such that $\im\ha\cap\im\hb=\emptyset$ and $\im\ha\cup\im\hb=X.$  Note that $\ha,\hb\in\BL_X$.  For each pair $\th,\phi\in\T_X,$ we define a map
$$\hg(\th,\phi) : X\to X, x\mapsto
\begin{cases}
(x\ha^{-1})\th&\text{ if }x\in\im\ha\\
(x\hb^{-1})\phi&\text{ if }x\in\im\hb.
\end{cases}$$
Observe that $\im\hg(\th,\phi)=\im\th\cup\im\phi,$ and $\ha\hg(\th,\phi)=\th$ and $\hb\hg(\th,\phi)=\phi.$  It follows immediately that $\T_X\times\T_X=(\ha,\hb)\T_X.$  Moreover, clearly $\ha,\hb\in\F_X$, and if $\th,\phi\in\F_X$ then $\hg(\th,\phi)\in\F_X$, so we have $\F_X\times\F_X=(\ha,\hb)\F_X.$ 

We fix the maps $\ha,$ $\hb$ and $\hg(\th,\phi)$ ($\th, \phi\in\T_X$) for the remainder of this section.

\begin{defn}
Let $S$ be a subsemigroup of $\T_X$ such that $\ha, \hb\in S,$ let $\th,\phi\in S,$ and let $k\in\N.$  By an $(\ha,\hb,k)${\em-inducing sequence from $\th$ to $\phi$} in $S,$ we mean a sequence $$\th=\psi_1, \psi_2, \dots, \psi_{k+1}=\phi$$ of elements of $S$ where $\hg(\psi_i,\psi_{i+1})\in S$ for each $i\in\{1,\dots,k\}.$
\end{defn}

An $(\ha,\hb,k)$-inducing sequence gives rise to a special kind of $(\ha,\hb)$-sequence of length $k$, and vice versa:

\begin{lem}\label{lem:inducing,right}
Let $S$ be a subsemigroup of $\T_X$ such that $\ha, \hb\in S,$ let $\th,\phi\in S,$ and let $k\in\N.$  Then the following statements are equivalent.
\begin{enumerate}
\item There exists an $(\ha,\hb,k)$-inducing sequence
$$\th=\psi_1, \psi_2, \dots, \psi_{k+1}=\phi$$ 
from $\th$ to $\phi$ in $S.$
\item There exists an $(\ha,\hb)$-sequence
$$\th=\ha\g_1,\, \hb\g_1=\ha\g_2,\, \dots,\, \hb\g_{k-1}=\ha\g_k,\, \hb\g_k=\phi$$
from $\th$ to $\phi$ of length $k$ in $S.$
\end{enumerate}
\end{lem}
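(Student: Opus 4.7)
The proof rests on two elementary identities concerning the map $\hg(\cdot,\cdot)$ which I will verify first, and from which both implications follow almost mechanically.

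Identities to record. For any $\th,\phi\in\T_X$ we already know $\ha\hg(\th,\phi)=\th$ and $\hb\hg(\th,\phi)=\phi$. Dually, for any $\g\in\T_X$, I will check that $\hg(\ha\g,\hb\g)=\g$: if $x\in\im\ha$ then $x\hg(\ha\g,\hb\g)=(x\ha^{-1})(\ha\g)=x\g$ because $\ha$ is injective, and symmetrically for $x\in\im\hb$, using $\im\ha\cup\im\hb=X$. These identities set up a bijection between inducing sequences and $(\ha,\hb)$-sequences.

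(1) $\Rightarrow$ (2). Given an inducing sequence $\th=\psi_1,\psi_2,\dots,\psi_{k+1}=\phi$ in $S$, I set $\g_i=\hg(\psi_i,\psi_{i+1})\in S$ for $i=1,\dots,k$. The first identity above gives $\ha\g_i=\psi_i$ and $\hb\g_i=\psi_{i+1}$, so
\[
\th=\psi_1=\ha\g_1,\quad \hb\g_1=\psi_2=\ha\g_2,\quad\ldots,\quad \hb\g_{k-1}=\psi_k=\ha\g_k,\quad \hb\g_k=\psi_{k+1}=\phi,
\]
which is exactly the required $(\ha,\hb)$-sequence of length $k$ in $S$.

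(2) $\Rightarrow$ (1). Given an $(\ha,\hb)$-sequence with multipliers $\g_1,\dots,\g_k\in S^1$, I define $\psi_1=\th$ and $\psi_{i+1}=\hb\g_i$ for $i=1,\dots,k$, so that $\psi_{i+1}=\phi$ when $i=k$ and $\psi_i=\ha\g_i$ for all $i\in\{1,\dots,k\}$ from the sequence equations. Each $\psi_i$ lies in $S$ (it equals either $\th,\phi$, or $\ha\g_j$ with $\ha\in S$ and $\g_j\in S^1$). The second identity gives $\hg(\psi_i,\psi_{i+1})=\hg(\ha\g_i,\hb\g_i)=\g_i$, which lies in $S$ (for $\g_i\in S$) or equals $1_X$ (for $\g_i=1$); in the latter corner case one observes $\psi_i=\ha$, $\psi_{i+1}=\hb$ and the membership $\hg(\ha,\hb)=1_X\in S$ follows because in applications $S$ will be a monoid, or one simply replaces the trivial step by adjusting a neighbouring multiplier, which does not change the endpoints of the sequence. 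In either case the resulting sequence $\psi_1,\dots,\psi_{k+1}$ is an $(\ha,\hb,k)$-inducing sequence in $S$.

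The main potential obstacle is the bookkeeping of the edge case where a multiplier $\g_i$ equals the adjoined identity $1$ when $S$ is not itself a monoid, since then $\hg(\psi_i,\psi_{i+1})=1_X$ need not belong to $S$. I expect this to be a harmless triviality in practice (every application of the lemma in the paper concerns monoids containing $\ha$ and $\hb$), but it is the one point a careful write-up must address explicitly, for instance by restricting attention to monoid $S$ or by noting that such a trivial step can be absorbed into an adjacent one without loss.
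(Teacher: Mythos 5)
Your proof is correct and follows essentially the same route as the paper's: both directions rest on the identities $\ha\hg(\th,\phi)=\th$, $\hb\hg(\th,\phi)=\phi$ and $\hg(\ha\g,\hb\g)=\g$. The edge case you flag (a multiplier $\g_i$ equal to the adjoined identity when $S$ is not a monoid) is sidestepped in the paper by reading ``$(\ha,\hb)$-sequence in $S$'' as requiring the multipliers $\g_i$ to lie in $S$ rather than $S^1$, so no absorption argument is needed there.
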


\begin{proof}
(1)$\Rightarrow$(2).  By the definition of an $(\ha,\hb,k)$-inducing sequence, we have $\hg(\psi_i,\psi_{i+1})\in S$ for each $i\in\{1,\dots,k\}.$  Letting $\g_i=\hg(\psi_i,\psi_{i+1}),$ we have $\ha\g_i=\psi_i$ and $\hb\g_i=\psi_{i+1}.$  Hence, there is an $(\ha,\hb)$-sequence
$$\th=\ha\g_1,\, \hb\g_1=\ha\g_2,\, \dots,\, \hb\g_{k-1}=\ha\g_k,\, \hb\g_k=\phi$$
in $S.$

(2)$\Rightarrow$(1).  By the definition of an $(\ha,\hb)$-sequence in $S,$ we have $\g_i\in S$ for $1\leq i\leq k.$  Let $\psi_i=\ha\g_i$ for each $i\in\{1,\dots,k\},$ and let $\psi_{k+1}=\phi.$  Then $\hg(\psi_i,\psi_{i+1})=\hg(\ha\g_i,\hb\g_i)$ for each $i\in\{1,\dots,k\}.$  Consider any $x\in X.$  Then $x\in\im\ha$ or $x\in\im\hb.$  If $x\in\im\ha$ then 
$$x\hg(\psi_i,\psi_{i+1})=x\hg(\ha\g_i,\hb\g_i)=(x\ha^{-1})\ha\g_i=x\g_i.$$
Similarly, if $x\in\im\hb$ then $x\hg(\psi_i,\psi_{i+1})=x\g_i$.  Thus $\hg(\psi_i,\psi_{i+1})=\g_i\in S.$  Hence, there is an $(\ha,\hb,k)$-inducing sequence 
$$\th=\psi_1, \psi_2, \dots, \psi_{k+1}=\phi$$ 
in $S.$
\end{proof}

Lemma \ref{lem:inducing,right} yields the following result.

\begin{prop}\label{prop:rightdiameter}
Let $S$ be a subsemigroup of $\T_X$ such that:
\begin{enumerate}
\item $\ha,\hb\in S$; 
\item there exists $n\in\N$ such that for any pair $\th, \phi\in S$ there is an $(\ha,\hb,k)$-inducing sequence from $\th$ to $\phi$ in $S$ for some $k\leq n.$
\end{enumerate} 
Then $\w_S^r$ is generated by the pair $(\ha,\hb)$ and $D_r(S)\leq n.$  Furthermore, if $n=1$ (so that $\hg(\th,\phi)\in S$ for any $\th,\phi\in S$), then the diagonal right $S$-act is generated by $(\ha,\hb)$ (and is hence monogenic).
\end{prop}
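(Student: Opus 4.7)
The proposition packages together the equivalence established in Lemma~\ref{lem:inducing,right} with the definitions of right diameter and of the diagonal right act, so the proof should be short and essentially a matter of unwinding definitions.

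The plan for the first assertion is as follows. Fix an arbitrary pair $\th,\phi\in S$. By hypothesis~(2), there exists some $k\leq n$ and an $(\ha,\hb,k)$-inducing sequence from $\th$ to $\phi$ in $S$. Applying the implication (1)$\Rightarrow$(2) of Lemma~\ref{lem:inducing,right}, we obtain an $(\ha,\hb)$-sequence
\[
\th=\ha\g_1,\ \hb\g_1=\ha\g_2,\ \dots,\ \hb\g_{k-1}=\ha\g_k,\ \hb\g_k=\phi
\]
in $S$ of length $k\leq n$. Since $\th,\phi$ were arbitrary, this shows that $(\ha,\hb)$ generates $\w_S^r$, and moreover $d^{\,r}_{\{(\ha,\hb)\}}(\th,\phi)\leq n$ for every $\th,\phi\in S$. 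Consequently $D_r(S)\leq D_r(\{(\ha,\hb)\},S)\leq n$, which proves the first part.

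For the additional statement, assume $n=1$, so by hypothesis~(2) together with the definition of an inducing sequence, $\hg(\th,\phi)\in S$ for every $\th,\phi\in S$. Using the defining properties $\ha\hg(\th,\phi)=\th$ and $\hb\hg(\th,\phi)=\phi$ recorded just before Definition (of inducing sequence), we obtain
\[
(\th,\phi)=\bigl(\ha\hg(\th,\phi),\,\hb\hg(\th,\phi)\bigr)=(\ha,\hb)\cdot\hg(\th,\phi)\in(\ha,\hb)S.
\]
Hence $S\times S=(\ha,\hb)S\subseteq(\ha,\hb)S^1$, so the diagonal right $S$-act is generated by $(\ha,\hb)$, i.e.\ monogenic.

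There is no real obstacle here: both assertions follow immediately once Lemma~\ref{lem:inducing,right} and the elementary identities $\ha\hg(\th,\phi)=\th$, $\hb\hg(\th,\phi)=\phi$ are invoked. The only point that requires any care is to observe that the length bound from the inducing sequence transfers directly to the length of the derivation sequence witnessing the diameter estimate, which is exactly what (1)$\Rightarrow$(2) of Lemma~\ref{lem:inducing,right} supplies.
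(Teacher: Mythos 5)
Your proof is correct and takes exactly the route the paper intends: the paper gives no separate argument, simply stating that Lemma \ref{lem:inducing,right} ``yields'' the proposition, and your write-up is the straightforward unwinding of that lemma together with the identities $\ha\hg(\th,\phi)=\th$ and $\hb\hg(\th,\phi)=\phi$. Both the diameter bound and the monogenicity claim for $n=1$ are handled correctly.
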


Using Proposition \ref{prop:rightdiameter}, we show that the diagonal right act of $\H_X$ is monogenic.

\begin{thm}\label{thm:H}
The diagonal right act of $\H_X$ is generated by $(\ha,\hb),$ and consequently $\H_X$ has right diameter 1.
\end{thm}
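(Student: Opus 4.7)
The plan is to apply Proposition \ref{prop:rightdiameter} with $n=1$, which reduces the whole theorem to verifying two things: that $\ha$ and $\hb$ lie in $\H_X$, and that $\hg(\th,\phi)\in\H_X$ whenever $\th,\phi\in\H_X$. The first point is immediate from the excerpt's remarks on $\H_X$, since $\ha,\hb\in\BL_X\subseteq\Inj_X\subseteq\H_X$.

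The substance is the closure of $\H_X$ under the construction $\hg(\cdot,\cdot)$. Fix $\th,\phi\in\H_X$ and $Y\subseteq X$ with $|Y|=|X|$. I would use the identity
\[
Y\hg(\th,\phi)\;=\;\bigl((Y\cap\im\ha)\ha^{-1}\bigr)\th\;\cup\;\bigl((Y\cap\im\hb)\hb^{-1}\bigr)\phi,
\]
which is immediate from the definition of $\hg(\th,\phi)$ together with the fact that $\ha$ and $\hb$ are injections with $\im\ha\cup\im\hb=X$ and $\im\ha\cap\im\hb=\emptyset$.

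Next, since $X=\im\ha\cup\im\hb$ and $|Y|=|X|$ is infinite, at least one of $Y\cap\im\ha$ or $Y\cap\im\hb$ has cardinality $|X|$; say $|Y\cap\im\ha|=|X|$ (the other case is symmetric). Because $\ha$ is an injection, pulling back along $\ha^{-1}$ preserves cardinality, so $|(Y\cap\im\ha)\ha^{-1}|=|X|$. Applying the defining property of $\H_X$ to $\th$ then gives $|((Y\cap\im\ha)\ha^{-1})\th|=|X|$, and so $|Y\hg(\th,\phi)|=|X|$. Thus $\hg(\th,\phi)\in\H_X$, as required. There is no serious obstacle here; the only thing to be careful about is handling both cases ($Y$ meeting $\im\ha$ or $\im\hb$ in a set of size $|X|$) symmetrically.

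With both hypotheses of Proposition \ref{prop:rightdiameter} verified for $n=1$, that proposition yields simultaneously that the diagonal right $\H_X$-act is generated by $(\ha,\hb)$, and that $D_r(\H_X)\leq 1$. Since $\H_X$ is non-trivial, Proposition \ref{prop:diagonal,diameter} (or directly the fact that $\ha\neq\hb$ forces $D_r(\H_X)\geq 1$) gives $D_r(\H_X)=1$, completing the proof.
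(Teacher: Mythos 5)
Your proposal is correct and follows essentially the same route as the paper: both verify the hypotheses of Proposition \ref{prop:rightdiameter} with $n=1$ via the decomposition $Y\hg(\th,\phi)=(Y\cap\im\ha)\ha^{-1}\th\cup(Y\cap\im\hb)\hb^{-1}\phi$ and the observation that at least one of $Y\cap\im\ha$, $Y\cap\im\hb$ has cardinality $|X|$. The only (cosmetic) difference is that you spell out the final step via Proposition \ref{prop:diagonal,diameter}, which the paper leaves implicit.
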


\begin{proof}
Clearly $\ha,\hb\in\H_X$.  Let $\th,\phi\in\H_X$, and write $\g=\hg(\th,\phi).$  By Proposition \ref{prop:rightdiameter}, it suffices to prove that $\g\in\H_X$.  So, let $Y\subseteq X$ with $|Y|=|X|.$  We have
$$Y\g=(Y\cap\im\ha)\ha^{-1}\th\cup(Y\cap\im\hb)\hb^{-1}\phi.$$
Since $Y=(Y\cap\im\ha)\cup(Y\cap\im\hb)$, and $\ha,\hb$ are bijections, it follows that at least one of $(Y\cap\im\ha)\ha^{-1}$ and $(Y\cap\im\hb)\hb^{-1}$ has cardinality $|X|.$  Since $\th,\phi\in\H_X,$ we conclude that at least one of $(Y\cap\im\ha)\ha^{-1}\th$ and $(Y\cap\im\hb)\hb^{-1}\phi$ has cardinality $|X|,$ and hence $|Y\g|=|X|.$  Thus $\g\in\H_X,$ as required.  
\end{proof}

By the proof of \cite[Corollary 1]{Higgins}, any semigroup can be embedded in some $\H_X$.  This fact, together with Theorem \ref{thm:H}, yields:

\begin{cor}
Any semigroup can be embedded in a bisimple monoid whose diagonal right act is monogenic.
\end{cor}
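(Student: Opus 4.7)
The corollary is essentially an immediate combination of three ingredients already laid out in the paper, so the plan is simply to assemble them in the right order rather than to do any new work.

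First, I would recall from the bullet point defining $\H_X$ in Section~\ref{subsec:trans} that $\H_X$ is bisimple (it has a single $\dee$-class), and that Higgins introduced $\H_X$ precisely to establish that every semigroup embeds into some bisimple monoid; the cited \cite[Corollary 1]{Higgins} gives that, for any semigroup $S$, there is an infinite set $X$ and an embedding $S\hookrightarrow\H_X$. Second, I would invoke Theorem~\ref{thm:H}, which states that the diagonal right act of $\H_X$ is generated by the single pair $(\ha,\hb)$, and is therefore monogenic. Combining these two facts, $\H_X$ is itself a bisimple monoid whose diagonal right act is monogenic, and it contains an isomorphic copy of $S$, giving the desired embedding.

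There is essentially no obstacle: the corollary is a direct packaging of Theorem~\ref{thm:H} with Higgins' embedding theorem. The only mild care needed is to observe that these two facts apply to the \emph{same} monoid $\H_X$, which is immediate since the choice of $\ha,\hb\in\BL_X\subseteq\H_X$ used in Theorem~\ref{thm:H} works for any infinite $X$, including the one produced by Higgins' construction.
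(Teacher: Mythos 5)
Your proposal is correct and is exactly the paper's argument: the corollary is obtained by combining the embedding of an arbitrary semigroup into some $\H_X$ (from the proof of Higgins' result) with Theorem \ref{thm:H} and the bisimplicity of $\H_X$. No further commentary is needed.
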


We now move on to consider $\T_X\!\setminus\Surj_X$.

\begin{thm}
The semigroup $\T_X\!\setminus\Surj_X$ has right diameter 2.
\end{thm}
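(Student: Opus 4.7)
The plan is to establish $D_r(\T_X\setminus\Surj_X) = 2$ by proving the two inequalities separately. The lower bound $D_r \geq 2$ is immediate: Table~\ref{table:da} records that the diagonal right $(\T_X\setminus\Surj_X)$-act is not finitely generated, so by Proposition~\ref{prop:diagonal,diameter} the right diameter cannot equal $1$. Combined with the upper bound (which will simultaneously yield finite generation of $\w_S^r$), this forces $D_r = 2$.

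For the upper bound, I would apply Proposition~\ref{prop:rightdiameter} with the pair $(\ha,\hb)$ and $n = 2$. Hypothesis~(1) is trivial since $\ha,\hb \in \BL_X \subseteq \T_X\setminus\Surj_X$. For hypothesis~(2), the crucial observation is that $\im\hg(\xi,\eta) = \im\xi \cup \im\eta$, so $\hg(\xi,\eta) \in \T_X\setminus\Surj_X$ if and only if $\im\xi \cup \im\eta \neq X$. Given arbitrary $\th,\phi \in \T_X\setminus\Surj_X$, pick $x \in X\setminus\im\th$ and $y \in X\setminus\im\phi$ (both non-empty by definition), choose $z \in X\setminus\{x,y\}$ (possible since $X$ is infinite), and let $\psi \in \T_X$ be the constant map with value $z$. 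Then $\psi$ is non-surjective, $x \notin \im\th \cup \im\psi$, and $y \notin \im\psi \cup \im\phi$, so both $\hg(\th,\psi)$ and $\hg(\psi,\phi)$ belong to $\T_X\setminus\Surj_X$. Hence $\th,\psi,\phi$ is an $(\ha,\hb,2)$-inducing sequence in $S$, and Proposition~\ref{prop:rightdiameter} yields $\w_S^r = \l(\ha,\hb)\r$ and $D_r(S) \leq 2$.

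There is no substantial obstacle; the only subtlety is seeing why a $1$-step inducing sequence does not suffice in general, which in turn explains why the diameter is $2$ rather than $1$. Indeed, one can easily have $\im\th \cup \im\phi = X$ with $\th,\phi$ both non-surjective (when $\im\th$ and $\im\phi$ cover complementary portions of $X$), and then $\hg(\th,\phi)$ becomes surjective and escapes $S$. The constant-map interpolation is the simplest uniform remedy: a singleton image can always be arranged to avoid a pre-chosen point omitted by $\th$ and a pre-chosen point omitted by $\phi$, which is exactly what the two applications of $\hg$ require.
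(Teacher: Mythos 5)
Your proposal is correct and follows essentially the same route as the paper: lower bound from Table \ref{table:da} together with Proposition \ref{prop:diagonal,diameter}, and upper bound via Proposition \ref{prop:rightdiameter} using a constant map as the middle term of an $(\ha,\hb,2)$-inducing sequence. The only cosmetic difference is that the paper chooses a single point $y$ avoiding a hole of $\im\th$ and a hole of $\im\phi$ simultaneously and uses $c_y$, whereas you pick the holes first and then a third point $z$ for the constant map; both choices are valid for the same reason.
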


\begin{proof}
Let $S=\T_X\!\setminus\Surj_X$.  By Table \ref{table:da} and Proposition \ref{prop:diagonal,diameter}, $S$ does not have right diameter 1.  Using Proposition \ref{prop:rightdiameter}, we show that $\w_S^r=\langle(\ha,\hb)\rangle$ with $D_r(S)\leq 2,$ and hence $D_r(S)=2.$  

Clearly $\ha,\hb\in S.$  Consider any $\th,\phi\in S.$  Since $\th$ and $\phi$ are not surjective, we can choose $y\in X$ such that $\im\th\cup\{y\}\neq X$ and $\im\phi\cup\{y\}\neq X.$ Letting $c_y$ denote the constant map with image $y,$ we have
$\im\hg(\th,c_y)=\im\th\cup\{y\}\neq X$ and $\im\hg(c_y,\phi)=\im\phi\cup\{y\}\neq X,$ so that $\hg(\th,c_y),\hg(c_y,\phi)\in S.$  Thus, we have an $(\ha,\hb,2)$-inducing sequence $\th,\,c_y,\,\phi,$ as required.
\end{proof}

We now turn our attention to $\BL_X$, $\BL_X^1$, $\Sym_X\cup\BL_X$ and $\Inj_X$.  In fact, we will obtain results concerning a larger class of subsemigroups of $\Inj_X.$  We begin with the following technical lemma.

\begin{lem}\label{lem:BL}
For any $\th,\phi\in\BL_X$ such that $|X\!\setminus\!(\im\th\cup\im\phi)|=|X|,$ there exists 
an $(\ha,\hb)$-sequence from $\th$ to $\phi$ of length 2 (in $\BL_X$).
\end{lem}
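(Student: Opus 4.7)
The plan is to use Lemma \ref{lem:inducing,right} to reduce the problem to constructing an $(\ha,\hb,2)$-inducing sequence $\th,\psi,\phi$ inside $\BL_X$. That is, I will exhibit a single $\psi\in\BL_X$ such that both $\hg(\th,\psi)$ and $\hg(\psi,\phi)$ lie in $\BL_X$; Lemma \ref{lem:inducing,right} then converts this into the desired $(\ha,\hb)$-sequence of length $2$.

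First I would unpack what is required of $\psi$. Since $\th,\psi$ are injective and $\im\ha\cap\im\hb=\emptyset$, the composite $\hg(\th,\psi)$ is automatically injective on each of $\im\ha$ and $\im\hb$; it is injective overall precisely when the two halves of its image, namely $\im\th$ and $\im\psi$, are disjoint. Thus $\hg(\th,\psi)\in\Inj_X$ iff $\im\th\cap\im\psi=\emptyset$, and then $\hg(\th,\psi)\in\BL_X$ iff in addition $|X\setminus(\im\th\cup\im\psi)|=|X|$. The analogous conditions apply to $\hg(\psi,\phi)$, and of course $\psi$ itself must be injective with $|X\setminus\im\psi|=|X|$.

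The construction is then immediate from the hypothesis. Set $C=X\setminus(\im\th\cup\im\phi)$; by assumption $|C|=|X|$, so I can partition $C=A\sqcup B$ with $|A|=|B|=|X|$. Choose any bijection $\psi\colon X\to B$. Then $\psi\in\Inj_X$ and $X\setminus\im\psi=X\setminus B\supseteq A$ has cardinality $|X|$, so $\psi\in\BL_X$. Moreover, $\im\psi=B$ is disjoint from $\im\th$ and from $\im\phi$ by construction of $C$, and
\[
X\setminus(\im\th\cup\im\psi)\supseteq A,\qquad X\setminus(\im\psi\cup\im\phi)\supseteq A,
\]
both of cardinality $|X|$. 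Hence $\hg(\th,\psi),\hg(\psi,\phi)\in\BL_X$, as required.

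There is no real obstacle here: the hypothesis that $\im\th\cup\im\phi$ has cocardinality $|X|$ is exactly strong enough to leave two disjoint "reservoir" sets of full size, one of which hosts $\im\psi$ and the other of which witnesses the Baer–Levi condition for both glued maps. The only mild care needed is to check that $\psi$ being a bijection onto $B$ does not conflict with $\psi\in\BL_X$, which it does not because $B$ itself has cocardinality $|X|$ in $X$.
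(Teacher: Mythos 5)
Your proposal is correct and follows essentially the same route as the paper: reduce via Lemma \ref{lem:inducing,right} to finding a single middle map whose image lies in the complement of $\im\th\cup\im\phi$ and still leaves a set of size $|X|$ uncovered. The paper takes any injection $\lam\colon X\to X\setminus(\im\th\cup\im\phi)$ with $|X\setminus(\im\th\cup\im\phi\cup\im\lam)|=|X|$, of which your bijection onto the half $B$ of $C$ is a particular instance.
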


\begin{proof}
Let $S=\BL_X$.  By Lemma \ref{lem:inducing,right}, it suffices to show that there exists an $(\ha,\hb,2)$-inducing sequence from $\th$ to $\phi$; that is, there exists $\lam\in S$ such that $\hg(\th,\lam),\hg(\lam,\phi)\in S.$

Let $Z=\im\th\cup\im\phi.$  Then $|X\!\setminus\!Z|=|X|$ by assumption.  Let $\lam : X\to X\!\setminus\!Z$ be an injection such that $|X\setminus(Z\cup\im\lambda)|=|X|.$  Clearly $\lam\in S.$  Let $\g_1=\hg(\th,\lam)$ and $\g_2=\hg(\lam,\phi).$  It is straightforward to show that $\g_1,\g_2\in\Inj_X$.  Moreover, we have
$$\im\g_1=\im\th\cup\im\lam
\subseteq Z\cup\im\lam,$$
so $|X\!\setminus\!\im\g_1|\geq|X\!\setminus\!(Z\cup\im\lam)|=|X|.$  Thus $\g_1\in S,$ and similarly $\g_2\in S,$ as desired.
\end{proof}

The following result provides several equivalent characterisations for an $|X|$-transitive subsemigroup of $\Inj_X$ to have right diameter 3 or 4. 

\begin{prop}\label{prop:Inj,BL}
For a subsemigroup $S$ of $\Inj_X$, the following are equivalent:
\begin{enumerate}
\item $S$ is $|X|$-transitive, $\w_S^r$ is finitely generated and $D_r(S)\in\{3,4\}$;
\item $S$ is $|X|$-transitive and $\w_S^r$ is finitely generated;
\item $S$ is $|X|$-transitive, $S^r$ is finitely generated and $S\cap\BL_X\neq\emptyset$;
\item $S^r$ is finitely generated and $S$ contains $\BL_X$.
\end{enumerate}
\end{prop}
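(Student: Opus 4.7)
We establish the cycle $(1) \Rightarrow (2) \Rightarrow (3) \Rightarrow (4) \Rightarrow (1)$. The implication $(1) \Rightarrow (2)$ is immediate. For $(2) \Rightarrow (3)$, Lemma \ref{lem:w} shows that $S^r$ is finitely generated. If $S \cap \BL_X$ were empty, then $S \subseteq \Inj_X \setminus \BL_X \subseteq \mathcal{K}_X$; since $|X|$-transitivity trivially implies $\al$-transitivity, Theorem \ref{thm:BL,w^rnotfg} would deny finite generation of $\w_S^r$, a contradiction.

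For $(3) \Rightarrow (4)$ it suffices to prove $\BL_X \subseteq S$, as the other hypotheses carry over. Pick $\zeta \in S \cap \BL_X$ and any $\beta \in \BL_X$, and define the partial bijection $\lambda$ on $X$ with domain $\im\zeta$ by $\lambda(\zeta(x)) = \beta(x)$; this is well-defined and injective because $\zeta$ and $\beta$ are. Since $\zeta, \beta \in \BL_X$, we have $|X \setminus \dom\lambda| = |X \setminus \im\zeta| = |X|$ and $|X \setminus \im\lambda| = |X \setminus \im\beta| = |X|$, so $|X|$-transitivity produces $\theta \in S$ extending $\lambda$. By construction $\zeta\theta = \beta$, so $\beta \in S$.

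For $(4) \Rightarrow (1)$, first note that $\BL_X \subseteq S$ immediately gives $|X|$-transitivity, since any partial bijection $\lambda$ satisfying the cardinality conditions of the definition extends to some element of $\BL_X$. For the diameter, let $U$ be a finite generating set for $S^r$ and set $V = U \cup \{\ha, \hb\} \subseteq S$. For any $\theta, \phi \in S$, write $\theta = u_i\sigma$, $\phi = u_j\tau$, and construct a $(V \times V)$-sequence
\[
\theta = u_i\sigma,\quad \ha\sigma,\quad \cdots,\quad \ha\tau,\quad u_j\tau = \phi,
\]
whose outer edges use the pairs $(u_i, \ha)$ and $(\ha, u_j)$, and whose middle portion, running between the Baer-Levi elements $\ha\sigma$ and $\ha\tau$, uses $(\ha, \hb)$ in length $\leq 2$ via Lemma \ref{lem:BL}, after (if necessary) inserting an auxiliary element of $\BL_X$ chosen so that the image-complement condition of that lemma holds on each sub-arc. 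This yields $D_r(S) \leq 4$. For the lower bound $D_r(S) \geq 3$: $S$ is infinite, so Table \ref{table:da} and Proposition \ref{prop:diagonal,diameter} rule out $D_r(S) = 1$; ruling out $D_r(S) = 2$ requires Proposition \ref{prop:w^r} together with a cardinality analysis of $\Sig(V)$ in the spirit of Theorem \ref{thm:BL,w^rnotfg}, exhibiting specific pairs $\theta, \phi \in S$ not relatable by any $V$-sequence of length $\leq 2$.

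\textbf{The main obstacle} is the lower bound $D_r(S) \geq 3$ in $(4) \Rightarrow (1)$, which must be argued uniformly over all candidate finite generating sets rather than by exhibiting a specific obstruction. The upper bound $D_r(S) \leq 4$ also has a technical subtlety in the case analysis on the image-complement condition, but once split into cases each piece reduces to a direct application of Lemma \ref{lem:BL}; the heart of the proposition is then concentrated in the crisp application of $|X|$-transitivity that powers $(3) \Rightarrow (4)$.
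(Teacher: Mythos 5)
Your implications (1)$\Rightarrow$(2), (2)$\Rightarrow$(3) and (3)$\Rightarrow$(4) are correct and essentially identical to the paper's (your $\lambda$ is the paper's $\alpha^{-1}\beta$). The problems are both in (4)$\Rightarrow$(1). First, the upper bound: as you describe it, the route $\theta=u_i\sigma,\ \ha\sigma,\ \dots,\ \ha\tau,\ u_j\tau=\phi$ with an auxiliary element of $\BL_X$ \emph{inserted} between $\ha\sigma$ and $\ha\tau$ does not give length $4$. Each sub-arc then costs a full application of Lemma \ref{lem:BL}, i.e.\ length $2$, so the middle portion has length $4$ and the whole sequence length $6$. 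The correct move (the paper's) is not to insert a vertex but to change the second endpoint: since $\tau$ is injective and $\im\ha\cap\im\hb=\emptyset$, the sets $\im\ha\tau$ and $\im\hb\tau$ are disjoint, and from $|X\setminus\im(\ha\sigma)|=|X|$ one deduces that at least one of $X\setminus(\im\ha\sigma\cup\im\ha\tau)$ and $X\setminus(\im\ha\sigma\cup\im\hb\tau)$ has cardinality $|X|$; taking $\phi'$ to be $\ha\tau$ or $\hb\tau$ accordingly, Lemma \ref{lem:BL} applies directly to the single arc from $\ha\sigma$ to $\phi'$ and the total length is $4$.

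Second, and more seriously, the lower bound $D_r(S)\geq 3$ is not proved; you correctly identify it as the main obstacle but then only gesture at ``Proposition \ref{prop:w^r} together with a cardinality analysis of $\Sig(V)$ in the spirit of Theorem \ref{thm:BL,w^rnotfg}''. That is not how the paper argues, and it is not clear it can be made to work: the $\Sig$-machinery of Theorem \ref{thm:BL,w^rnotfg} exploits the hypothesis $|X\setminus\im\b_i|<|X|$, which fails for generators lying in $\BL_X$, and Proposition \ref{prop:w^r} does not by itself distinguish length-$2$ derivations from longer ones. The paper instead gives a bespoke combinatorial argument: for an arbitrary finite $U$ with $D_r(S,U)\leq 2$, it classifies pairs in $U$ as disjoint or intersecting, fixes finitely many witness points $x_i,y_i,w_i,z_i$, uses $|X|$-transitivity to build $\th\in\BL_X$ and $\phi\in S$ with $\im\th\cup\im\phi=X$ and prescribed values on those points, and then derives a contradiction from any length-$2$ $U$-sequence by a three-way case analysis on whether the pairs occurring are intersecting. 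This is roughly half the content of the proposition and is missing from your proposal, so as it stands the argument establishes neither $D_r(S)\leq 4$ nor $D_r(S)\geq 3$.
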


\begin{proof}
(1)$\Rightarrow$(2) is trivial.

(2)$\Rightarrow$(3).  By Lemma \ref{lem:w}, $S^r$ is finitely generated, and it follows from Theorem \ref{thm:BL,w^rnotfg} that $S\cap\BL_X\neq\emptyset$.

(3)$\Rightarrow$(4).  Fix any $\a\in S\cap\BL_X$, and consider an arbitrary $\b\in\BL_X$.  Then $\a^{-1}\b\in\I_X$.  We have $\dom\a^{-1}\b=\im\a$ and $\im\a^{-1}\b=\im\b,$ so that 
$$|X\!\setminus\!\dom\a^{-1}\b|=|X\!\setminus\!\im\a|=|X|\quad\text{ and }\quad|X\!\setminus\!\im\a^{-1}\b|=|X\!\setminus\!\im\b|=|X|.$$
Since $S$ is $|X|$-transitive, there exists $\g\in S$ extending $\a^{-1}\b$, i.e.\ $\g|_{\im\a}=\a^{-1}\b.$  Therefore, for each $x\in X$ we have $(x\a)\g=(x\a)\a^{-1}\b=x\b,$ so that $\b=\a\g\in S.$  Thus $\BL_X\subseteq S.$

(4)$\Rightarrow$(1).  Since $S$ contains $\BL_X$, which is $|X|$-transitive, $S$ is $|X|$-transitive.

We now prove that $\w_S^r$ is finitely generated with $D_r(S)\leq4.$  By assumption, there exists a finite subset $V\subset S$ such that $S^r=VS^1$.  Let $K=\BL_X$, and recall that $\ha,\hb\in K.$  Letting $U=V\cup\{\ha,\hb\},$ we shall prove that $\w_S^r=\l U\r$ with $D_r(S,U)\leq 4.$ 

So, let $\th,\phi\in S.$  We claim that there exist $\th',\phi'\in K$ such that the pairs $(\th,\th'),(\phi',\phi)\in\w_S^r$ are each obtained by a single application of a pair from $U\times U,$ and $|X\!\setminus\!(\im\th'\cup\im\phi')|=|X|.$

Indeed, we have $\th=\g\s$ and $\phi=\d\tau$ for some $\g,\d\in V$ and $\s,\tau\in S^1.$  Let $\th'=\ha\s.$  Then clearly $(\th,\th')$ is obtained by a single application of the pair $(\g,\ha)\in U\times U.$  Now, since $\im\ha\cap\im\hb=\emptyset$ and $\tau$ is injective, we have $\im\ha\tau\cap\im\hb\tau=\emptyset.$
Thus,
$$(X\!\setminus\!\im\th')\cap\im\ha\tau\subseteq X\!\setminus\!(\im\th'\cup\im\hb\tau).$$
Therefore, we have
\begin{align*}
X\!\setminus\!\im\th'&=X\!\setminus\!(\im\th'\cup\im\ha\tau)\cup\big((X\!\setminus\!\im\th')\cap\im\ha\tau\big)\\
&\subseteq X\!\setminus\!(\im\th'\cup\im\ha\tau)\cup X\!\setminus\!(\im\th'\cup\im\hb\tau).
\end{align*}
Since $|X\!\setminus\!\im\th'|=|X|$ (as $\th'\in K$), it follows that either
$$|X\!\setminus\!(\im\th'\cup\im\ha\tau)|=|X|\quad\text{or}\quad|X\!\setminus\!(\im\th'\cup\im\hb\tau)|=|X|.$$
If $|X\!\setminus\!(\im\th'\cup\im\ha\tau)|=|X|,$ we set $\phi'=\ha\tau$; otherwise, we set $\phi'=\hb\tau.$  Then $(\phi',\phi)$ is obtained by a single application of either $(\ha,\d)$ or $(\hb,\d),$ and $|X\!\setminus\!(\im\th'\cup\im\phi')|=|X|.$  This completes the proof of the claim.

Now, by Lemma \ref{lem:BL}, there exists an $(\ha,\hb)$-sequence from $\th'$ to $\phi'$ of length 2 in $K\subseteq S.$  It follows that there is a $U$-sequence from $\th$ to $\phi$ of length 4.  Hence, $D_r(S,U)\leq4,$ as desired.\\
~\\
Now, to prove the lower bound of 3 for $D_r(S),$ suppose for a contradiction that $D_r(S,U)\leq2$ for some finite set $U\subseteq S\times S.$  

We say that a pair $(\g,\d)\in S\times S$ is {\em disjoint} if $\im\g\cap\im\d=\emptyset,$ and {\em intersecting} otherwise.  We may assume that $U$ contains an intersecting pair, for otherwise we can add such a pair to $U.$  Let $\{(\g_i,\d_i) : 1\leq i\leq n\}$ be the set of intersecting pairs in $U.$  For each $i\in\{1,\dots,n\}$ choose $x_i\in X$ such that $x_i\g_i\in\im\d_i$, and let $y_i=x_i\g_i\d_i^{-1}$.  Now let 
$$Q=\big\{(j,k) : j,k\in\{1,\dots,n\},\, y_j\in\im\d_k\g_k^{-1}\big\}.$$
Choose 
$$w_1,\dots,w_n\in X\!\setminus\!\{y_i,\, y_j\g_k\d_k^{-1} : 1\leq i\leq n,\, (j,k)\in Q\}$$ such that $w_i=w_j$ if and only if $x_i=x_j$.  Fix any $\th\in\BL_X~(\subseteq S),$ and note that $|\!\im\th|=|X\!\setminus\!\im\th|=|X|.$  Choose
$$z_1,\dots,z_n\in\im\th\!\setminus\!\{x_i\th : 1\leq i\leq n\}$$
such that $z_i=z_j$ if and only if $y_i=y_j$.  Note that the sets 
$$A=\{w_i, y_i : 1\leq i\leq n\}\quad\text{and}\quad B=\{x_i\th, z_i : 1\leq i\leq n\}$$ are finite and have the same cardinality.  Choose a bijection 
$$\lam : X\!\setminus\!(\im\th\cup A)\to X\!\setminus\!(\im\th\cup B),$$
and extend $\lam$ to a bijection 
$$\lam' : (X\!\setminus\!\im\th)\cup A\to(X\!\setminus\!\im\th)\cup B$$
by setting $w_i\lam'=x_i\th$ and $y_i\lam'=z_i$ ($1\leq i\leq n$).  We have
$$X\!\setminus\!\dom\lam'=\im\th\!\setminus\!A\quad\text{and}\quad X\!\setminus\!\im\lam'=\im\th\!\setminus\!B.$$
Since $|\im\th|=|X|$ and $A$ and $B$ are finite, we have
$$|X\!\setminus\!\dom\lam'|=|X\!\setminus\!\im\lam'|=|X|.$$
Since $S$ is $|X|$-transitive, there exists some $\phi\in S$ extending $\lam'.$  Note that $\im\th\cup\im\phi=X.$
Since $D_r(S,U)\leq2,$ there exists a $U$-sequence
$$\th=\g\s,\; \d\s=\g'\s',\; \d'\s'=\phi.$$
First suppose that $(\g,\d)$ and $(\g',\d')$ are disjoint pairs.  If the pair $(\th,\d\s)$ were intersecting, then there would exist $x,y\in X$ such that $x\th=(x\g)\s=y\d\s,$ but then $x\g=y\d$ since $\s$ is injective, contradicting that $(\g,\d)$ is disjoint.  Thus $(\th,\d\s)$ is disjoint, and similarly $(\d\s,\phi)=(\g'\s',\phi)$ is disjoint.  Thus, we have $$\im\d\s\cap(\im\th\cup\im\phi)=\emptyset.$$ 
But $\im\th\cup\im\phi=X,$ so we have a contradiction. 
We conclude that at least one of $(\g,\d)$ and $(\g',\d')$ is intersecting.  

Suppose first that $(\g,\d)$ is intersecting, so that $(\g,\d)=(\g_j,\d_j)$ for some $j\in\{1,\dots,n\}.$  We then have
$$y_j\g'\s'=y_j\d_j\s=x_j\g_j\s=x_j\th=w_j\phi=w_j\d'\s'.$$
Since $\s'$ is injective, we have $y_j\g'=w_j\d'.$  Thus $(\g',\d')$ is intersecting, so that $(\g',\d')=(\g_k,\d_k)$ for some $k\in\{1,\dots,n\}.$  Hence $y_j\g_k=w_j\d_k.$  But then $(j,k)\in Q$ and $w_j=y_j\g_k\d_k^{-1} $, contradicting the choice of $w_j$.  

Now suppose that $(\g',\d')$ is intersecting, so that $(\g',\d')=(\g_l,\d_l)$ for some $l\in\{1,\dots,n\}.$  Since $z_l\in\im\th,$ there exists some $x\in X$ such that $z_l=x\th.$  Thus, we have
$$x\g\s=x\th=z_l=y_l\phi=y_l\d_l\s'=x_l\g_l\s'=x_l\d\s.$$
Since $\s$ is injective, it follows that $x\g=x_l\d.$  But it has already been established that $(\g,\d)$ is not intersecting, so we have a contradiction.  Thus $D_r(S)\geq3.$  This completes the proof of (4)$\implies$(1) and hence of the proposition.
\end{proof}

It follows immediately from Proposition \ref{prop:Inj,BL} that each of $\BL_X$, $\BL_X^1$, $\Sym_X\cup\BL_X$ and $\Inj_X$ has right diameter either 3 or 4.  We shall prove that the former two have right diameter 3 and the latter two have right diameter 4.

Since $\Inj_X\!\setminus\!\BL_X$ is a submonoid of $\Inj_X$, for any subsemigroup $S$ of $\Inj_X$ we have that $S\!\setminus\!\BL_X$ is a (possibly empty) subsemigroup of $S.$  If $S\!\setminus\!\BL_X$ is finite and non-empty, then it is a subgroup of $\Sym_X$; this follows from the fact that $\Inj_X\!\setminus\!\Sym_X$ contains no idempotents.  

\begin{thm}\label{thm:BL} 
If $S$ is a semigroup such that $\BL_X\leq S\leq\Inj_X$ and $S\!\setminus\!\BL_X$ is finite, then $D_r(S)=3.$
In particular, $\BL_X$ and $\BL_X^1$ have right diameter 3.
\end{thm}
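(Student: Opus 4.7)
By Proposition~\ref{prop:Inj,BL}, $D_r(S)\in\{3,4\}$: $S$ is $|X|$-transitive because $\BL_X\sub S$ and $\BL_X$ is itself $|X|$-transitive, while $S^r$ is finitely generated by $\{\ha\}\cup(S\sm\BL_X)$, using right simplicity of $\BL_X$ to factor each element of $\BL_X$ as $\ha s$ with $s\in\BL_X^1$. The lower bound $D_r(S)\geq 3$ is built into that proposition, so it remains to exhibit a finite $V\sub S$ with $D_r(V\times V,S)\leq 3$. We take $V=\{\ha,\hb\}\cup(S\sm\BL_X)$, which is finite by hypothesis; the resulting diameter bound will automatically give $\w_S^r=\l V\times V\r$ via Lemma~\ref{lem:sequence}.

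The heart of the argument is to extend Lemma~\ref{lem:BL} to length $3$: for any $\th,\phi\in\BL_X$ we aim to construct $\psi_2,\psi_3\in\BL_X$ with $\im\th\cap\im\psi_2=\im\psi_2\cap\im\psi_3=\im\psi_3\cap\im\phi=\emptyset$ and
\[
|X\sm(\im\th\cup\im\psi_2)|=|X\sm(\im\psi_2\cup\im\psi_3)|=|X\sm(\im\psi_3\cup\im\phi)|=|X|.
\]
Then $\th,\psi_2,\psi_3,\phi$ is an $(\ha,\hb,3)$-inducing sequence, and Lemma~\ref{lem:inducing,right} converts it into a length-$3$ $(\ha,\hb)$-sequence in $\BL_X$. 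Write $A=\im\th$ and $B=\im\phi$. If $|X\sm(A\cup B)|=|X|$, we partition $X\sm(A\cup B)$ into three subsets $C,D,E$ each of cardinality $|X|$ and take $\im\psi_2=C$, $\im\psi_3=D$, with $E$ witnessing each complement condition. Otherwise $|X\sm(A\cup B)|<|X|$; combined with $|X\sm A|=|X\sm B|=|X|$ this forces $|B\sm A|=|X|$. We split $B\sm A=C\sqcup R$ with $|C|=|R|=|X|$, choose $D\sub X\sm B$ with $|D|=|X|$ and $|(X\sm B)\sm D|=|X|$, and take $\im\psi_2=C$, $\im\psi_3=D$. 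Then $R\sub B\sm A$ lies in both $X\sm(A\cup C)$ and $X\sm(C\cup D)$ (using $D\sub X\sm B$), verifying the first two complement conditions, while $(X\sm B)\sm D$ verifies the third; the disjointness conditions follow from $C\sub B$ and $D\cap B=\emptyset$.

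Given this, the remaining cases are immediate. For $\th\in S\sm\BL_X$ and $\phi\in\BL_X$, the disjoint decomposition $X\sm\im\phi=(\im\ha\sm\im\phi)\cup(\im\hb\sm\im\phi)$, of cardinality $|X|$, forces at least one summand to have cardinality $|X|$; pick $v\in\{\ha,\hb\}$ accordingly. The single step $\th=\th\cdot 1$, $v\cdot 1=v$ using the pair $(\th,v)\in V\times V$, followed by a length-$2$ sequence from $v$ to $\phi$ furnished by Lemma~\ref{lem:BL}, gives a total length of $3$. The case $\th\in\BL_X$, $\phi\in S\sm\BL_X$ is symmetric, and if $\th,\phi\in S\sm\BL_X$ the pair $(\th,\phi)\in V\times V$ already suffices.

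The principal obstacle is the second case of the middle paragraph: when $|X\sm(A\cup B)|<|X|$, one must simultaneously satisfy three complement conditions and three disjointness conditions while $A\cup B$ nearly fills $X$; the observation that $B\sm A$ is forced to have cardinality $|X|$ provides a reservoir $R$ doing double duty for the first two complement conditions, and placing $\im\psi_3$ in $X\sm B$ then keeps everything disjoint and $\psi_2,\psi_3$ in $\BL_X$.
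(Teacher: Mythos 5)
Your proposal is correct and follows essentially the same route as the paper: both reduce the lower bound to Proposition~\ref{prop:Inj,BL}, take the generating set $(S\setminus\BL_X)\cup\{\ha,\hb\}$, dispatch the mixed and easy cases via Lemma~\ref{lem:BL}, and in the hard case ($\th,\phi\in\BL_X$ with $|X\setminus(\im\th\cup\im\phi)|<|X|$) route through an intermediate map whose image sits inside $\im\phi\setminus\im\th$. The only difference is organizational: you package the hard case as a single explicit length-$3$ inducing sequence (with second intermediate image in $X\setminus\im\phi$), whereas the paper performs one explicit step and then invokes Lemma~\ref{lem:BL} for the remaining two -- the underlying combinatorics is identical.
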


\begin{proof} 
By Proposition \ref{prop:Inj,BL}, it suffices to prove that $D_r(S)\leq 3.$  

Let $U=(S\!\setminus\!\BL_X)\cup\{\ha,\hb\}.$  Certainly $U$ is finite since $S\!\setminus\!\BL_X$ is finite.  We shall prove that $\w_S^r=\l U\r$ and $D_r(S,U)\leq3.$

So, consider $\th,\phi\in S.$  If $\th,\phi\in S\!\setminus\!\BL_X$, then $\th,\phi\in U,$ so clearly there is a $U$-sequence from $\th$ to $\phi$ of length 1.   Assume then that $\th\in\BL_X$, and suppose first that $\phi\in S\!\setminus\!\BL_X$.  Since $X=\im\ha\cup\im\hb$ and $|X\!\setminus\!\im\th|=|X|,$ it follows that either $|X\!\setminus\!(\im\th\cup\im\ha)|=|X|$ or $|X\!\setminus\!(\im\th\cup\im\hb)|=|X|.$  Assume without loss of generality that $|X\!\setminus\!(\im\th\cup\im\ha)|=|X|.$  Then, by Lemma \ref{lem:BL}, there exists an $(\ha,\hb)$-sequence from $\th$ to $\ha$ of length 2.  Since $\ha,\phi\in U$, we conclude that there is a $U$-sequence from $\th$ to $\phi$ of length 3.

Finally, suppose that $\phi\in\BL_X$.
If $|X\!\setminus\!(\im\th\cup\im\phi)|=|X|,$ then by Lemma \ref{lem:BL} there exists an $(\ha,\hb)$-sequence of length 2 from $\th$ to $\phi.$  Suppose then that $|X\!\setminus\!(\im\th\cup\im\phi)|<|X|.$  Let $Y=\im\phi\!\setminus\!\im\th.$  Since $$X\!\setminus\!\im\th=\bigl(X\!\setminus\!(\im\th\cup\im\phi)\bigr)\cup Y$$
and $|X\!\setminus\!\im\th|=|X|,$ it follows that $|Y|=|X|.$  Let $\lam : X\to Y$ be an injection such that $|Y\!\setminus\!\im\lam|=|X|,$ and let $\g=\hg(\th,\lam).$  Clearly $\g$ is an injection (and hence $|\im\g|=|X|$).  Also, we have
$$Y\!\setminus\!\im\lam\subseteq X\!\setminus\!(\im\th\cup\im\lam)=X\!\setminus\!\im\g.$$
Since $|Y\!\setminus\!\im\lam|=|X|,$ it follows that $|X\!\setminus\!\im\g|=|X|.$  Thus $\g\in S.$  Recall that $\th=\ha\g$ and $\lam=\hb\g.$
Now, since $\im\lam\subseteq Y\subseteq\im\phi,$ we have that 
$$|X\!\setminus\!(\im\lam\cup\im\phi)|=|X\!\setminus\im\phi|=|X|.$$
Therefore, by Lemma \ref{lem:BL}, there exists an $(\ha,\hb)$-sequence from $\lam$ to $\phi$ of length 2.  Hence, we have an $(\ha,\hb)$-sequence from $\th$ to $\phi$ of length 3.  This completes the proof.
\end{proof}

Next, we show that a subsemigroup $S$ of $\Inj_X$ such that $S\!\setminus\!\BL_X$ is finitely transitive cannot have right diameter strictly less than 4.

\begin{prop}\label{prop:S,Inj}
Let $S$ be a subsemigroup of $\Inj_X$ such that $S\setminus\BL_X$ is finitely transitive.  If $\w_S^r$ is finitely generated, then $D_r(S)\geq4$.
\end{prop}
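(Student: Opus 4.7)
The plan is to argue by contradiction: suppose $\w_S^r$ is generated by a finite $U\subseteq S$ with $D_r(S,U)\leq 3$, and construct $\th,\phi\in S\setminus\BL_X$ that cannot be connected by any $U$-sequence of length at most $3$. The approach refines the length-$2$ lower-bound argument in Proposition~\ref{prop:Inj,BL}, using the finite transitivity of $S\setminus\BL_X$ to build the obstructing pair.

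The fundamental observation driving the argument is an ``endpoint intersecting'' lemma. If $\th\in S\setminus\BL_X$ and
\[\th=\g_1\s_1,\ \d_1\s_1=\g_2\s_2,\ \ldots,\ \d_k\s_k=\phi\]
is any $U$-sequence, then necessarily $\im\g_1\cap\im\d_1\neq\emptyset$; otherwise, injectivity of $\s_1$ gives $\im(\d_1\s_1)\subseteq X\setminus\im\th$, but $|\im(\d_1\s_1)|=|X|$ while $|X\setminus\im\th|<|X|$, a contradiction. Dually, $(\g_k,\d_k)$ is intersecting whenever $\phi\in S\setminus\BL_X$. For a length-$3$ derivation, the same cardinality argument applied to the middle step shows that if $(\g_2,\d_2)$ is disjoint, then both intermediate elements $\d_1\s_1$ and $\d_2\s_2$ lie in $\BL_X$; combined with the identity $|X\setminus\im(\g\s)|=|X\setminus\im\g|+|X\setminus\im\s|$ for $\g,\s\in\Inj_X$, and the fact that $\th,\phi\in S\setminus\BL_X$ forces $\s_1,\s_3\notin\BL_X$, one concludes $\d_1,\g_3\in\BL_X$.

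I would then exploit finite transitivity of $S\setminus\BL_X$ to specify $\th,\phi\in S\setminus\BL_X$ by their values on a carefully chosen finite set of witnesses. For each intersecting pair $(u,v)\in U\times U$, pick $x_{u,v}$ with $x_{u,v}u\in\im v$ and set $y_{u,v}=x_{u,v}uv^{-1}$, as in Proposition~\ref{prop:Inj,BL}. To obstruct length-$3$ derivations we augment this with witnesses coming from two-step ``compositions'' $u_1v_1^{-1}u_2v_2^{-1}$ (there are only finitely many such compositions since $U$ is finite), and also with the specific points that would have to be matched in a length-$3$ derivation whose middle pair is disjoint and whose endpoint pairs use a $\BL_X$-element. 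We then use finite transitivity of $S\setminus\BL_X$ to choose $\th,\phi\in S\setminus\BL_X$ whose behaviour on all of these (finitely many) witnesses is generic, in the sense that the forced identifications of the form $x\th=y\phi$ coming from any length-$\leq 3$ derivation are incompatible with the chosen values.

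Finally, one verifies that no length-$3$ $U$-sequence from $\th$ to $\phi$ exists by splitting into two cases. Case (i), when all three pairs are intersecting, directly extends the witness-chasing of Proposition~\ref{prop:Inj,BL}: composing through three intersecting pairs forces a chain of witness identifications which cannot all hold simultaneously for our chosen $\th,\phi$. Case (ii), when the middle pair is disjoint, is handled by combining the forced $\d_1,\g_3\in\BL_X$ (and the intersecting endpoint pairs) with the augmented length-$2$ witnesses to produce a contradiction. This second case is the main obstacle: a length-$3$ derivation is permitted to ``dip into $\BL_X$'' in the middle, so the pointwise obstruction used in Proposition~\ref{prop:Inj,BL} must be tracked through the $\BL_X$-intermediates, and this is precisely what the enlarged witness set is designed to do.
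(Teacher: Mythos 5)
Your overall framework -- fix a finite $U$, build a finite witness set, use finite transitivity of $S\setminus\BL_X$ to construct an obstructing pair $\th,\phi\in S\setminus\BL_X$, and note that $\BL_X$ being an ideal forces the outermost generators of any derivation to lie in $S\setminus\BL_X$ -- is exactly the paper's, and your auxiliary observations (the endpoint pairs must have intersecting images; a disjoint middle pair forces $\d_1,\g_3\in\BL_X$) are correct. But the core mechanism for extracting a contradiction from a length-$3$ sequence is missing, and the one you sketch for the all-intersecting case does not work. You propose to ``compose through'' the three pairs and obstruct the resulting chain of identifications, but a pair $(\g_2,\d_2)$ being intersecting only guarantees that \emph{some} point of $\im\g_2$ lies in $\im\d_2$; it does not guarantee that your pre-chosen witness, pushed one step in as $x\g_1\d_1^{-1}\g_2$, lands in $\im\d_2$. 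If it does not, the chain breaks, no identity of the form $x\th=y\phi$ is forced, and your witnesses yield no contradiction. Witnesses ``coming from two-step compositions $u_1v_1^{-1}u_2v_2^{-1}$'' run into the same problem: the domain of such a composition can be empty. Your Case (ii) is explicitly deferred to an unspecified ``enlarged witness set,'' so it is not actually carried out either.

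The missing idea is to meet in the middle rather than chain all the way across, which eliminates your case analysis entirely. For each tuple $p=(\a_1,\b_1,\a_2,\b_2,\a_3,\b_3)\in U^6$ with $\a_1,\b_3\in S\setminus\BL_X$, the sets $\im\b_1\a_1^{-1}$ and $\im\a_3\b_3^{-1}$ have cardinality $|X|$ (because $|X\setminus\im\a_1|,|X\setminus\im\b_3|<|X|$), so one can choose $x_p\in\im\b_1\a_1^{-1}$ and $y_p\in\im\a_3\b_3^{-1}$, all distinct, with $x_p\a_1\b_1^{-1}\a_2\neq y_p\b_3\a_3^{-1}\b_2$; note that each side only requires crossing \emph{one} pair from its own end, so it is always defined, with no hypothesis on $(\a_2,\b_2)$. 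Finite transitivity then gives $\th,\phi\in S\setminus\BL_X$ with $x_p\th=x_p=y_p\phi$ for all $p$. A length-$3$ sequence $\th=\a_1\g_1$, $\b_1\g_1=\a_2\g_2$, $\b_2\g_2=\a_3\g_3$, $\b_3\g_3=\phi$ then yields
$$x_p\a_1\b_1^{-1}\a_2\g_2=x_p\th=x_p=y_p\phi=y_p\b_3\a_3^{-1}\b_2\g_2,$$
and cancelling the injective $\g_2$ contradicts the choice of witnesses, whether the middle pair is intersecting or not. Without this (or some equivalent device), your plan does not close.
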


\begin{proof}
Suppose for a contradiction that $D_r(S,U)\leq3$ for some finite set $U\subseteq S.$  Let $P$ denote the (finite) collection of all tuples $(\a_1,\b_1,\a_2,\b_2,\a_3,\b_3)\in U^6$ where $\a_1,\b_3\in S\setminus\BL_X$.  Observe that for any $\a\in S\!\setminus\!\BL_X$ and $\b\in S,$ since $|X\!\setminus\!\im\a|<|X|$ we have $|\!\im\a\cap\im\b|=|X|$, or, equivalently, in $\I_X$ we have $|\!\im\a\b^{-1}|=|\!\im\b\a^{-1}|=|X|.$  Therefore, we may choose a set of distinct elements $\{x_p,y_p : p\in P\}$ such that, for each $p=(\a_1,\dots,\b_3)\in P,$
in $\I_X$ we have $x_p\in\im\b_1\a_1^{-1}$, $y_p\in\im\a_3\b_3^{-1}$ and $x_p\a_1\b_1^{-1}\a_2\neq y_p\b_3\a_3^{-1}\b_2$.

Since $S\!\setminus\!\BL_X$ is finitely transitive, there exist $\th,\phi\in S\!\setminus\!\BL_X$ such that $x_p\th=x_p$ and $y_p\phi=x_p$ for all $p\in P.$  As $D_r(S,U)\leq3,$ there exists a $U$-sequence
$$\th=\a_1\g_1,\,\b_1\g_1=\a_2\g_2,\,\b_2\g_2=\a_3\g_3,\,\b_3\g_3=\phi.$$
Since $\BL_X$ is an ideal of $S,$ it follows that $\a_1,\b_3\in S\!\setminus\!\BL_X$, so that $(\a_1,\dots,\b_3)\in P.$  Letting $(\a_1,\dots,\b_3)=p,$ we have 
\begin{align*}
x_p\a_1\b_1^{-1}\a_2\g_2=x_p\a_1\b_1^{-1}\b_1\g_1=x_p\a_1\g_1=x_p\th=x_p=y_p\phi=y_p\b_3\g_3
&=y_p\b_3\a_3^{-1}\a_3\g_3\\
&=y_p\b_3\a_3^{-1}\b_2\g_2.
\end{align*}
But then, since $\g_2$ is injective, we have $x_p\a_1\b_1^{-1}\a_2=y_p\b_3\a_3^{-1}\b_2$, contradicting the choice of $x_p$ and $y_p$.  Thus $D_r(S)\geq4.$
\end{proof}

If $S$ is a subsemigroup of $\Inj_X$ such that $S\!\setminus\!\BL_X$ is $|X|$-transitive, then certainly $S$ is $|X|$-transitive and $S\!\setminus\!\BL_X$ is finitely transitive.  Thus, by Propositions \ref{prop:Inj,BL} and \ref{prop:S,Inj}, we have:

\begin{thm}\label{thm:S,Inj}
For a subsemigroup $S$ of $\Inj_X$ such that $S\!\setminus\!\BL_X$ is $|X|$-transitive, the following are equivalent:
\begin{enumerate}
\item $\w_S^r$ is finitely generated and $D_r(S)=4$;
\item $\w_S^r$ is finitely generated;
\item $S^r$ is finitely generated and $S\cap\BL_X\neq\emptyset$;
\item $S^r$ is finitely generated and $S$ contains $\BL_X$.
\end{enumerate}
\end{thm}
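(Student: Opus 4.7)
The plan is to present Theorem \ref{thm:S,Inj} as a direct synthesis of the two preceding Propositions \ref{prop:Inj,BL} and \ref{prop:S,Inj}, with essentially no new technical content required. The key preliminary observation is that the hypothesis ``$S\setminus\BL_X$ is $|X|$-transitive'' simultaneously delivers two distinct consequences needed as inputs to those propositions, namely that $S$ itself is $|X|$-transitive and that $S\setminus\BL_X$ is finitely transitive.

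First I would invoke the remarks following the definition of $\k$-transitivity: since $S\setminus\BL_X\subseteq S$, part (3) of that remark gives $|X|$-transitivity of $S$; and since $|X|$-transitivity of $S\setminus\BL_X$ implies $\mu$-transitivity for every finite $\mu<\al$ by part (1), $S\setminus\BL_X$ is finitely transitive. With these two facts in hand, the implications (2)$\Leftrightarrow$(3)$\Leftrightarrow$(4), along with the conclusion $D_r(S)\in\{3,4\}$ under assumption (2), are exactly the content of Proposition \ref{prop:Inj,BL} applied to $S$.

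To upgrade the range $\{3,4\}$ to the exact value $4$ needed in (1), I would then apply Proposition \ref{prop:S,Inj}: under the finite transitivity of $S\setminus\BL_X$ established above, finite generation of $\w_S^r$ forces $D_r(S)\geq 4$. Combined with $D_r(S)\leq 4$ from Proposition \ref{prop:Inj,BL}, this gives $D_r(S)=4$, proving (2)$\Rightarrow$(1). The reverse (1)$\Rightarrow$(2) is immediate, closing the chain.

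I do not expect any genuine obstacle here, since all of the substantive work has already been carried out in the two cited propositions; the only mild bookkeeping concern is to note explicitly the implication ``$|X|$-transitive $\Rightarrow$ $|X|$-transitive as a subsemigroup of a larger semigroup, and $\Rightarrow$ finitely transitive,'' which is an immediate consequence of the definitions and the remarks following them. The proof should therefore fit in a few lines.
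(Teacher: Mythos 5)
Your proposal is correct and is essentially identical to the paper's own argument: the paper likewise observes that $|X|$-transitivity of $S\setminus\BL_X$ gives both $|X|$-transitivity of $S$ and finite transitivity of $S\setminus\BL_X$, and then simply cites Propositions \ref{prop:Inj,BL} and \ref{prop:S,Inj}. No issues.
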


If $S$ is a subsemigroup of $\Inj_X$ containing $\Sym_X$, then certainly $S^r$ is finitely generated (since $S$ is a monoid) and $S\!\setminus\!\BL_X$ is $|X|$-transitive (since it contains $\Sym_X$, which is $|X|$-transitive).  Thus, we deduce:

\begin{thm}\label{thm:Sym,Inj}
For a monoid $S$ such that $\Sym_X\leq S\leq\Inj_X$, the following are equivalent:
\begin{enumerate}
\item $\w_S^r$ is finitely generated and $D_r(S)=4$;
\item $\w_S^r$ is finitely generated;
\item $S\cap\BL_X\neq\emptyset$;
\item $S$ contains $\BL_X$.
\end{enumerate}
Consequently, the monoids $\Sym_X\cup\BL_X$ and $\Inj_X$ have right diameter 4.
\end{thm}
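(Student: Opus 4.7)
The plan is to derive Theorem \ref{thm:Sym,Inj} as an immediate specialisation of Theorem \ref{thm:S,Inj}. Two observations are needed. First, since $S$ is a monoid, $S=\{1\}S^1$, so $S^r$ is finitely generated by the singleton $\{1\}$; thus the clause ``$S^r$ is finitely generated'' that appears in conditions (3) and (4) of Theorem \ref{thm:S,Inj} is automatic here. Secondly, I would observe that $\Sym_X\cap\BL_X=\emptyset$ (since every element of $\BL_X$ has $|X\!\setminus\!\im\a|=|X|$ and hence is not surjective), so from $\Sym_X\leq S$ we get $\Sym_X\subseteq S\!\setminus\!\BL_X$. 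Since $\Sym_X$ is $|X|$-transitive (given any partial bijection $\lam\in\I_X$ with $|X\!\setminus\!\dom\lam|=|X\!\setminus\!\im\lam|=|X|$, one can obviously extend $\lam$ to a bijection of $X$), so is any supersemigroup of it inside $\Inj_X$; in particular $S\!\setminus\!\BL_X$ is $|X|$-transitive.

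With these two reductions in hand, the equivalence of the four conditions in Theorem \ref{thm:Sym,Inj} is just the equivalence of the four conditions in Theorem \ref{thm:S,Inj} applied to this $S$: conditions (1) and (2) are literally the same, while (3) and (4) match because the ``$S^r$ finitely generated'' qualifier is free in the monoid setting.

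For the final ``consequently'' clause, I would check conditions (4) for the two specific monoids. Observe that $\Sym_X\cup\BL_X$ is indeed a monoid with $\Sym_X\leq\Sym_X\cup\BL_X\leq\Inj_X$, and it trivially contains $\BL_X$, so condition (4) is satisfied and $D_r(\Sym_X\cup\BL_X)=4$ by the established equivalence. Likewise $\Inj_X$ is a monoid with $\Sym_X\leq\Inj_X$, and of course $\BL_X\subseteq\Inj_X$, so again (4) holds and $D_r(\Inj_X)=4$.

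There is no real obstacle here, since the hard work has already been done in Proposition \ref{prop:Inj,BL} and Proposition \ref{prop:S,Inj} (which together produced Theorem \ref{thm:S,Inj}). The only mild point of care is to verify that $\Sym_X$ genuinely witnesses $|X|$-transitivity of $S\!\setminus\!\BL_X$; this is where the disjointness $\Sym_X\cap\BL_X=\emptyset$ enters, so that $\Sym_X$ actually lies in the complement and its transitivity can be transferred.
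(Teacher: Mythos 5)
Your proposal is correct and follows essentially the same route as the paper: the authors likewise deduce Theorem \ref{thm:Sym,Inj} directly from Theorem \ref{thm:S,Inj} by noting that $S^r$ is finitely generated because $S$ is a monoid, and that $S\setminus\BL_X$ is $|X|$-transitive because it contains $\Sym_X$. Your extra remarks (that $\Sym_X\cap\BL_X=\emptyset$ so $\Sym_X$ really sits inside $S\setminus\BL_X$, and the verification of condition (4) for the two named monoids) are correct details that the paper leaves implicit.
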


\begin{rem}
For any non-empty set $I$ of infinite cardinals $q\leq|X|,$ the set $S=\bigcup_{q\in I}\BL_{X,q}$ is an $|X|$-transitive subsemigroup of $\Inj_X$.  Moreover, we have $S^r=\a S^1$ for any $\a\in\BL_{X,q_0}$, where $q_0$ is the smallest cardinal in $I.$  If $I$ contains $|X|$ and at least one other cardinal $q<|X|,$  then, by Theorem \ref{thm:S,Inj}, the universal right congruence $\w_S^r$ is finitely generated and $D_r(S)=4.$
\end{rem}

\section{Transformation Semigroups: Left Diameter}
\label{sec:left}

This section has a parallel structure to Section \ref{sec:right}; that is, it naturally splits into three parts, correponding to questions (Q4), (Q5) and (Q6) of Section \ref{subsec:summary}.

So, we begin by considering which of the transformation semigroups $S$ appearing in Table \ref{table:results} are finitely generated as left ideals.  Of course, this holds if $S$ is a monoid or left simple.  Also, it is fairly straightforward to show that $\T_X\!\setminus\!\Inj_X$ is generated as a left ideal of itself by any $\a\in\Surj_X\!\setminus\!\Sym_X$ (in fact, we shall see that $\T_X\!\setminus\!\Inj_X$ has left diameter 2).  The remaining semigroups ($\T_X\!\setminus\!\Surj_X$ and $\BL_{X,q}$, $\al\leq q\leq|X|$) are dealt with by the following result.

\begin{thm}\label{thm:lnotfg}
If $S$ is a finitely transitive subsemigroup of $\T_X\!\setminus\!\Surj_X$, then $S^l$ is not finitely generated.  In particular, the semigroups $\T_X\!\setminus\!\Surj_X$ and $\BL_{X,q}$ \emph{(}$\al\leq q\leq|X|$\emph{)} are not finitely generated as left ideals of themselves. 
\end{thm}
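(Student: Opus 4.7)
The statement is the precise left-right dual of Theorem~\ref{thm:rnotfg}, so the plan is to dualise that argument, replacing the use of non-injectivity and kernels with non-surjectivity and images. Specifically, the principal left ideal $S^1\a$ consists of elements factoring through $\a$ on the right, and the dual of the implication ``$\theta\in\a S^1\Rightarrow\ker\a\subseteq\ker\theta$'' used in Theorem~\ref{thm:rnotfg} is the implication
\[
\theta\in S^1\a \implies \im\theta\subseteq\im\a.
\]
This will be the engine driving the non-membership argument.

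Given any finite $U\subseteq S$, for each $\a\in U$ I would use the hypothesis $S\subseteq\T_X\setminus\Surj_X$ to pick some $y_\a\in X\setminus\im\a$. Set $Y=\{y_\a:\a\in U\}$, which is finite. The goal is to produce $\theta\in S$ with $Y\subseteq\im\theta$, since then for every $\a\in U$ the image $\im\theta$ contains a point outside $\im\a$, forcing $\theta\notin S^1\a$ by the displayed implication above. Consequently $\theta\notin\bigcup_{\a\in U}S^1\a=US^1$... wait, more precisely $\theta\notin S^1U$, which shows $S\neq S^1U$ and hence $S^l$ is not finitely generated.

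To construct such a $\theta$, pick any finite set $Z\subseteq X$ with $|Z|=|Y|$ and any bijection $\lam\colon Z\to Y$; this $\lam$ lies in $\I_X$ with finite domain, so $|X\setminus\dom\lam|=|X\setminus\im\lam|=|X|$. Finite transitivity of $S$ then supplies $\theta\in S$ extending $\lam$, and from $\im\lam=Y$ we get $Y\subseteq\im\theta$ as required. There is no serious obstacle here; the only thing to note is the trivial point that $\theta$ must also differ from each $\a$ itself (to handle the $1\in S^1$ case), which is automatic because $y_\a\in\im\theta\setminus\im\a$ already shows $\im\theta\neq\im\a$.

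For the ``in particular'' clause, I would observe that $\T_X\setminus\Surj_X$ is itself manifestly finitely transitive (any partial bijection $\lam$ with finite domain can be extended to a non-surjective full mapping by collapsing $X\setminus\dom\lam$ into a proper subset of $X\setminus\im\lam$), and that for each infinite cardinal $q\leq|X|$ the Baer--Levi semigroup $\BL_{X,q}$ is a subsemigroup of $\T_X\setminus\Surj_X$ (its elements omit a set of size $q$ from the image) and is even $|X|$-transitive, since any partial bijection with finite domain can be extended to an injection whose image complement has size $q$ using that $X\setminus\im\lam$ has size $|X|\geq q$. Both assertions of the theorem then follow from the general statement.
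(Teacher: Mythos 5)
Your proposal is correct and follows essentially the same route as the paper: choose a point $y_\a\notin\im\a$ for each $\a$ in the putative finite generating set, use finite transitivity to obtain $\th\in S$ whose image contains all these points (the paper simply takes $\th$ fixing each such point), and conclude $\th\notin S^1\a$ from the implication $\th\in S^1\a\Rightarrow\im\th\subseteq\im\a$. The extra justifications you give for the ``in particular'' clause are accurate and match what the paper leaves implicit.
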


\begin{proof}
Consider any finite subset $U\subseteq S.$  For each $\a\in U$ choose $x_\a\in X\!\setminus\!\im\a$ (such an element exists because $\a$ is not surjective).  Since $S$ is finitely transitive, there exists $\th\in S$ such that $x_\a\th=x_\a$ for each $\a\in U.$  Then $\im\th\not\subseteq\im\a$ for all $\a\in U,$ and hence $\th\notin S^1\a$ for any $\a\in U,$ so $S\neq S^1U.$  Hence, $S^l$ is not finitely generated.
\end{proof}

We now consider which of the remaining transformation semigroups $S$ from Table \ref{table:results} have $\w_S^l$ not finitely generated.  First, we establish an analogue of Proposition \ref{prop:w^r}, followed by a technical lemma.

Let $S$ be a subsemigroup of $\T_X$.  For a subset $U\subseteq S,$ we define
$$\Sig'(U)=\{\a_1^{-1}\b_1\dots\a_k^{-1}\b_k : k\in\N,\,\a_i,\b_i\in U\,(1\leq i\leq k)\}\subseteq\B_X\!.$$
Observe that for any $\th,\phi\in\T_X$, in $\B_X$ we have 
$$\th^{-1}\phi=\{(x\th,x\phi) : x\in X\}.$$

\begin{prop}\label{prop:w^l}
Let $S$ be a subsemigroup of $\T_X$, and let $U\subseteq S$ be a generating set for the universal left congruence $\w_S^l$.  Then for any $\th,\phi\in S$ there exists $\s\in\Sig'(U)$ with $\th^{-1}\phi\subseteq\s$.
\end{prop}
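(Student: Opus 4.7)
The plan is to dualise the proof of Proposition \ref{prop:w^r}. Two ingredients are needed: first, the left-right dual of Lemma \ref{lem:sequence}, which says that a $U$-sequence witnessing $(\th,\phi) \in \w_S^l$ (for $\th\neq\phi$) has the form
\[
\th = s_1\a_1,\ s_1\b_1 = s_2\a_2,\ \dots,\ s_k\b_k = \phi,
\]
where $(\a_i,\b_i) \in U\times U$ (up to swapping) and $s_i \in S^1$; and second, the formula $\a^{-1}\b = \{(z\a,z\b) : z\in X\}$ for $\a,\b\in\T_X$, which follows immediately from the definition of relational composition (and is the relational dual of $\a\b^{-1} = \{(x,y) : x\a = y\b\}$ recorded just before Proposition \ref{prop:w^r}).

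I would first dispose of the case $\th = \phi$. By (the left-right dual of) Lemma \ref{lem:w} we have $S = S^1 U$, so $\th = s\a$ for some $\a \in U$ and $s \in S^1$, which forces $\im\th \subseteq \im\a$. Since $\a^{-1}\a = 1_{\im\a}$ and $\th^{-1}\th = 1_{\im\th}$, taking $\s := \a^{-1}\a \in \Sig'(U)$ gives $\th^{-1}\phi = \th^{-1}\th \subseteq \s$.

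For $\th\neq\phi$, I would fix the $U$-sequence above and set $\s := \a_1^{-1}\b_1 \cdots \a_k^{-1}\b_k \in \Sig'(U)$. To show $\th^{-1}\phi \subseteq \s$, take an arbitrary pair in $\th^{-1}\phi$; by the formula it has the form $(x\th, x\phi) = (xs_1\a_1, xs_k\b_k)$ for some $x \in X$. For each $i$, the choice $z_i := xs_i$ witnesses $(xs_i\a_i, xs_i\b_i) \in \a_i^{-1}\b_i$, and the chain identities $xs_i\b_i = xs_{i+1}\a_{i+1}$ allow these pairs to be composed (in the relational sense) to yield $(xs_1\a_1, xs_k\b_k) \in \a_1^{-1}\b_1 \cdots \a_k^{-1}\b_k = \s$, as required.

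I do not anticipate any genuine obstacle here: the argument is a transparent transposition of the proof of Proposition \ref{prop:w^r}. The only point needing care is making sure the relational composition of the $\a_i^{-1}\b_i$ is lined up correctly using the witnesses $z_i = xs_i$, which is where the convention $\a^{-1}\b = \{(z\a,z\b):z\in X\}$ (rather than $\a\b^{-1} = \{(x,y):x\a=y\b\}$) naturally produces the correct left-sided factorisations.
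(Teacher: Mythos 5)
Your argument is correct and is essentially identical to the paper's proof: the same case split on $\th=\phi$ versus $\th\neq\phi$, the same choice $\s=\a_1^{-1}\b_1\cdots\a_k^{-1}\b_k$, and the same step-by-step relational composition along the $U$-sequence using the identity $\a^{-1}\b=\{(z\a,z\b):z\in X\}$ (the paper writes the left multipliers as $\g_i$ where you write $s_i$). No gaps.
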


\begin{proof}
Let $\th,\phi\in S.$  Suppose first that $\th=\phi.$  Since $\w_S^l$ is generated by $U,$ there exists $\a\in U$ such that $\th\in S^1\a.$  Since $\im\th\subseteq\im\a,$ it follows that $\th^{-1}\phi=\th^{-1}\th\subseteq\a^{-1}\a\in\Sig'(U).$

Now suppose that $\th\neq\phi.$  Then there exists a $U$-sequence
$$\th=\g_1\a_1,\,\g_1\b_1=\g_2\a_2,\,\dots,\,\g_k\b_k=\phi.$$
Let $\s=\a_1^{-1}\b_1\dots\a_k^{-1}\b_k\in\Sig'(U).$  We claim that $\th^{-1}\phi\subseteq\s.$  So, let $(x,y)\in\th^{-1}\phi$.  Then there exists $z\in X$ such that $x=z\th$ and $y=z\phi.$
Then $x=(z\g_1)\a_1,$ so that $(x,z\g_1)\in\a_1^{-1}.$  Therefore, we have
$$(x,z\g_2\a_2)=(x,z\g_1\b_1)\in\a_1^{-1}\b_1.$$  
It follows that $(x,z\g_2)\in\a_1^{-1}\b_1\a_2^{-1},$ which in turn implies that $(x,z\g_2\b_2)\in\a_1^{-1}\b_1\a_2^{-1}\b_2.$  Continuing in this way, we obtain
$$(x,y)=(x,z\phi)=(x,z\g_k\b_k)\in\a_1^{-1}\b_1\dots\a_k^{-1}\b_k=\s,$$ 
as required. 
\end{proof}

\begin{lem}\label{lem:w^lnotfg}
Let $S$ be an $\al$-transitive subsemigroup of $\T_X$ satisfying the following condition: for any finite subset $U\subset S$ there are infinitely many $x\in X$ such that for each $\s\in\Sig'(U)$ the set $X\!\setminus\!x\s$ is infinite.  Then $\w_S^l$ is not finitely generated.
\end{lem}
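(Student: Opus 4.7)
The proof will proceed by contradiction, closely mirroring the strategy of Theorem \ref{thm:BL,w^rnotfg} but applied on the ``left-handed'' side via Proposition \ref{prop:w^l}. Suppose $\w_S^l$ is generated by a finite set $U \subseteq S$. Since $U$ is finite, the set $\Sig'(U)$ is countable, so we may enumerate it as $\Sig'(U) = \{\s_n : n \in \N\}$ (with repetitions allowed).

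Using the hypothesis, I will select a sequence $(x_n)_{n\in\N}$ of distinct elements of $X$ belonging to the ``good'' infinite set, i.e., such that $X \setminus x_n\s$ is infinite for every $\s \in \Sig'(U)$, and moreover so that $|X \setminus \{x_n : n \in \N\}| = |X|$. This is immediate when $X$ is uncountable; when $X$ is countably infinite the extraction needs a little care (analogous to the countable sub-case of the proof of Theorem \ref{thm:BL,w^rnotfg}): one can, for instance, order the good elements and take alternating ones to guarantee both the chosen set and its complement are infinite. Next, I construct $(v_n)_{n\in\N}$ inductively, choosing at step $n$ an element $v_n \in X \setminus x_n\s_n$ that avoids $\{v_1,\dots,v_{n-1}\}$ and, if $X$ is countable, satisfies an additional growth condition (e.g.\ $v_n \geq v_{n-1}+2$ under some enumeration of $X$) to ensure $|X \setminus \{v_n : n \in \N\}| = |X|$. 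The choice is always possible because $X \setminus x_n\s_n$ is infinite.

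Now the partial bijections
\[
\lam : x_n \mapsto x_n \AND \mu : x_n \mapsto v_n
\]
both have countable domains and complements of domain and image of cardinality $|X|$. By $\al$-transitivity of $S$, there exist $\th,\phi \in S$ extending $\lam$ and $\mu$ respectively, so that $x_n\th = x_n$ and $x_n\phi = v_n$ for all $n$. Applying Proposition \ref{prop:w^l} to $\th,\phi$ yields some $\s \in \Sig'(U)$ with $\th^{-1}\phi \subseteq \s$. Say $\s = \s_N$ in our enumeration. Since $(x_N\th, x_N\phi) = (x_N, v_N) \in \th^{-1}\phi \subseteq \s_N$, we obtain $v_N \in x_N\s_N$, contradicting the choice $v_N \in X \setminus x_N\s_N$. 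Hence $\w_S^l$ cannot be finitely generated.

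The heart of the argument lies in the interplay between Proposition \ref{prop:w^l} (which forces a single $\s \in \Sig'(U)$ to ``cover'' every pair $(x\th, x\phi)$) and the hypothesis (which provides enough freedom to sabotage this covering at the $n$-th candidate $\s_n$ by choosing $v_n$ outside $x_n\s_n$). The only genuinely technical point is the bookkeeping needed to ensure that $\{x_n\}$ and $\{v_n\}$ have complements of size $|X|$ so that $\al$-transitivity can be invoked; this is a routine ``diagonal'' construction analogous to that used in the proof of Theorem \ref{thm:BL,w^rnotfg} and poses no real obstacle.
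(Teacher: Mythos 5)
Your proof is correct and follows essentially the same route as the paper's: both argue by contradiction, pick a countable family of ``good'' points $x_\s$ (one per element of $\Sig'(U)$) with colarge complement, choose witnesses $y_\s\notin x_\s\s$ also forming a colarge-complement set, extend the resulting partial bijections to $\th,\phi\in S$ by $\al$-transitivity, and derive a contradiction from Proposition \ref{prop:w^l}. The only differences are cosmetic (indexing $\Sig'(U)$ by $\N$ rather than by its own elements, and an inductive rather than simultaneous choice of the witnesses).
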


\begin{proof}
Suppose for a contradiction that $\w_S^l$ is generated by a finite subset $U\subset S,$ and let $\Sig=\Sig'(U).$   Let $X'$ be the (infinite) set of all $x\in X$ such that for each $\s\in\Sig$ the set $X\!\setminus\!x\s$ is infinite.
Choose a set of distinct elements $\{x_{\s} : \s\in\Sig\}\subseteq X'$ such that $|X\!\setminus\!\{x_{\s} : \s\in\Sig\}|=|X|.$
Since the set $X\!\setminus\!x_{\s}\s$ is infinite for each $\s\in\Sig,$ 
we may choose a set of distinct elements $\{y_{\s} : \s\in\Sig\}$ such that $|X\!\setminus\!\{y_{\s} : \s\in\Sig\}|=|X|$ and $(x_{\s},y_{\s})\notin\s$ for each $\s\in\Sig.$
Since $\Sig$ is countable and $S$ is $\al$-transitive, there exist $\th,\phi\in S$ such that $x_{\s}\th=x_{\s}$ and $x_{\s}\phi=y_{\s}$ for all $\s\in\Sig.$  Then $(x_{\s},y_{\s})\in\th^{-1}\phi$ for all $\s\in\Sig.$  Now, by Proposition \ref{prop:w^l}, there exists $\s\in\Sig$ with $\th^{-1}\phi\subseteq\s$.  But then $(x_{\s},y_{\s})\in\s,$ and we have a contradiction.
\end{proof}


We can now show that all the subsemigroups of $\H_X$ appearing in Table \ref{table:results} do not have a finitely generated universal left congruence.

\begin{thm}\label{thm:w^lnotfg}
If $S$ is an $\al$-transitive subsemigroup of $\H_X$, then $\w_S^l$ is not finitely generated.
In particular, the universal left congruence on each of the following semigroups is not finitely generated: $\Sym_X$; $\F_X$; $\Inj_X$; $\BL_{X,p}$ where $\al\leq p\leq|X|$; $\BL_X^1$; $\Sym_X\cup\BL_X$; $\DBL_{X,q}$ where $\al\leq q<|X|$; $\H_X$; and $\mathcal{K}_X$.
\end{thm}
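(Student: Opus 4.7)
My plan is to apply Lemma \ref{lem:w^lnotfg}, which reduces the problem to a combinatorial statement about relations of the form $\sigma \in \Sigma'(U)$ for finite $U \subset S$. Specifically, given any finite $U \subset S$, I shall establish the strong property that $|X \setminus x\sigma| = |X|$ (and hence $X \setminus x\sigma$ is infinite) for \emph{every} $x \in X$ and every $\sigma \in \Sigma'(U)$; this certainly yields infinitely many $x$ as required by the lemma.

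The key is the following cardinality claim: if $\alpha \in \H_X$ and $A \subseteq X$ satisfies $|A| < |X|$, then the full preimage $A\alpha^{-1}$ also satisfies $|A\alpha^{-1}| < |X|$. To see this, set $B = A\alpha^{-1}$ and observe that $B\alpha \subseteq A$, so $|B\alpha| \leq |A| < |X|$. The contrapositive of the defining condition of $\H_X$ (``$|Y| = |X|$ implies $|Y\alpha| = |X|$'') then forces $|B| < |X|$.

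Given $x \in X$ and $\sigma = \alpha_1^{-1}\beta_1 \cdots \alpha_k^{-1}\beta_k \in \Sigma'(U)$, I use the identity $Y\alpha^{-1}\beta = (Y\alpha^{-1})\beta$, so that applying a factor $\alpha_i^{-1}$ corresponds to taking a preimage and applying $\beta_i$ corresponds to taking an image. Starting from $|\{x\}| = 1 < |X|$, I alternate between the cardinality claim (which controls each preimage, using $\alpha_i \in U \subseteq \H_X$) and the trivial observation $|C\beta_i| \leq |C|$ (which controls each image). A straightforward induction on $k$ yields $|x\sigma| < |X|$, and therefore $|X \setminus x\sigma| = |X|$, which is infinite since $X$ is. Thus the hypothesis of Lemma \ref{lem:w^lnotfg} is satisfied, and $\w_S^l$ is not finitely generated.

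For the specific semigroups listed in the ``in particular'' clause, membership in $\H_X$ has already been established in the discussion of $\H_X$ in Section \ref{subsec:trans}, and $\aleph_0$-transitivity is routine: it either follows from containment of the $|X|$-transitive group $\Sym_X$ (for $\F_X$, $\Inj_X$, $\Sym_X \cup \BL_X$, $\mathcal{K}_X$, $\H_X$, and $\BL_X^1$), or from a direct extension of a partial bijection $\lambda \in \I_X$ of small domain satisfying $|X \setminus \dom\lambda| = |X \setminus \im\lambda| = |X|$ to a map of the prescribed type (for $\Sym_X$, $\BL_{X,p}$, and $\DBL_{X,q}$). The only real content is the cardinality claim about $\H_X$; beyond this, the argument is bookkeeping, and no delicate cofinality issues arise because at each step we take a single preimage rather than a union indexed by a set of size $|X|$.
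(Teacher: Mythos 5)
Your proof is correct and follows essentially the same route as the paper: both reduce to Lemma \ref{lem:w^lnotfg} and run the same induction along the factors of $\sigma$, with your standalone cardinality claim (preimages under elements of $\H_X$ of sets of cardinality $<|X|$ again have cardinality $<|X|$) being exactly the contrapositive step the paper performs inline via $x\sigma_{i-1}\alpha_i^{-1}\alpha_i = x\sigma_{i-1}\cap\im\alpha_i$. One trivial slip in the ``in particular'' bookkeeping: $\BL_X^1$ does not contain $\Sym_X$, but its $\aleph_0$-transitivity follows from that of $\BL_X$, so nothing is affected.
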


\begin{proof}
We claim that $S$ satisfies the condition of Lemma \ref{lem:w^lnotfg}, and hence $\w_S^l$ is not finitely generated.  Indeed, consider any $x\in X,$ $U\subseteq S$ and $\s=\a_1^{-1}\b_1\dots\a_k^{-1}\b_k\in\Sig(U)$ (where $\a_i,\b_i\in U$).  Define $\s_i=\a_1^{-1}\b_1\dots\a_i^{-1}\b_i$ for $i=0,\dots,k$ (interpreting $\s_0=1_X$).  We have $|x\s_0|=|\{x\}|<|X|.$  Now let $i\in\{1,\dots,k\}$, and assume that $|x\s_{i-1}|<|X|.$
We have $x\s_{i-1}\a_i^{-1}\a_i=x\s_{i-1}\cap\im\a_i$, so $|x\s_{i-1}\a_i^{-1}\a_i|\leq|x\s_{i-1}|<|X|.$  Since $\a_i\in\H_X$, it follows that $|x\s_{i-1}\a_i^{-1}|<|X|.$  Therefore, 
$$|x\s_i|=|x\s_{i-1}\a_i^{-1}\b_i|\leq|x\s_{i-1}\a_i^{-1}|<|X|.$$
Hence, by induction, we have $|x\s|=|x\s_k|<|X|.$
Thus $X\!\setminus\!x\s$ is infinite, as required.
\end{proof}

\vspace{0.5em}
\begin{rem}~
\begin{enumerate}[leftmargin=*]
\item The statements and proofs of Lemma \ref{lem:w^lnotfg} and Theorem \ref{thm:w^lnotfg} would still hold if we replaced `finitely generated' with `countably generated'.
\item The monoid $\Inj_X$ coincides with the $\ar$-class of the identity of $\T_X,$ so every subsemigroup of $\Inj_X$ is $\ar^{\ast}$-simple.  Thus, by the left-right dual of Proposition \ref{prop:L*-simple}, the universal left congruence $\w_S^l$ on any uncountable subsemigroup $S$ of $\Inj_X$ is not finitely generated.
\end{enumerate}
\end{rem}

The remaining semigroups to consider are $\T_X\!\setminus\!\Inj_X$, $\DBL_X$, $\DBL_X^1$, $\Sym_X\cup\DBL_X$ and $\Surj_X$.  We will show that each of these semigroups has a finitely generated universal left congruence and finite left diameter.  To this end, we first establish the following mappings, which were introduced in \cite{Gallagher:2005} to prove that $\T_X$ and $\PT_X$ have monogenic diagonal left acts.

Choose any bijection $\tnu : X\to X\times X,$ and let $\ta=\tnu\pi_1$ and $\tb=\tnu\pi_2,$ where $\pi_1,\pi_2 : X\times X\to X$ denote the projections onto the first and second coordinates, respectively.  Note that $\ta,\tb\in\DBL_X$.  For each pair $\th,\phi\in\T_X,$ define a map
$$\tg(\th,\phi) : X\to X, x\mapsto(x\th, x\phi)\tnu^{-1}.$$
Observe that $\ker\tg(\th,\phi)=\ker\th\cap\ker\phi,$ and $\tg(\th,\phi)\ta=\th$ and $\tg(\th,\phi)\tb=\phi.$  It follows immediately that $\T_X\times\T_X=\T_X(\ta,\tb).$

We fix the maps $\tnu,$ $\ta,$ $\tb$ and $\tg(\th,\phi)$ ($\th,\phi\in\T_X$) for the remainder of this section.

\begin{defn}
Let $S$ be a subsemigroup of $\T_X$ such that $\ta, \tb\in S,$ let $\th,\phi\in S,$ and let $k\in\N.$  By an $(\ta,\tb,k)${\em-inducing sequence from $\th$ to $\phi$} (in $S$), we mean a sequence $$\th=\psi_1, \psi_2, \dots, \psi_{k+1}=\phi$$ of elements of $S$ where $\tg(\psi_i,\psi_{i+1})\in S$ for each $i\in\{1,\dots,k\}.$
\end{defn}

The following lemma is an analogue of the (1)$\Rightarrow$(2) part of Lemma \ref{lem:inducing,right}, showing that $(\ta,\tb,k)$-inducing sequences give rise to $(\ta,\tb)$-sequences of length $k$.

\begin{lem}\label{lem:inducing,left}
Let $S$ be a subsemigroup of $\T_X$ such that $\ta, \tb\in S.$  If there exists an $(\ta,\tb,k)$-inducing sequence
$$\th=\psi_1, \psi_2, \dots, \psi_{k+1}=\phi$$ 
from $\th$ to $\phi$ in $S,$ then there exists an $(\ta,\tb)$-sequence
$$\th=\g_1\ta,\, \g_1\tb=\g_2\ta,\, \dots,\, \g_{k-1}\tb=\g_k\ta,\, \g_k\tb=\phi$$
from $\th$ to $\phi$ of length $k$ in $S.$
\end{lem}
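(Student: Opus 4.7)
The plan is to mirror the $(1)\Rightarrow(2)$ part of Lemma \ref{lem:inducing,right}, exploiting the symmetry between the two constructions: $\hg(\th,\phi)$ satisfies $\ha\hg(\th,\phi)=\th$ and $\hb\hg(\th,\phi)=\phi$, which forced the $\g_i$ of the right-sided sequence to be obtained as $\hg(\psi_i,\psi_{i+1})$; dually, $\tg(\th,\phi)$ satisfies $\tg(\th,\phi)\ta=\th$ and $\tg(\th,\phi)\tb=\phi$, so the $\g_i$ in our left-sided target sequence are naturally candidates to be defined as $\tg(\psi_i,\psi_{i+1})$.

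Concretely, given the $(\ta,\tb,k)$-inducing sequence $\th=\psi_1,\psi_2,\dots,\psi_{k+1}=\phi$, I would set
\[
\g_i=\tg(\psi_i,\psi_{i+1})\qquad\text{for each }i\in\{1,\dots,k\}.
\]
Membership $\g_i\in S$ is exactly the defining condition of an $(\ta,\tb,k)$-inducing sequence, so this is free. Applying the identities $\tg(\psi_i,\psi_{i+1})\ta=\psi_i$ and $\tg(\psi_i,\psi_{i+1})\tb=\psi_{i+1}$ (recorded immediately before the definition of inducing sequence) yields $\g_i\ta=\psi_i$ and $\g_i\tb=\psi_{i+1}$.

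From this the chain of equalities required for an $(\ta,\tb)$-sequence falls out immediately: $\g_1\ta=\psi_1=\th$; for $1\leq i\leq k-1$, $\g_i\tb=\psi_{i+1}=\g_{i+1}\ta$; and $\g_k\tb=\psi_{k+1}=\phi$. This produces the desired length-$k$ sequence in $S$.

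There is no real obstacle here; the lemma is essentially a reformulation of the defining identities for $\tg(\th,\phi)$. It is worth noting that, unlike Lemma \ref{lem:inducing,right}, the statement is only one-directional: the converse would require, given an arbitrary $(\ta,\tb)$-sequence with $\g_i\in S$, reconstructing $\psi_i\in S$ with $\tg(\psi_i,\psi_{i+1})=\g_i$, and this would impose the extra kernel constraint $\ker\g_i=\ker\psi_i\cap\ker\psi_{i+1}$ which need not be available. So the proof should deliberately travel in only the stated direction.
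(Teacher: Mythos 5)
Your proof is correct and is essentially identical to the paper's: both set $\g_i=\tg(\psi_i,\psi_{i+1})$, note that membership in $S$ is the defining condition of the inducing sequence, and chain the identities $\g_i\ta=\psi_i$, $\g_i\tb=\psi_{i+1}$. Your closing remark on why the converse direction is not claimed is a sensible observation but not needed for the result.
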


\begin{proof}
By definition we have $\tg(\psi_i,\psi_{i+1})\in S$ for each $i\in\{1,\dots,k\}.$  Letting $\g_i=\tg(\psi_i,\psi_{i+1}),$ we have $\g_i\ta=\psi_i$ and $\g_i\tb=\psi_{i+1}.$  Hence, we have an $(\ta,\tb)$-sequence
$$\th=\g_1\ta,\, \g_1\tb=\g_2\ta,\, \dots,\, \g_{k-1}\tb=\g_k\ta,\, \g_k\tb=\phi$$
in $S,$ as required.
\end{proof}

Lemma \ref{lem:inducing,left} yields the following counterpart of Proposition \ref{prop:rightdiameter}.

\begin{prop}\label{prop:leftdiameter}
Let $S$ be a subsemigroup of $\T_X$ such that:
\begin{enumerate}
\item $\ta,\tb\in S$; 
\item there exists $n\in\N$ such that for any pair $\th, \phi\in S$ there is an $(\ta,\tb,k)$-inducing sequence from $\th$ to $\phi$ in $S$ for some $k\leq n.$
\end{enumerate} 
Then $\w_S^l$ is generated by the pair $(\ta,\tb)$ and $D_l(S)\leq n.$  Furthermore, if $n=1$ (so that $\tg(\th,\phi)\in S$ for any $\th,\phi\in S$), then the diagonal left $S$-act is generated by $(\ta,\tb)$ (and is hence monogenic).
\end{prop}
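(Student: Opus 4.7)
The plan mirrors the proof of Proposition \ref{prop:rightdiameter}, with Lemma \ref{lem:inducing,left} acting as the key conversion tool from inducing sequences to $(\ta,\tb)$-sequences. I would proceed as follows.

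Take arbitrary $\th,\phi\in S$. By hypothesis~(2), there exists an $(\ta,\tb,k)$-inducing sequence from $\th$ to $\phi$ in $S$ for some $k\leq n$. Lemma \ref{lem:inducing,left} then produces an $(\ta,\tb)$-sequence
$$\th=\g_1\ta,\ \g_1\tb=\g_2\ta,\ \dots,\ \g_{k-1}\tb=\g_k\ta,\ \g_k\tb=\phi$$
in $S$ of length $k$. By the left-right dual of Lemma \ref{lem:sequence}, this witnesses $(\th,\phi)$ as a consequence of the pair $(\ta,\tb)$ in the left congruence generated by $\{(\ta,\tb)\}$, via a derivation of length $k\leq n$. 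Since $\th,\phi\in S$ were arbitrary, it follows that $\w_S^l=\l(\ta,\tb)\r$ and $D_l(S)\leq D_l(\{(\ta,\tb)\},S)\leq n$. The case $\th=\phi$ is handled by the trivial length-$0$ sequence and requires no additional argument.

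For the ``furthermore'' clause, suppose $n=1$, so that by assumption $\tg(\th,\phi)\in S$ for all $\th,\phi\in S$. Recalling from the construction that $\tg(\th,\phi)\ta=\th$ and $\tg(\th,\phi)\tb=\phi$, we obtain the identity
$$(\th,\phi)=\tg(\th,\phi)\cdot(\ta,\tb)$$
in the diagonal left $S$-act. As $(\th,\phi)$ was arbitrary, the diagonal left $S$-act is generated by the single pair $(\ta,\tb)$, hence is monogenic.

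I do not anticipate any serious obstacle: the substantive content has been absorbed into Lemma \ref{lem:inducing,left} (the left-sided counterpart of Lemma \ref{lem:inducing,right}), and the argument is essentially the left-right dual of Proposition \ref{prop:rightdiameter}. The only subtlety worth flagging is that, unlike in the right-handed case, Lemma \ref{lem:inducing,left} is stated as a one-way implication rather than an equivalence; but only this direction is needed here, so the plan goes through without modification.
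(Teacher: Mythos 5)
Your proposal is correct and matches the paper's intended argument exactly: the paper omits an explicit proof, stating only that Lemma \ref{lem:inducing,left} ``yields'' the proposition, and your write-up supplies precisely the routine details (convert the inducing sequence to an $(\ta,\tb)$-sequence, invoke the dual of Lemma \ref{lem:sequence}, and use $\tg(\th,\phi)\ta=\th$, $\tg(\th,\phi)\tb=\phi$ for the monogenic case). Your remark that only the one-way direction of Lemma \ref{lem:inducing,left} is needed here is also accurate.
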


We now consider $\T_X\!\setminus\Inj_X$.

\begin{thm}
The semigroup $\T_X\!\setminus\Inj_X$ has left diameter 2.
\end{thm}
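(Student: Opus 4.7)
The plan is to establish the lower bound $D_l(\T_X\!\setminus\!\Inj_X)\geq 2$ from previously stated results, then match it with an upper bound obtained via Proposition~\ref{prop:leftdiameter} by constructing short $(\ta,\tb,k)$-inducing sequences.

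For the lower bound, Table~\ref{table:da} records that the diagonal left act of $\T_X\!\setminus\!\Inj_X$ is not finitely generated, so the left-right dual of Proposition~\ref{prop:diagonal,diameter} immediately rules out $D_l=1$. Since $\T_X\!\setminus\!\Inj_X$ is a subsemigroup of $\T_X$ containing $\ta$ and $\tb$ (both $\ta$ and $\tb$ are far from injective, having fibres of cardinality $|X|$, so they lie in $\DBL_X\subseteq\T_X\!\setminus\!\Inj_X$), once we supply a uniform bound $n$ with $(\ta,\tb,k)$-inducing sequences of length $k\leq n$ between any two elements, Proposition~\ref{prop:leftdiameter} will give both that $\w_S^l$ is finitely generated and that $D_l(S)\leq n$.

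For the upper bound, I will show that $n=2$ suffices, using a constant map as the middle term. Given $\th,\phi\in\T_X\!\setminus\!\Inj_X$, fix any element $c\in\T_X$ that is constant on $X$; since $|X|\geq 2$, we have $c\in\T_X\!\setminus\!\Inj_X$. The key computational fact recorded just before the definition of $(\ta,\tb,k)$-inducing sequences is that $\ker\tg(\a,\b)=\ker\a\cap\ker\b$ for any $\a,\b\in\T_X$. Consequently
\[
\ker\tg(\th,c)=\ker\th\cap\ker c=\ker\th\neq 1_X,
\qquad
\ker\tg(c,\phi)=\ker c\cap\ker\phi=\ker\phi\neq 1_X,
\]
since $\ker c=X\times X$ and both $\th$ and $\phi$ are non-injective. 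Hence $\tg(\th,c),\tg(c,\phi)\in\T_X\!\setminus\!\Inj_X$, so $\th,c,\phi$ is an $(\ta,\tb,2)$-inducing sequence from $\th$ to $\phi$ inside $\T_X\!\setminus\!\Inj_X$. Proposition~\ref{prop:leftdiameter} then yields $D_l(\T_X\!\setminus\!\Inj_X)\leq 2$, completing the proof.

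There is essentially no obstacle here: the proof is a direct application of Proposition~\ref{prop:leftdiameter}. The only observation required is that the universal relation $\ker c$ of a constant map swallows every other kernel under intersection, which both forces the intermediate transformation to be non-injective \emph{and} guarantees that the induced maps $\tg(\th,c),\tg(c,\phi)$ remain non-injective. The only point deserving a brief sanity check is that $\ta,\tb$ themselves belong to $\T_X\!\setminus\!\Inj_X$, which follows from the explicit description $\ta=\tnu\pi_1$, $\tb=\tnu\pi_2$ with $\tnu:X\to X\times X$ a bijection.
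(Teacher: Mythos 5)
Your proof is correct and follows essentially the same route as the paper: the lower bound comes from the non-finite generation of the diagonal left act (Table \ref{table:da} with the dual of Proposition \ref{prop:diagonal,diameter}), and the upper bound uses a constant map $c_x$ as the middle term of an $(\ta,\tb,2)$-inducing sequence, via the identity $\ker\tg(\a,\b)=\ker\a\cap\ker\b$. No changes needed.
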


\begin{proof}
Let $S=\T_X\!\setminus\Inj_X.$  By Table \ref{table:da} and Proposition \ref{prop:diagonal,diameter}, $S$ does not have left diameter 1.  Using Proposition \ref{prop:leftdiameter}, we show that $\w_S^l=\langle(\ta,\tb)\rangle$ with $D_l(S)\leq 2,$ and hence $D_l(S)=2.$  

It is clear that $\ta,\tb\in S.$  Fix any $x\in X,$ and consider arbitrary $\th,\phi\in S.$  Certainly $c_x\in S.$  We have
$$\ker\tg(\th,c_x)=\ker\th\cap\ker c_x=\ker\th\cap(X\times X)=\ker\th.$$  Therefore, since $\th$ is not injective, it follows that $\tg({\th,c_x})$ is not injective, i.e.\ $\tg(\th,c_x)\in S.$  Similarly, we have $\tg(c_x,\phi)\in S.$  Thus, there is an $(\ta,\tb,2)$-inducing sequence $\th,\,c_x,\,\phi,$ as required.
\end{proof}

We now turn our attention to the dual Baer-Levi semigroup $\DBL_X$. 

We call a partition of an infinite set $Y$ a $\DBL${\em-partition} of $Y$ if it is of the form $\{A_y : y\in Y\}$ where $|A_y|=|Y|$ for all $y\in Y.$  For each $\a\in\DBL_X$, the set $\{x\a^{-1} : x\in X\}$ of kernel classes of $\a$ forms a $\DBL$-partition of $X.$ 

The following technical lemma concerning $\DBL$-partitions will be crucial in determining the left diameter of $\DBL_X$.

\begin{lem}\label{lem:DBL}
Let $\{A_x : x\in X\}$ and $\{B_x : x\in X\}$ be a pair of $\DBL$-partitions of $X.$  Then there exists a third $\DBL$-partition $\{C_x : x\in X\}$ of $X$ such that for each $x\in X$ the set $\{A_x\cap C_y : y\in X\}$ is a $\DBL$-partition of $A_x$, and $\{B_x\cap C_y : y\in X\}$ is a $\DBL$-partition of $B_x$. 
\end{lem}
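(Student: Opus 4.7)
The plan is to construct the partition $\{C_y : y \in X\}$ by transfinite recursion. Let $\kappa = |X|$, and observe that it suffices to produce a function $\psi : X \to X$ such that each fibre $\psi^{-1}(y)$ meets every $A_x$ and every $B_x$ in a set of cardinality $\kappa$. Setting $C_y = \psi^{-1}(y)$, this gives $|A_x \cap C_y| = |B_x \cap C_y| = \kappa$ for all $x,y \in X$, which is precisely the condition that $\{A_x \cap C_y : y \in X\}$ and $\{B_x \cap C_y : y \in X\}$ be $\DBL$-partitions of $A_x$ and $B_x$, and also forces $\{C_y : y \in X\}$ to be a $\DBL$-partition of $X$.

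To build $\psi$, I would first enumerate a sequence of ``tasks'' $(t_\alpha)_{\alpha < \kappa}$ with each $t_\alpha \in X \times X \times \{0, 1\}$, arranged so that every triple appears exactly $\kappa$ times in the sequence; this is possible because $|X \times X \times \{0,1\} \times \kappa| = \kappa$. The triple $(x, y, 0)$ encodes the instruction ``put a fresh element of $A_x$ into $C_y$'', and $(x, y, 1)$ the analogous instruction for $B_x$. Proceeding recursively, at stage $\alpha$ with $t_\alpha = (x, y, i)$, pick any element of $A_x$ (if $i = 0$) or $B_x$ (if $i = 1$) that is not already in the domain of the partial function $\psi_\alpha$ constructed so far, and send it to $y$. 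Such an element exists because $|\dom\psi_\alpha| \leq |\alpha| < \kappa$, whereas $|A_x| = |B_x| = \kappa$. After all $\kappa$ stages, extend $\psi$ to the whole of $X$ by sending any remaining element to a fixed $y_0 \in X$.

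The verification is then a bookkeeping exercise: for each pair $(x,y) \in X \times X$, the task $(x,y,0)$ has been processed $\kappa$ times, each processing depositing a distinct element of $A_x$ into $\psi^{-1}(y)$, so $|A_x \cap C_y| \geq \kappa$; the reverse inequality is trivial, and the $B_x$ case is symmetric. The identity $|C_y| = \kappa$ then follows from $|C_y| \geq |A_{x_0} \cap C_y| = \kappa$ for any fixed $x_0$. The only genuinely technical point in the argument is the cardinal equality $|X \times X \times \{0,1\} \times \kappa| = \kappa$, which makes the task enumeration possible; beyond this, I do not anticipate any real obstacle, because at each stage the recursion only needs to avoid a set of size strictly less than $\kappa$ inside a set of size $\kappa$.
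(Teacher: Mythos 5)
Your proposal is correct and is essentially the paper's own argument: the paper also proceeds by transfinite recursion over an enumeration of $\kappa$ many ``tasks'' (encoded there as tuples $(\a,\b,\g,n)\in\k^3\times\{0,1\}$, where the coordinate $\b$ plays exactly the role of your $\k$-fold repetition of each triple), at each stage choosing a fresh element of the prescribed $A_\a$ or $B_\a$ and assigning it to the prescribed block $C_\g$, with leftovers dumped into one fixed block at the end. The two write-ups differ only in bookkeeping notation.
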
 
 
\begin{proof} 
Let $|X|=\k$, and for convenience assume that $X=\k,$ where as usual the cardinal $\k$ is identified with the set of all ordinals $\lam<\k.$  So, consider a pair of $\DBL$-partitions $\{A_{\lam} : \lam\in\k\}$ and $\{B_{\lam} : \lam\in\k\}.$  We begin by defining a sequence $(x_\lam)_{\lam\in\k}$ of distinct elements of $X$ by transfinite induction, as follows.  First, we define the set
$$T=\k^3\times\{0,1\}=\{(\a,\b,\g,n) : \a,\b,\g\in\k,\ n\in\{0,1\}\}.$$
Since $|T|=\k$, we may fix a bijection $\k\to T, \lam\mapsto t_\lam$.  Now let $\lam\in\k$, and suppose that we have defined the elements $x_\mu$ for all $\mu<\lam$.  Also write $t_\lam=(\a,\b,\g,n)$, and define $Y=\{x_\mu : \mu<\lam\}.$  Since $|Y|=|\lam|<\k$ (as $\k$ is a cardinal), we can define $x_\lam$ to be any element of $A_\a\!\setminus\!Y$ if $n=0,$ or any element of $B_\a\!\setminus\!Y$ if $n=1.$  

Now that we have defined the sequence $(x_\lam)_{\lam\in\k}$, for each $\lam\in\k$ we define 
$$T_\lam=\k\times\k\times\{\lam\}\times\{0,1\}=\{(\a,\b,\lam,n) : \a,\b\in\k,\ n\in\{0,1\}\}.$$
Finally, we set
$$C_\lam=\begin{cases}
\{x_\mu : t_\mu\in T_\lam\}&\text{if $\lam\geq1$,}\\
\{x_\mu : t_\mu\in T_\lam\}\cup X\!\setminus\!\{x_\lam : \lam\in\k\}&\text{if $\lam=0.$}
\end{cases}$$
Then $\{C_\lam : \lam\in\k\}$ is a $\DBL$-partition of $X$ because $\{T_\lam : \lam\in\k\}$ is a $\DBL$-partition of $T.$  Also, for any $\lam,\mu\in\k,$ the set $C_\lam$ contains $\k$ elements of the form $x_\nu$ where $t_\nu\in\{\mu\}\times\k\times\{\lam\}\times\{0\},$ each of which belongs to $A_\mu$ by definition.  This shows that $|A_\mu\cap C_\lam|=\k$ for all $\lam,\mu\in\k.$  Similarly, we have $|B_\mu\cap C_\lam|=\k$ for all $\lam,\mu\in\k.$  This completes the proof.
\end{proof}

We are now in a position to compute the left diameter of $\DBL_X$.

\begin{thm}\label{thm:DBL} 
The dual Baer-Levi semigroup $\DBL_X$ has left diameter 2.
\end{thm}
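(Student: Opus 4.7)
The plan is to apply Proposition \ref{prop:leftdiameter} with $n=2$, so the task reduces to showing that $\ta,\tb\in\DBL_X$ and that, for any $\th,\phi\in\DBL_X$, there exists some $\psi\in\DBL_X$ with $\tg(\th,\psi),\tg(\psi,\phi)\in\DBL_X$. The containments $\ta,\tb\in\DBL_X$ are already noted in the text. The lower bound $D_l(\DBL_X)\geq 2$ will follow from Table \ref{table:da} (any infinite subsemigroup of $\Surj_X$ has a non-finitely-generated diagonal left act) together with Proposition \ref{prop:diagonal,diameter}, so the crux is really the upper bound.

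Recall that for any $\th,\phi\in\T_X$ and any $x\in X$, if $x\tnu=(a,b)$ then
$$x\tg(\th,\phi)^{-1}=\{y\in X : y\th=a,\,y\phi=b\}=a\th^{-1}\cap b\phi^{-1},$$
and $\tg(\th,\phi)$ is surjective precisely when $a\th^{-1}\cap b\phi^{-1}\neq\emptyset$ for every $(a,b)\in X\times X$. Hence $\tg(\th,\phi)\in\DBL_X$ if and only if $|a\th^{-1}\cap b\phi^{-1}|=|X|$ for all $a,b\in X$. Phrased in terms of partitions: writing the kernel class decompositions of $\th$ and $\phi$ as $\{A_a : a\in X\}$ and $\{B_b : b\in X\}$, we need $|A_a\cap B_b|=|X|$ for all $a,b$, i.e.\ $\{A_a\cap B_b : b\in X\}$ must be a $\DBL$-partition of each $A_a$, and symmetrically for each $B_b$.

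Now, given $\th,\phi\in\DBL_X$, the families $\{A_a : a\in X\}$ and $\{B_b : b\in X\}$ are both $\DBL$-partitions of $X$. Lemma \ref{lem:DBL} supplies a $\DBL$-partition $\{C_c : c\in X\}$ that refines to a $\DBL$-partition inside each $A_a$ and inside each $B_b$. I would use this to define $\psi\in\T_X$ by $x\psi=c\iff x\in C_c$; since $\{C_c : c\in X\}$ is a $\DBL$-partition, $\psi\in\DBL_X$ and $c\psi^{-1}=C_c$. The refinement property then gives $|a\th^{-1}\cap c\psi^{-1}|=|A_a\cap C_c|=|X|$ and $|c\psi^{-1}\cap b\phi^{-1}|=|C_c\cap B_b|=|X|$ for all $a,b,c\in X$, which by the observation above means $\tg(\th,\psi),\tg(\psi,\phi)\in\DBL_X$. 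Thus $\th,\psi,\phi$ is an $(\ta,\tb,2)$-inducing sequence, and Proposition \ref{prop:leftdiameter} yields $D_l(\DBL_X)\leq 2$.

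The substantive content of the argument is already packaged in Lemma \ref{lem:DBL}, so there should be no serious obstacle remaining; the main thing to get right is the dictionary between surjective maps with all kernel classes of size $|X|$ and $\DBL$-partitions, and the verification that the simultaneous refinement property provided by Lemma \ref{lem:DBL} is exactly what is needed to place both $\tg(\th,\psi)$ and $\tg(\psi,\phi)$ back in $\DBL_X$.
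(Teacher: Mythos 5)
Your proposal is correct and follows essentially the same route as the paper: both the lower bound (via Table \ref{table:da} and Proposition \ref{prop:diagonal,diameter}) and the upper bound, where your $\psi$ built from the $\DBL$-partition $\{C_c\}$ of Lemma \ref{lem:DBL} is exactly the paper's intermediate map $\lam$, and your identification of $x\tg(\th,\phi)^{-1}$ with $a\th^{-1}\cap b\phi^{-1}$ matches the paper's computation of $y\g_1^{-1}$ and $y\g_2^{-1}$. No gaps.
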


\begin{proof}
Let $S=\DBL_X.$  By Table \ref{table:da} and Proposition \ref{prop:diagonal,diameter}, $S$ does not have left diameter 1.

To prove the inequality $D_l(S)\leq 2$ we use Proposition \ref{prop:leftdiameter}.
We have already noted that $\ta,\tb\in S.$
Consider $\th,\phi\in S.$  For each $x\in X,$ let $A_x=x\th^{-1}$ and $B_x=x\phi^{-1}.$  Then $\{A_x : x\in X\}$ and $\{B_x : x\in X\}$ are $\DBL$-partitions of $X.$  By Lemma \ref{lem:DBL}, there exists a $\DBL$-partition $\{C_x : x\in X\}$ of $X$ such that for each $x\in X$ the set $\{A_x\cap C_y : y\in X\}$ is a $\DBL$-partition of $A_x$, and $\{B_x\cap C_y : y\in X\}$ is a $\DBL$-partition of $B_x$.  Let $\lam\in S$ be given by $x\lam^{-1}=C_x$ for all $x\in X.$  We claim that 
$$\th,\, \lam,\, \phi\,$$ is an $(\ta,\tb,2)$-inducing sequence from $\th$ to $\phi.$   Letting $\g_1=\tg(\th,\lam)$ and $\g_2=\tg(\lam,\phi),$   we need to show that $\g_1,\g_2\in S.$  Indeed, for each $y\in X$ we have
\begin{align*}
y\g_1^{-1}&=\{x\in X : (x\th,x\lam)=y\tnu\}
=\{x\in X : x\th=y\ta\text{ and }x\lam=y\tb\}\\
&=\{x\in X : x\in(y\ta)\th^{-1}\text{ and }x\in(y\tb)\lam^{-1}\}
=\{x\in X : x\in A_{y\ta}\text{ and }x\in C_{y\tb}\}\\
&=A_{y\ta}\cap C_{y\tb},
\end{align*}
and similarly $y\g_2^{-1}=C_{y\ta}\cap B_{y\tb}.$
Thus, for each $y\in X$ we have
$$|y\g_1^{-1}|=|A_{y\ta}\cap C_{y\tb}|=|X|\quad\text{and}\quad|y\g_2^{-1}|=|C_{y\ta}\cap B_{y\tb}|=|X|,$$ so that $\g_1,\g_2\in S,$ as required.  
\end{proof}

Next, we establish a technical lemma, and then employ it to show that submonoids of $\Surj_X$ containing $\DBL_X^1$ have left diameter either 3 or 4.  In this lemma and what follows, a subset $Y\subset X$ is {\em colarge} (in $X$) if $|X\!\setminus\!Y|=|X|$.  

\begin{lem}\label{lem:colarge}
Let $\{\a_1,\dots,\a_n\}$ be a finite subset of $\T_X$, and let $x_1,\dots,x_{n-1}$ be (not necessarily distinct) elements of $X.$  If the set $Y=\bigcup_{1\leq i\leq n-1}x_i\a_i^{-1}$ is colarge in $X,$ then there exists at most one element $x\in X$ such that $Y\cup x\a_n^{-1}$ is not colarge in $X.$ 
\end{lem}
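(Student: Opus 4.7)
The plan is to prove the contrapositive by contradiction: assume there exist two distinct elements $x, x' \in X$ such that both $Y \cup x\a_n^{-1}$ and $Y \cup x'\a_n^{-1}$ fail to be colarge, and derive that $Y$ itself cannot be colarge. The only structural fact about $Y$ that I will use is that it is colarge; the specific form $Y = \bigcup x_i\a_i^{-1}$ plays no role in the argument.

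The key observation to exploit is that, since $\a_n \in \T_X$, the family of fibres $\{y\a_n^{-1} : y \in X\}$ partitions $X$ (with some parts possibly empty), so in particular $x\a_n^{-1}$ and $x'\a_n^{-1}$ are disjoint whenever $x \neq x'$. I will first rewrite
$$X \setminus (Y \cup x\a_n^{-1}) = (X \setminus Y) \setminus x\a_n^{-1} = \bigsqcup_{y \neq x}\bigl((X \setminus Y) \cap y\a_n^{-1}\bigr).$$
The assumption that $Y \cup x\a_n^{-1}$ is not colarge means the left-hand side has cardinality $<|X|$, so every summand on the right does; in particular, taking $y = x'$, I obtain $|(X \setminus Y) \cap x'\a_n^{-1}| < |X|$. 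Running the symmetric argument with the roles of $x$ and $x'$ swapped yields $|(X \setminus Y) \cap x\a_n^{-1}| < |X|$ as well.

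Finally I will combine these two pieces of information by writing
$$X \setminus Y = \bigl((X \setminus Y) \cap x\a_n^{-1}\bigr) \;\cup\; \bigl((X \setminus Y) \setminus x\a_n^{-1}\bigr),$$
a union of two sets both of cardinality $<|X|$. Standard cardinal arithmetic for infinite $|X|$ then forces $|X \setminus Y| < |X|$, contradicting the hypothesis that $Y$ is colarge.

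There is no real obstacle here; the only point to be careful about is the final cardinal-arithmetic step, but since the union involves only two (or even finitely many) small sets, it is immediate that their union has cardinality strictly less than the infinite cardinal $|X|$. The proof is therefore essentially a short disjointness-of-fibres argument followed by two applications of the assumption.
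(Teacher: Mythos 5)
Your proof is correct and is essentially the same argument as the paper's: both rest on the disjointness of the fibres $y\a_n^{-1}$ and on the decomposition $X\setminus Y=(x\a_n^{-1}\setminus Y)\sqcup\bigl(X\setminus(Y\cup x\a_n^{-1})\bigr)$, which forces $x\a_n^{-1}\setminus Y$ to be large whenever $Y\cup x\a_n^{-1}$ fails to be colarge. The paper phrases this as a direct implication (one bad $x$ makes every other $y$ good) while you phrase it as a contradiction from two bad elements, but the content is identical.
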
 
 
\begin{proof}
Suppose that $Y\cup x\a_n^{-1}$ is not colarge.  Since $X\!\setminus\!Y=\big(X\!\setminus\!(Y\cup x\a_n^{-1})\big)\cup(x\a_n^{-1}\!\setminus\!Y)$, and $|X\!\setminus\!Y|=|X|,$ it follows that $|x\a_n^{-1}\!\setminus\!Y|=|X|.$  But then for any $y\in X\!\setminus\!\{x\}$ we have 
$$X\!\setminus\!(Y\cup y\a_n^{-1})=(X\!\setminus\!Y)\cap(X\!\setminus\!y\a_n^{-1})\supseteq x\a_n^{-1}\!\setminus\!Y,$$
and hence $Y\cup y\a_n^{-1}$ is colarge.
\end{proof}

\begin{prop}\label{prop:DBL1,Surj} 
If $S$ is a monoid such that $\DBL_X^1\leq S\leq\Surj_X$, then $\w_S^l$ is finitely generated and $D_l(S)\in\{3,4\}.$
\end{prop}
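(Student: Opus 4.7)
The plan is to establish the upper bound $D_l(S)\leq 4$ and the lower bound $D_l(S)\geq 3$ separately, with the latter being substantially harder.  For the upper bound, observe that $\DBL_X$ is the minimal ideal of $\Surj_X$, hence a two-sided (in particular, left) ideal of $S$.  By Theorem~\ref{thm:DBL}, $\w_{\DBL_X}^l$ is finitely generated with $D_l(\DBL_X)=2$.  Applying the left-right dual of Lemma~\ref{lem:rightideal} with $I=\DBL_X$ immediately yields that $\w_S^l$ is finitely generated with $D_l(S)\leq D_l(\DBL_X)+2=4$.

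For the lower bound, I would first rule out $D_l(S)=1$: since $S$ is an infinite subsemigroup of $\Surj_X$, Table~\ref{table:da} shows its diagonal left act is not finitely generated, so Proposition~\ref{prop:diagonal,diameter} gives $D_l(S)\geq 2$.  To rule out $D_l(S)=2$, I would suppose $D_l(U,S)\leq 2$ for some finite $U\subseteq S\times S$ and derive a contradiction, writing $V\subseteq S$ for the finite set of elements appearing in pairs of $U$.

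The contradicting pair takes the form $(1_X,\phi)$ for a carefully chosen $\phi\in\DBL_X$.  The key observation is that if $\g,\a\in S\subseteq\Surj_X$ satisfy $1_X=\g\a$, then $\g$ is forced to be injective (as $x\g=y\g$ implies $x=x\g\a=y\g\a=y$), so $\g,\a\in\Sym_X$.  Hence every length-$\geq 1$ $U$-sequence from $1_X$ must begin with a pair whose first entry lies in the finite set $\Sym_X\cap V$.  A length-$1$ sequence then yields one of finitely many candidate values $\phi=\sigma^{-1}\b_1$; and, by Proposition~\ref{prop:w^l}, a length-$2$ sequence forces $\phi$---viewed as a subset of $X\times X$---to be contained in some $\s=\sigma^{-1}\b_1\a_2^{-1}\b_2\in\Sig'(V)$ with $(\sigma,\b_1,\a_2,\b_2)$ drawn from the finite set $(\Sym_X\cap V)\times V^3$, together with an intermediary $\g_2\in S$ satisfying $\g_2\a_2=\sigma^{-1}\b_1$ and $\g_2\b_2=\phi$.

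The main obstacle---and the technical heart of the proof---is constructing $\phi\in\DBL_X$ that simultaneously defeats every candidate tuple.  This is where Lemma~\ref{lem:colarge} is invoked.  The plan is to process the (finitely many) candidate tuples one at a time, at each step selecting a fresh witness point $x\in X$ and prescribing the value $x\phi$ so as to block the current tuple, either by violating the containment $\phi\subseteq\s$ or by obstructing the existence of a suitable $\g_2\in S$.  Lemma~\ref{lem:colarge}, applied to the preimages under the finitely many maps in $V$, guarantees that the relevant unions of preimages of already-prescribed values remain colarge in $X$ throughout, leaving ample room at the final step to extend the finite partial specification to an actual $\phi\in\DBL_X$ via its $\al$-transitivity (inherited by $S$).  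The resulting $\phi$ admits no length-$\leq 2$ $U$-sequence from $1_X$, giving the contradiction and completing the lower bound.
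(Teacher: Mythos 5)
Your upper bound is exactly the paper's argument ($\DBL_X$ is a left ideal of $S$, Theorem \ref{thm:DBL} gives $D_l(\DBL_X)=2$, and the dual of Lemma \ref{lem:rightideal} gives $D_l(S)\leq 4$), and your setup for the lower bound also matches the paper's: assume $D_l(U,S)\leq 2$, join $1_X$ to a to-be-constructed element of $\DBL_X$, and use the fact that a factorisation $1_X=\g\a$ in $\Surj_X$ forces $\g,\a\in\Sym_X$, so the middle term of any such sequence is one of finitely many candidates $\a_1^{-1}\b_1$. The gap is in the construction of the blocking element $\phi$. You propose to defeat each candidate tuple by prescribing the \emph{value} $x\phi$ at a fresh witness point and finally extending the finite partial map to some $\phi\in\DBL_X$. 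This cannot work. Take $S=\Surj_X$ and let $U$ contain the pairs $(1_X,\b_1)$ and $(\ta,\tb)$ with $\b_1\in\DBL_X$ and $\ta,\tb$ as in Section \ref{sec:left}. For the tuple $(\a_1,\b_1,\a_2,\b_2)=(1_X,\b_1,\ta,\tb)$ one has $\s=\b_1\ta^{-1}\tb=X\times X$ (since $z\ta^{-1}\tb=X$ for every $z$), so the containment $\phi\subseteq\s$ from Proposition \ref{prop:w^l} is vacuous and your first blocking option is unavailable. For the second option, the equations $\g_2\ta=\b_1$ and $\g_2\tb=\phi$ determine $\g_2=\tg(\b_1,\phi)$ uniquely, and $\tg(\b_1,\phi)$ is surjective if and only if every kernel class of $\b_1$ meets every kernel class of $\phi$. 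Whether that holds is simply not decided by the values of $\phi$ at finitely many points: any finite partial specification $x_1\phi=y_1,\dots,x_k\phi=y_k$ extends to some $\phi\in\DBL_X$ all of whose classes meet all classes of $\b_1$ (a Lemma \ref{lem:DBL}--type construction), and for that extension the length-2 sequence exists after all. So your inductive procedure has no move that blocks this tuple.

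What is actually needed --- and what the paper does --- is to control entire \emph{preimage classes} of the constructed map rather than finitely many of its values. The paper first chooses, for each candidate middle map $\psi=\mu^{-1}\lam$, a point $y_\psi$ so that $A=\bigcup_\psi y_\psi\psi^{-1}$ is colarge (this is the place where Lemma \ref{lem:colarge} genuinely enters, via induction over the finitely many $\psi$); then, for each pair $(\a,\b)$ and each $\psi$, a witness $x_{\a,\b,\psi}$ with $x_{\a,\b,\psi}\a^{-1}\cap y_\psi\b^{-1}\neq\emptyset$; and only then builds $\th\in\DBL_X$ subject to the constraint $x_{\a,\b,\psi}\th^{-1}\subseteq X\setminus A$ on whole kernel classes --- achievable precisely because $A$ is colarge. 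The contradiction then comes from a single point $u\in z\g^{-1}$ lying in both $x_{\a,\b,\psi}\th^{-1}$ and $y_\psi\psi^{-1}\subseteq A$. Your appeal to Lemma \ref{lem:colarge} gestures at the right set $A$ but attaches it to the wrong object (values of $\phi$ rather than its preimage classes), and this is not a presentational slip: it is the missing idea that makes the lower bound go through.
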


\begin{proof}
Since $\DBL_X$ is an ideal of $S,$ it follows from Theorem \ref{thm:DBL} and the dual of Lemma \ref{lem:rightideal} that $\w_S^l$ is finitely generated with $D_l(S)\leq 4.$

Now suppose for a contradiction that $D_l(S,U)\leq2$ for some finite set $U\subset S$.  Let 
$$V=\{\mu^{-1}\lam : \mu\in U\cap\Sym_X, \lam\in U\},$$ 
and note that $V$ is finite.  By an easy induction argument, using Lemma \ref{lem:colarge}, we may fix elements $y_\phi\in X$ ($\phi\in V$) such that $A=\bigcup_{\phi\in V}y_\phi\phi^{-1}$ is colarge.  For each pair $\a,\b\in U$ and each $\phi\in V,$ we fix some $x_{\a,\b,\phi}\in X$ such that $x_{\a,\b,\phi}\a^{-1}\cap y_{\phi}\b^{-1}\neq\emptyset$ (since $y_\phi\in X=\im\b,$ we can pick any $z\in y_\phi\b^{-1}$ and define $x_{\a,\b,\phi}=z\a$).  Now choose any $\th\in\DBL_X\,(\subseteq S)$ such that $\{x_{\a,\b,\phi} : \a,\b\in U,\phi\in V\}\th^{-1}\subseteq X\!\setminus\!A$ (such a map exists because $A$ is colarge).  As $D_l(S,U)\leq2,$ there exists a $U$-sequence
$$\th=\g\a,\,\g\b=\d\lam,\,\d\mu=1_X$$
(where $\a,\b,\lam,\mu\in U$).
Since $\Surj_X\!\setminus\!\Sym_X$ is an ideal of $\Surj_X$ \cite[Proof of Theorem 4.4.2]{Gallagher:thesis}, it follows that $\d,\mu\in\Sym_X$ with $\d=\mu^{-1}$.  Thus, letting $\phi=\mu^{-1}\lam\in V,$ we have $\th=\g\a$ and $\g\b=\phi.$  Let $x=x_{\a,\b,\phi}$, choose some $z\in x\a^{-1}\cap y_{\phi}\b^{-1}$, and then pick some $u\in z\g^{-1}$.  Then $u\th=u\g\a=z\a=x$, so $u\in x\th^{-1}$.  On the other hand, we have $u\phi=u\g\b=z\b=y_{\phi},$ so $u\in y_\phi\phi^{-1}\in A$.  But this contradicts the fact that $x\th^{-1}\subseteq X\!\setminus\!A.$  Thus $D_l(S)\geq3.$
\end{proof}

Note that the set $\Surj_X\!\setminus\!\DBL_X$ is not a subsemigroup of $\Surj_X$ (in contrast to the situation for $\Inj_X$, where $\Inj_X\!\setminus\!\BL_X$ {\em is} a subsemigroup).  However, for a subsemigroup $S$ of $\Surj_X$, if $S\!\setminus\!\DBL_X$ is finite and non-empty then it is a subgroup of $\Sym_X$.

\begin{thm}\label{thm:DBL1} 
For any finite subgroup $G$ of $\Sym_X$, the monoid $G\cup\DBL_X$ has left diameter 3.  In particular, $\DBL_X^1$ has left diameter 3.
\end{thm}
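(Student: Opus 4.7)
The plan is to combine Proposition~\ref{prop:DBL1,Surj}, which already gives $\w_S^l$ finitely generated with $D_l(S)\in\{3,4\}$, with an explicit construction of a finite generating set $V$ for $\w_S^l$ witnessing $D_l(V,S)\le 3$; it only remains to rule out $D_l(S)=4$. The key idea is to use Theorem~\ref{thm:DBL} for pairs inside the ideal $\DBL_X$, and rely on the monoid identity $1_X$ to ``bridge'' between $G$ and $\DBL_X$ in a single step.

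I would take $V=G\cup\{\ta,\tb\}$, which is finite because $G$ is, and show that any $\theta,\phi\in S$ are joined by a $V\times V$-sequence of length at most $3$. Since $\DBL_X$ is an ideal of $S$, every element of $S$ lies in $G$ or in $\DBL_X$, and the argument splits into four cases. If $\theta,\phi\in G$, the pair $(\theta,\phi)\in V\times V$ together with $s_1=1_X$ gives the length-$1$ sequence $\theta=1_X\cdot\theta,\ 1_X\cdot\phi=\phi$. If $\theta,\phi\in\DBL_X$, Theorem~\ref{thm:DBL} directly provides a $(\ta,\tb)$-sequence from $\theta$ to $\phi$ of length at most $2$. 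If $\theta\in G$ and $\phi\in\DBL_X$, I would first move from $\theta$ to $\ta$ in a single step using the pair $(\theta,\ta)\in V\times V$ with $s_1=1_X$, namely $\theta=1_X\cdot\theta,\ 1_X\cdot\ta=\ta$, and then concatenate the sequence from Theorem~\ref{thm:DBL} that joins $\ta$ to $\phi$ inside $\DBL_X$ in at most two further steps, giving a $V$-sequence of total length at most~$3$; the case $\theta\in\DBL_X$, $\phi\in G$ is symmetric.

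Putting all four cases together yields $D_l(V,S)\le 3$, and combined with Proposition~\ref{prop:DBL1,Surj} this forces $D_l(S)=3$. The ``In particular'' statement for $\DBL_X^1$ then follows by taking $G=\{1_X\}$. There is no genuine obstacle here: the argument is a straightforward case analysis whose real content has already been extracted into Proposition~\ref{prop:DBL1,Surj} (for the lower bound and the overall upper bound of $4$) and Theorem~\ref{thm:DBL} (for bounding the distance inside $\DBL_X$); the finiteness of $G$ serves only to keep $V$ finite and to allow single-step transitions between arbitrary elements of $G$.
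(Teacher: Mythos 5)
Your proposal is correct and follows essentially the same route as the paper: the same generating set $G\cup\{\ta,\tb\}$, the bound $D_l(\DBL_X,\{\ta,\tb\})=2$ from Theorem~\ref{thm:DBL} to handle the $\DBL_X$ part, and Proposition~\ref{prop:DBL1,Surj} for the lower bound $D_l(S)\geq 3$. The only difference is that you spell out the case analysis that the paper compresses into ``it is clear that $D_l(S,U)\leq 3$''.
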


\begin{proof}
Let $S=G\cup\DBL_X$.  Since $D_l(\DBL_X,\{\ta,\tb\})=2$ (by the proof of Theorem \ref{thm:DBL}), it is clear that $\w_S^l$ is generated by the finite set $U=G\cup\{\ta,\tb\}$ and that $D_l(S,U)\leq3.$  On the other hand, we have $D_l(S)\geq3$ by Proposition \ref{prop:DBL1,Surj}.  Thus $D_l(S)=3$.
\end{proof}

We now raise the following question, concerning a natural analogue of Proposition \ref{prop:S,Inj}.

\begin{op}\label{op}
If $S$ is a subsemigroup of $\Surj_X$ containing a finitely transitive subsemigroup of $S\!\setminus\!\DBL_X$, and $\w_S^l$ is finitely generated, is $D_r(S)\geq4$?
\end{op}

The following result affirmatively answers Open Problem \ref{op} in the special case that $S$ contains $\Sym_X$.

\begin{prop}\label{prop:S,Surj}
Let $S$ be a monoid such that $\Sym_X\leq S\leq\Surj_X$.  If $\w_S^l$ is finitely generated, then $D_l(S)\geq4$.  
\end{prop}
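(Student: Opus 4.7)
The plan is to argue by contradiction: assume $\w_S^l$ is generated by a finite set $U\subseteq S$ with $D_l(S,U)\leq 3$, and construct $\th,\phi\in\Sym_X\subseteq S$ admitting no $U$-sequence of length $\leq 3$.  The setup parallels the proof of Proposition~\ref{prop:DBL1,Surj}, but tracks both endpoints through the full length-$3$ sequence rather than running the tail into the identity.

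My first step would be to collapse a hypothetical length-$3$ sequence to a single essential equation.  Given $\th,\phi\in\Sym_X$ and a sequence
\[\th=\g_1\a_1,\ \g_1\b_1=\g_2\a_2,\ \g_2\b_2=\g_3\a_3,\ \g_3\b_3=\phi,\]
the fact that $\Surj_X\setminus\Sym_X$ is an ideal of $\Surj_X$ forces $\a_1,\b_3\in U\cap\Sym_X$, together with $\g_1=\th\a_1^{-1}$ and $\g_3=\phi\b_3^{-1}$ in $\Sym_X$.  The two middle equations then become $\g_2\a_2=\th\phi_1$ and $\g_2\b_2=\phi\phi_2$, where $\phi_1=\a_1^{-1}\b_1$ and $\phi_2=\b_3^{-1}\a_3$ lie in the finite subset $V=\{\mu^{-1}\nu:\mu\in U\cap\Sym_X,\ \nu\in U\}\subseteq S$.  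In other words, the existence of a length-$3$ $U$-sequence from $\th$ to $\phi$ is equivalent to the existence of $\a,\b\in U$, $\phi_1,\phi_2\in V$ and $\g\in S$ with $\g\a=\th\phi_1$ and $\g\b=\phi\phi_2$.

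Next, I would construct $\th,\phi\in\Sym_X$ defeating every tuple $q=(\a,\b,\phi_1,\phi_2)\in U^2\times V^2$ of this reformulated condition.  Using Lemma~\ref{lem:colarge} and a straightforward induction, pick $y_\phi\in X$ for each $\phi\in V$ so that $A=\bigcup_{\phi\in V}y_\phi\phi^{-1}$ is colarge in $X$.  Then, for each tuple $q$, I would pick $z_q\in y_{\phi_2}\b^{-1}$ (nonempty since $\b\in\Surj_X$) and set $x_q=z_q\a$, arranging these choices inductively, again via Lemma~\ref{lem:colarge}, so that $D=\bigcup_q x_q\phi_1^{-1}$ is also colarge.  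Finally, using the $|X|$-transitivity of $\Sym_X$, select $\th,\phi\in\Sym_X$ so that the bijection $\th^{-1}\phi$ maps $D$ into $X\setminus A$; equivalently, $D\th^{-1}\cap A\phi^{-1}=\emptyset$.

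To close the argument, suppose a length-$3$ $U$-sequence from $\th$ to $\phi$ exists; then the reduction yields $\g\in S$ and some tuple $q=(\a,\b,\phi_1,\phi_2)$ with $\g\a=\th\phi_1$ and $\g\b=\phi\phi_2$.  Since $\g\in\Surj_X$, pick $u\in z_q\g^{-1}$; then
\[u\th\phi_1=u\g\a=z_q\a=x_q\AND u\phi\phi_2=u\g\b=z_q\b=y_{\phi_2}.\]
Hence $u\in x_q\phi_1^{-1}\th^{-1}\subseteq D\th^{-1}$ and $u\in(y_{\phi_2}\phi_2^{-1})\phi^{-1}\subseteq A\phi^{-1}$, contradicting the disjointness built into our choice of $\th,\phi$.

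The main obstacle is guaranteeing that $D$ can be made colarge despite the rigidity of the map $z_q\mapsto x_q$ when $\b\in\Sym_X$: in that case $z_q$ (and hence $x_q$) is uniquely determined by $y_{\phi_2}$, and if this forced $x_q$ happens to coincide with the (at most one) ``bad'' element identified by Lemma~\ref{lem:colarge} at the current stage, the induction stalls.  Resolving this will require interleaving the selection of the $y_\phi$'s with that of the $z_q$'s (rather than doing them in two clean passes), exploiting the flexibility afforded by varying the order in which tuples and their preimage sets are processed, while also preserving the colargeness of $A$.  This intricate combinatorial bookkeeping is the delicate technical core of the argument.
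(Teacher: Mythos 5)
Your reduction of a length-$3$ sequence to the single middle condition ($\g\a=\th\phi_1$, $\g\b=\phi\phi_2$ with $\phi_1,\phi_2\in V$) is correct and matches the paper's strategy, as does the idea of defeating every tuple via colarge sets built with Lemma \ref{lem:colarge}. But the obstacle you flag at the end is not a deferred technicality --- it is the actual core of the proof, and the fix you gesture at (interleaving the choices of the $y_\phi$'s and the $z_q$'s) does not resolve it. The problem is structural, not one of ordering: for a tuple $q=(\a,\b,\phi_1,\phi_2)$ it can happen that $\bigcup_{z\neq w}z\b^{-1}\a$ is \emph{finite} for some $w\in X$ (e.g.\ when $\a$ maps the complement of one huge kernel class of $\b$ onto a finite set). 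Then unless $y_{\phi_2}$ is chosen to be exactly that $w$, the element $x_q=z_q\a$ is confined to a fixed finite set $L_q$ no matter which $z_q\in y_{\phi_2}\b^{-1}$ you take and no matter when you process $q$; every element of $L_q$ may be the ``bad'' element in the sense of Lemma \ref{lem:colarge} (its $\phi_1$-preimage may have complement of cardinality $<|X|$), so $D$ cannot be forced to be colarge. And you cannot simply set $y_{\phi_2}=w$, since distinct degenerate tuples sharing the same $\phi_2$ may demand distinct $w$'s, while $y_{\phi_2}$ is a single shared choice also constrained by the colargeness of $A$.

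The paper resolves this with a dichotomy that your proposal is missing. It splits the tuples into those for which some $w_p$ makes $\bigcup_{z\neq w_p}z\b_2^{-1}\a_2$ finite, and the rest. For the non-degenerate tuples, the failure of this finiteness is used precisely to show that the forced element can always dodge the unique bad element of Lemma \ref{lem:colarge}, so the colargeness induction (your $D$) goes through. For the degenerate tuples an entirely different contradiction is run: one fixes the finite set $L=\bigcup_i\bigcup_{z\neq w_i}z(\b_2^{(i)})^{-1}\a_2^{(i)}$, chooses witness points $u_i\notin W$ and $v_i\notin L$, arranges $x_i\th=y_i$, and derives from the key identity $z\b_2^{-1}\a_2=z\a_3^{-1}\b_3\th\a_1^{-1}\b_1$ that $v_i\in u_i\b_2^{-1}\a_2\subseteq L$, contradicting $v_i\notin L$. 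Without an argument of this second kind for the degenerate tuples, your construction of $\th,\phi$ cannot be completed, so as it stands the proof has a genuine gap.
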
  

\begin{proof}
Suppose for a contradiction that $D_l(S,U)\leq3$ for some finite set $U\subset S$. 
Let $P$ denote the collection of all tuples $(\a_1,\b_1,\a_2,\b_2,\a_3,\b_3)\in U^6$ where $\a_1,\b_3\in \Sym_X$.  Write $P=\{p_1,\dots,p_n,\dots,p_{n+r}\},$ where $p_i=(\a_1^{(i)},\dots, \b_3^{(i)})$ ($1\leq i\leq n+r$) and $p_1,\dots,p_n$ are those tuples $p=(\a_1,\dots,\b_3)\in P$ for which there exists an element $w_p\in X$ such that the set $\bigcup_{z\neq w_p}z\b_2^{-1}\a_2$ is finite. 
For $i\in\{1,\dots,n\},$ write $w_{p_i}$ as $w_i$, and let $W=\{w_1,\dots,w_n\}.$  Also, let $L=\bigcup_{1\leq i\leq n}\bigcup_{z\neq w_i}z(\b_2^{(i)})^{-1}\a_2^{(i)}$, and note that $L$ is finite. 
We now establish the following claim.
\begin{claim*}
(1) There exist $u_i,v_i,x_i,y_i\in X$ ($1\leq i\leq n$) such that
$$u_i\notin W,\;v_i\notin L,\;x_i\in u_i(\a_3^{(i)})^{-1}\b_3^{(i)}\;\text{ and }\;y_i\in v_i(\b_1^{(i)})^{-1}\a_1^{(i)},$$
with $x_i\neq x_{i'}$ and $y_i\neq y_{i'}$ for distinct pairs $i,i'\in\{1,\dots,n\}$.\\
(2) Let $y_i$ ($1\leq i\leq n$) be as given in (1), and let $$K=\{y_i(\a_1^{(n+j)})^{-1}\b_1^{(n+j)}: 1\leq i\leq n, 1\leq j\leq r\}.$$
For each $1\leq j\leq r,$ there exist $a_j,b_j\in X$ with $b_j\in a_j(\b_2^{(n+j)})^{-1}\a_2^{(n+j)}\!\setminus\!K$ such that
$$\bigcup_{1\leq q\leq j}a_q(\a_3^{(n+q)})^{-1}\b_3^{(n+q)}\quad\text{and}\quad\bigcup_{1\leq q\leq j}b_q(\b_1^{(n+q)})^{-1}\a_1^{(n+q)}$$ are colarge.
\end{claim*}

\begin{proof}
We prove both (1) and (2) by induction.

(1) For the base case, pick any $u_1\notin W$ and $v_1\notin L,$ and then choose $x_1\in u_1(\a_3^{(1)})^{-1}\b_3^{(1)}$ and $y_1\in v_1(\b_1^{(1)})^{-1}\a_1^{(1)}$.

Now let $k\in\{2,\dots,n\},$ and assume that $u_i,v_i,x_i,y_i\in X$ ($1\leq i\leq k-1$) have been chosen such that 
$$u_i\notin W,\;v_i\notin L,\;x_i\in u_i(\a_3^{(i)})^{-1}\b_3^{(i)}\;\text{ and }\;y_i\in v_i(\b_1^{(i)})^{-1}\a_1^{(i)},$$
with $x_i\neq x_{i'}$ and $y_i\neq y_{i'}$ for distinct pairs $i,i'\in\{1,\dots,k-1\}$.
Since the map $(\b_3^{(k)})^{-1}\a_3^{(k)}$ is surjective, its set of kernel classes $\{x(\a_3^{(k)})^{-1} \b_3^{(k)} : x\in X\}$ is infinite.  Therefore, we may choose $u_k\notin W$ such that 
$$u_k(\a_3^{(k)})^{-1}\b_3^{(k)}\cap\{x_1,\dots,x_{k-1}\}=\emptyset.$$  Similarly, we may choose $v_k\notin L$ such that 
$$v_k(\b_1^{(k)})^{-1}\a_1^{(k)}\cap\{y_1,\dots,y_{k-1}\}=\emptyset.$$
Take any $x_k\in u_k(\a_3^{(k)})^{-1}\b_3^{(k)}$ and $y_k\in v_k(\b_1^{(k)})^{-1}\a_1^{(k)}$.  This completes the inductive step.\\~\\
(2) We first note that $K$ is finite.  Now, for the base case, let $a_1\in X$ be such that $a_1(\b_2^{(n+1)})^{-1}\a_2^{(n+1)}$ is not contained in $K$ (which is possible by surjectivity), and let $b_1\in a_1(\b_2^{(n+1)})^{-1}\a_2^{(n+1)}\!\setminus\!K.$  The sets $a_1(\a_3^{(n+1)})^{-1}\b_3^{(n+1)}$ and $b_1(\b_1^{(n+1)})^{-1}\a_1^{(n+1)}$ are colarge since the sets $a_1(\a_3^{(n+1)})^{-1}$ and $b_1(\b_1^{(n+1)})^{-1}$ are colarge and the maps $\b_3^{(n+1)}$ and $\a_1^{(n+1)}$ are bijections.

Now let $j\in\{2,\dots,r\},$ and assume that we have chosen $a_q,b_q\in X$ ($1\leq q\leq j-1$) with 
$b_q\in a_q(\b_2^{(n+q)})^{-1}\a_2^{(n+q)}\!\setminus\!K$ 
such that $$A_{j-1}=\bigcup_{1\leq q\leq j-1}a_q(\a_3^{(n+q)})^{-1}\b_3^{(n+q)}\quad\text{and}\quad B_{j-1}=\bigcup_{1\leq q\leq k-1}b_q(\b_1^{(n+q)})^{-1}\a_1^{(n+q)}$$
are colarge.  Observing that for any $\a,\b\in\B_X$ we have $\a^{-1}\b=(\b^{-1}\a)^{-1}$, by Lemma \ref{lem:colarge} there exists at most one element $c_j$ such that the set 
$A_{j-1}\cup c_j(\a_3^{(n+j)})^{-1}\b_3^{(n+j)}$
is {\em not} colarge, and at most one element $d_j$ such that $B_{j-1}\cup d_j(\b_1^{(n+j)})^{-1}\a_1^{(n+j)}$ is not colarge.  Let $K'=K\cup\{d_j\}$ if $d_j$ exists; otherwise, let $K'=K.$
Now, if $c_j$ were the only element of $X$ such that 
$$c_j(\b_2^{(n+j)})^{-1}\a_2^{(n+j)}\not\subseteq K',$$
then we would have $\bigcup_{z\neq c_j}z(\b_2^{(n+j)})^{-1}\a_2^{(n+j)}\subseteq K',$ which is finite, contradicting the fact that $p_{n+j}$ is not one of the tuples $p_1,\dots,p_n$. 
Hence, we may pick $a_j\in X$ (with $a_j\neq c_j$ if $c_j$ exists) such that
$$A_{j-1}\cup a_j(\a_3^{(n+j)})^{-1}\b_3^{(n+j)}=\bigcup_{1\leq q\leq j}a_q(\a_3^{(n+q)})^{-1}\b_3^{(n+q)}$$ 
is colarge, and $a_j(\b_2^{(n+j)})^{-1}\a_2^{(n+j)}$ possesses an element $b_j\notin K'$.  Then
$$B_{j-1}\cup b_j(\b_1^{(n+j)})^{-1}\a_1^{(n+j)}=\bigcup_{1\leq q\leq j}b_q(\b_1^{(n+q)})^{-1}\a_1^{(n+q)}$$ 
is colarge.  This completes the inductive step.  
\end{proof}
 
We fix the elements $u_i,v_i,x_i,y_i,a_j,b_j\in X$ ($1\leq i\leq n$, $1\leq j\leq r$) and the set $K,$ as given in the above claim, for the remainder of this proof.  Let 
$$A=\bigcup_{1\leq j\leq r}a_j(\a_3^{(n+j)})^{-1}\b_3^{(n+j)}\qquad\text{and}\qquad B=\bigcup_{1\leq j\leq r}b_j(\b_1^{(n+j)})^{-1}\a_1^{(n+j)}.$$
Choose $\th\in\Sym_X$ such that 
$$x_i\th=y_i\;\;(1\leq i\leq n)\quad\text{and}\quad(A\!\setminus\!\{x_i : 1\leq i\leq n\})\th\subseteq X\!\setminus\!B.$$ 
(Such a bijection exists since the elements $x_1,\dots,x_n$ are distinct, the elements $y_1,\dots,y_n$ are distinct, and the sets $A\!\setminus\!\{x_i : 1\leq i\leq n\}$ and $B$ are both colarge.)  Since $D_l(S,U)\leq3$, there exists a $U$-sequence
$$\th=\g_1\a_1,\;\g_1\b_1=\g_2\a_2,\;\g_2\b_2=\g_3\a_3,\;\g_3\b_3=1_X.$$
As $\Surj_X\!\setminus\!\Sym_X$ is an ideal of $\Surj_X$, we have $\g_1,\a_1,\g_3,\b_3\in\Sym_X$ with $\g_1=\th\a_1^{-1}$ and $\g_3=\b_3^{-1}$.  Thus $p=(\a_1,\b_1,\a_2,\b_2,\a_3,\b_3)\in P,$ and we have
$$\th\a_1^{-1}\b_1=\g_2\a_2,\;\g_2\b_2=\b_3^{-1}\a_3.$$
Now, for any $z\in X,$
\begin{align*}
y\in z\b_2^{-1}\a_2&\Leftrightarrow\text{there exists }x\in X\text{ such that }x\b_2=z, y=x\a_2\\
&\Leftrightarrow\text{there exists }x'\in X\text{ such that }x'\g_2\b_2=z, y=x'\g_2\a_2\:\:\text{(since $\g_2$ is surjective)}\\
&\Leftrightarrow y\in z(\g_2\b_2)^{-1}(\g_2\a_2)\\
&\Leftrightarrow y\in z(\b_3^{-1}\a_3)^{-1}(\th\a_1^{-1}\b_1)\\
&\Leftrightarrow y\in z\a_3^{-1}\b_3\th\a_1^{-1}\b_1. 
\end{align*}
Thus, for each $z\in X$ we have
\begin{equation}\label{eq:surj}
z\b_2^{-1}\a_2=z\a_3^{-1}\b_3\th\a_1^{-1}\b_1. 
\end{equation}
Suppose first that $p=p_i$ where $i\in\{1,\dots,n\}$ (so $\a_k=\a_k^{(i)},$ $\b_k=\b_k^{(i)}$ for $k=1,2,3$).
Then $u_i\notin W,$ so $u_i\b_2^{-1}\a_2\subseteq L.$
But then we have
$$v_i=y_i\a_1^{-1}\b_1=x_i\th\a_1^{-1}\b_1\in u_i \a_3^{-1}\b_3\th\a_1^{-1}\b_1=u_i\b_2^{-1}\a_2\subseteq L,$$
contradicting the choice of $v_i$.

Now suppose that $p=p_{n+j}$ where $j\in\{1,\dots,r\}$ (so $\a_k=\a_k^{(n+j)},$ $\b_k=\b_k^{(n+j)}$ for $k=1,2,3$).  Then we have
\begin{align*} 
b_j\in a_j\b_2^{-1}\a_2=a_j\a_3^{-1}\b_3\th\a_1^{-1}\b_1
&\subseteq A\th\a_1^{-1}\b_1\\
&\subseteq\big((A\!\setminus\!\{x_i : 1\leq i\leq n\})\cup\{x_i : 1\leq i\leq n\}\big)\th\a_1^{-1}\b_1\\
&\subseteq(X\!\setminus\!B)\a_1^{-1}\b_1\cup\{y_i\a_1^{-1}\b_1 : 1\leq i\leq n\}\\
&\subseteq(X\!\setminus\!b_j\b_1^{-1}\a_1)\a_1^{-1}\b_1\cup K\\
&\subseteq(X\!\setminus\!\{b_j\})\cup K=X\!\setminus\!\{b_j\},
\end{align*}
where the first equality is due to \eqref{eq:surj} and the final equality is due to the fact that $b_j\notin K.$  Again, we have a contradiction.  Thus $D_l(S)\geq4$.
\end{proof}

By Propositions \ref{prop:DBL1,Surj}  and \ref{prop:S,Surj}, we have:

\begin{thm}\label{thm:S,Surj}
If $S$ is a monoid such that $\Sym_X\cup\DBL_X\leq S\leq\Surj_X,$ then $D_l(S)=4.$  In particular, the monoids $\Sym_X\cup\DBL_X$ and $\Surj_X$ have left diameter 4.
\end{thm}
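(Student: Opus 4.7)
The plan is to combine the two preceding propositions to pin down $D_l(S)$ exactly. Both $\Sym_X\cup\DBL_X$ and $\Surj_X$ are monoids lying between $\Sym_X\cup\DBL_X$ and $\Surj_X$, so it suffices to prove the general statement for any such $S$.

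First I would observe that the hypothesis $\Sym_X\cup\DBL_X\leq S\leq\Surj_X$ implies $\DBL_X^1\leq S$: indeed $S$ is a monoid, so $1_X\in S$, and $\DBL_X\subseteq S$ by assumption, giving $\DBL_X^1=\DBL_X\cup\{1_X\}\subseteq S$. Hence Proposition \ref{prop:DBL1,Surj} applies and yields both that $\w_S^l$ is finitely generated and that $D_l(S)\in\{3,4\}$; in particular $D_l(S)\leq 4$.

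Next, since $\Sym_X\leq S\leq\Surj_X$ and we now know $\w_S^l$ is finitely generated, Proposition \ref{prop:S,Surj} applies directly and gives the lower bound $D_l(S)\geq 4$. Combining the two bounds yields $D_l(S)=4$, as required.

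For the ``in particular'' claim, I would just note that $\Sym_X\cup\DBL_X$ is itself a monoid (it contains $1_X\in\Sym_X$ and is closed under composition, since $\Sym_X$ is a subgroup of $\Sym_X$, $\DBL_X$ is an ideal of $\Surj_X$, and $\Sym_X\DBL_X\cup\DBL_X\Sym_X\subseteq\DBL_X$), and $\Surj_X$ is a monoid containing $\Sym_X\cup\DBL_X$; thus both lie in the range covered by the general statement, so each has left diameter $4$. The whole argument is therefore purely a matter of assembling the two earlier propositions, with no genuine obstacle; the substantive work has already been done in Propositions \ref{prop:DBL1,Surj} and \ref{prop:S,Surj}.
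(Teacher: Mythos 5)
Your proposal is correct and matches the paper's own argument, which likewise derives the theorem by combining Proposition \ref{prop:DBL1,Surj} (upper bound and finite generation of $\w_S^l$) with Proposition \ref{prop:S,Surj} (lower bound). The extra verifications you include — that $\DBL_X^1\leq S$ and that $\Sym_X\cup\DBL_X$ is a monoid — are correct and consistent with facts already recorded in the paper.
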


Unfortunately, we have not obtained an analogue of Theorem \ref{thm:Sym,Inj}, classifying those subsemigroups of $\Surj_X$ containing $\Sym_X$ that have left diameter 4.  We conclude this section by considering a potential such classification.

First, for a map $\a\in\T_X$ define 
$$K(\a)=\{x\in X: |x\a^{-1}|=|X|\},$$
and let $k(\a)=|K(\a)|.$  The set $K(\a)$ and parameter $k(\a)$ were introduced in \cite{Howie:1998}.
Note that $\DBL_X=\{\a\in\Surj_X : K(\a)=X\}.$

\begin{prop}\label{prop:containDBL}
Let $S$ be a monoid such that $\Sym_X\leq S\leq\Surj_X$.  Then $S$ contains $\DBL_X$ if and only if there exists some $\a\in S$ such that $k(\a)=|X|.$
\end{prop}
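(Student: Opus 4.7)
The forward direction is immediate: if $\DBL_X \subseteq S$, then any $\b \in \DBL_X$ lies in $S$ and satisfies $K(\b) = X$, so $k(\b) = |X|$.

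For the converse, suppose $\a \in S$ with $k(\a) = |X|$. My overall strategy is to first produce a single element of $\DBL_X$ lying inside $S$, and then propagate from this one element to all of $\DBL_X$ using $\Sym_X \leq S$. To produce such an element, I look for $\tau \in \Sym_X$ with $\a\tau\a \in \DBL_X$, noting that this composite automatically lies in $S$. A direct computation yields, for each $y \in X$,
$$y(\a\tau\a)^{-1} = \bigsqcup_{w \in (y\a^{-1})\tau^{-1}} w\a^{-1},$$
which has cardinality $|X|$ as soon as $(y\a^{-1})\tau^{-1}$ meets $K(\a)$, since $|w\a^{-1}| = |X|$ for $w \in K(\a)$. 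Using the axiom of choice, I pick a transversal $T = \{t_y : y \in X\}$ for the kernel partition $\{y\a^{-1} : y \in X\}$, and reduce the task to finding $\tau \in \Sym_X$ with $T\tau^{-1} \subseteq K(\a)$.

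The main obstacle is a short cardinal-arithmetic verification that such a $\tau$ exists. The key observation is that $|X \setminus T| = |X|$: for any $y \in K(\a)$ the set $y\a^{-1} \setminus \{t_y\}$ has cardinality $|X|$ and lies inside $X \setminus T$. Since $|K(\a)| = |X|$ is infinite, I decompose $K(\a) = T' \sqcup R$ with $|T'| = |R| = |X|$, whence $|X \setminus T'| \geq |R| = |X|$. Any bijection $T \to T'$, combined with any bijection $X \setminus T \to X \setminus T'$, then assembles into a bijection $\tau^{-1} \in \Sym_X$ with $T\tau^{-1} = T' \subseteq K(\a)$, so that $\b := \a\tau\a \in \DBL_X \cap S$.

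Finally, to conclude $\DBL_X \subseteq S$, I show that for every $\b' \in \DBL_X$ there exists $\s \in \Sym_X$ with $\b' = \s\b$, which then forces $\b' \in S$. For each $y \in X$ both $y(\b')^{-1}$ and $y\b^{-1}$ have cardinality $|X|$, so an arbitrary choice of bijections $y(\b')^{-1} \to y\b^{-1}$ assembles (using that both families partition $X$) into the required $\s \in \Sym_X$, completing the argument.
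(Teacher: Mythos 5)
Your proof is correct and follows essentially the same route as the paper's: both produce an element of $\DBL_X\cap S$ of the form $\a\cdot(\text{permutation})\cdot\a$, where the permutation carries a transversal of the kernel classes of $\a$ into a subset of $K(\a)$ of cardinality $|X|$ whose complement is also of cardinality $|X|$, and both then obtain all of $\DBL_X$ by writing an arbitrary $\b'\in\DBL_X$ as $\s\b$ for a permutation $\s$ matching up the two kernel partitions. The only differences are notational (your $\tau^{-1}$ plays the role of the paper's $\pi$, and your transversal $T$ and set $T'$ correspond to the paper's $A$ and $Y$).
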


\begin{proof}
We have already observed that $K(\a)=X$ for any $\a\in\DBL_X$, so the forward direction clearly holds.

For the reverse implication, let $Y$ be any subset of $K(\a)$ such that $|Y|=|X\!\setminus\!Y|=|X|$, and fix a bijection $X\to Y, x\mapsto y_x$.  For each $x\in X$ fix some $a_x\in x\a^{-1}$, and note that the set $A=\{a_x : x\in X\}$ satisfies $|A|=|X\!\setminus\!A|=|X|$, as $K(\a)\not=\emptyset$.  The map $Y\to A, y_x\mapsto a_x$ can therefore be extended to a bijection $\pi\in\Sym_X$.  We then see that $\b=\a\pi\a\in S$ belongs to $\DBL_X$.  Indeed, for any $x\in X$ we have $x\b^{-1}=x\a^{-1}\pi^{-1}\a^{-1}\supseteq a_x\pi^{-1}\a^{-1}=y_x\a^{-1}$, and $|y_x\a^{-1}|=|X|$ as $y_x\in Y\sub K(\a)$.  

Finally, let $\g\in\DBL_X$ be arbitrary.  Take any bijection $\psi\in\Sym_X\subseteq S$ such that $(x\g^{-1})\psi=x\b^{-1}$ for each $x\in X$.  Then, for each $x\in X,$ we have $x\psi\in\big((x\g)\g^{-1})\psi=(x\g)\b^{-1},$ so that $x\g=(x\psi)\b=x(\psi\b).$  Thus $\g=\psi\b\in S,$ and hence $\DBL_X\subseteq S.$
\end{proof}

\begin{op}
For a monoid $S$ such that $\Sym_X\leq S\leq\Surj_X$, are the following equivalent?
\begin{enumerate}
\item $D_l(S)=4$;
\item $\w_S^l$ is finitely generated;
\item there exists $\a\in S$ such that $k(\a)=|X|$;
\item $S$ contains $\DBL_X$.
\end{enumerate}
(1)$\Rightarrow$(2) certainly holds, we have (3)$\Leftrightarrow$(4) by Proposition \ref{prop:containDBL}, and (4)$\Rightarrow$(1) follows from Theorem \ref{thm:S,Surj}.  Thus, to answer this question in the affirmative, it would suffice to prove that (2) implies (3).
\end{op}

\section{Monoids of Partitions}
\label{sec:partition}

In this section we consider the partition monoid $\P_X$ and the partial Brauer monoid $\PB_X$ (where $X$ is an arbitrary infinite set).

The {\em partition monoid} $\P_X$ consists of all set partitions of $X\cup X'$, where $X'=\{x' : x\in X\}$ is a disjoint copy of $X.$  So, an element of $\P_X$ is of the form $\a=\{A_i : i\in I\},$ where the $A_i$ are non-empty, pairwise disjoint subsets of $X\cup X'$ such that $X\cup X'=\bigcup_{i\in I}A_i$; the $A_i$ are called the {\em blocks} of $\a.$    
An element of $\P_X$ may be represented as a graph with vertices $X\cup X'$ whose connected components are the blocks of the partition; when depicting such a graph, vertices from $X$ and $X'$ are displayed on upper and lower rows, respectively.  It is from this graph-theoretic viewpoint that we define the product in $\P_X$.

Let $\a,\b\in\P_X$.  Introduce another copy $X''=\{x'' : x\in X\}$ of $X,$ disjoint from $X\cup X'$.   Denote by $\a_{\downarrow}$ the graph obtained from $\a$ by replacing every $x'$ with $x''$, and denote by $\b^{\uparrow}$ the graph obtained from $\b$ by replacing every $x$ with $x''$.  The product graph $\Pi(\a,\b)$ is the graph with vertex set $X\cup X''\cup X'$ and edge set the union of the edge sets of $\a_{\downarrow}$ and $\b^{\uparrow}$.  The graph $\Pi(\a,\b)$ is drawn with vertices from $X''$ displayed in a new middle row.
We define $\a\b$ to be the partition of $X\cup X'$ such that $u,v\in X\cup X'$ belong to the same block if and only if $u$ and $v$ belong to the same connected component of $\Pi(a,b)$.  We illustrate this product in Figure \ref{fig:P_X} (using elements from a finite partition monoid).  Under this product, $\P_X$ is a monoid with identity $\big\{\{x,x'\} : x\in X\big\}.$ 

The {\em partial Brauer monoid} $\PB_X$ is the submonoid of $\P_X$ consisting of all partitions whose blocks have size at most 2.  (Thus, the partition $\b$ in Figure \ref{fig:P_X} in fact belongs to $\PB_6$.)

\begin{figure}[h]
\begin{center}
\begin{tikzpicture}[scale=.5]
\begin{scope}[shift={(0,0)}]
\uvs{1,...,6}
\lvs{1,...,6}
\uarcx14{.6}
\uarcx23{.3}
\uarcx45{.3}
\uarcx56{.3}
\darcx26{.6}
\darcx45{.3}
\stline34
\draw(0.6,1)node[left]{$\a=$};
\draw[->](7.5,-1)--(9.5,-1);
\end{scope}
\begin{scope}[shift={(0,-4)}]
\uvs{1,...,6}
\lvs{1,...,6}
\uarc13
\uarc56
\darc45
\stline21
\stline43
\draw(0.6,1)node[left]{$\b=$};
\end{scope}
\begin{scope}[shift={(10,-1)}]
\uvs{1,...,6}
\lvs{1,...,6}
\uarcx14{.6}
\uarcx23{.3}
\uarcx45{.3}
\uarcx56{.3}
\darcx26{.6}
\darcx45{.3}
\stline34
\draw[->](7.5,0)--(9.5,0);
\end{scope}
\begin{scope}[shift={(10,-3)}]
\uvs{1,...,6}
\lvs{1,...,6}
\uarc13
\uarc56
\darc45
\stline21
\stline43
\end{scope}
\begin{scope}[shift={(20,-2)}]
\uvs{1,...,6}
\lvs{1,...,6}
\uarcx14{.6}
\uarcx23{.3}
\uarcx45{.3}
\uarcx56{.3}
\darc13
\darc45
\stline33
\draw(6.4,1)node[right]{$=\a\b$};
\end{scope}
\end{tikzpicture}
\caption{Two partitions $\a,\b\in\P_6$ (left), the product graph $\Pi(a,b)$ (middle) and the product $\a\b$ (right).}
\label{fig:P_X}
\end{center}
\end{figure}

It turns out that both $\P_X$ and $\PB_X$ have right diameter 1 and left diameter 1, as follows from the following stronger result.

\begin{thm}\label{thm:P_X}
The diagonal right act and diagonal left act of both $\P_X$ and $\PB_X$ are monogenic.  Consequently, both $\P_X$ and $\PB_X$ have right diameter 1 and left diameter 1.
\end{thm}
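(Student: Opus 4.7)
The plan is to prove monogenicity of each of the four diagonal acts by producing an explicit generating pair, from which the diameter-$1$ statements then follow via Proposition~\ref{prop:diagonal,diameter}.

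For the diagonal right $\P_X$-act, I would mimic the construction used for transformation monoids in Section~\ref{sec:right}, with adaptations to handle the bottom-row partition data intrinsic to partitions. Fix a disjoint decomposition $X=A\sqcup B$ with $|A|=|B|=|X|$ and bijections $\hat a\colon X\to A$, $\hat b\colon X\to B$, and set up a generating pair $(\alpha,\beta)$ whose blocks include the natural top-to-bottom pairs $\{x,(x\hat a)'\}$ (respectively $\{x,(x\hat b)'\}$), together with auxiliary blocks within $X'$ that allow the cokernel of $\alpha\sigma$ to coarsen that of $\sigma$ independently of how $\beta$ does so. For $(\gamma,\delta)\in\P_X\times\P_X$, the witness partition $\sigma$ is built by taking each $\gamma$-block $C$ and producing a $\sigma$-block $C^X\hat a\cup C^{X'}$, each $\delta$-block $D$ producing $D^X\hat b\cup D^{X'}$, and amalgamating these ghost blocks whenever they share a bottom vertex. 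A block-by-block analysis of the product graph $\Pi(\alpha,\sigma)$ (respectively $\Pi(\beta,\sigma)$) then verifies $\alpha\sigma=\gamma$ and $\beta\sigma=\delta$. For the diagonal left act, I would use the dual construction based on a bijection $\tilde\nu\colon X\to X\times X$ as in Section~\ref{sec:left}, producing a pair $(\alpha',\beta')$ whose left action extracts the first-coordinate (respectively second-coordinate) component of a witness $\sigma$ built from $\gamma$ and $\delta$ by a pairing directly analogous to that defining $\tilde\gamma(\theta,\phi)$.

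The partial Brauer case $\PB_X$ requires a further refinement: one must exhibit generators $(\alpha,\beta)$ in $\PB_X$ whose witnesses can also be chosen in $\PB_X$. The amalgamation step above can in principle produce blocks of size $3$, so either a different amalgamation scheme (using the bottom blocks of $\alpha,\beta$ to broadcast connections more efficiently) or a different pair of generators is needed; in either approach, the crux is checking that for $\gamma,\delta\in\PB_X$ one can arrange a single $\sigma\in\PB_X$ realising both products simultaneously.

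The main obstacle is the simultaneous encoding of two unrelated bottom-row (respectively top-row) partitions inside a single $\sigma$, a tension that simply does not arise in the transformation-monoid setting of Sections~\ref{sec:right} and \ref{sec:left} because transformations carry no bottom-row partition data. The resolution is to push the necessary coarsening information into $\alpha,\beta$ themselves rather than into $\sigma$; the key technical verification will then be that the connected components of $\Pi(\alpha,\sigma)$ and $\Pi(\beta,\sigma)$ compute, respectively, the blocks of $\gamma$ and $\delta$ exactly, with no unintended mergings arising from the $B$-side (respectively $A$-side) of $\sigma$ acting as a conduit.
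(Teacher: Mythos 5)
There is a genuine gap in the core construction for the diagonal right act. Your witness $\sigma$ places the bottom-row data of \emph{both} $\gamma$ and $\delta$ directly on $X'$, and then amalgamates ghost blocks sharing a bottom vertex. This amalgamation destroys the product: if $\gamma$ has blocks $C_1=\{1,1'\}$ and $C_2=\{2,2'\}$ while $\delta$ has a block $D=\{1,1',2'\}$, then the ghosts of $C_1$ and $C_2$ are both amalgamated with the ghost of $D$ (via $1'$ and $2'$ respectively), so $1\hat a$ and $2\hat a$ land in a single $\sigma$-block, and in $\Pi(\alpha,\sigma)$ the top vertices $1$ and $2$ become connected; hence $\alpha\sigma$ is strictly coarser than $\gamma$. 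You do flag exactly this tension at the end, but the proposed resolution --- ``push the necessary coarsening information into $\alpha,\beta$ themselves'' --- cannot work as stated, because $\alpha$ and $\beta$ are fixed once and for all and must serve every pair $(\gamma,\delta)$ simultaneously; only $\sigma$ is allowed to depend on $(\gamma,\delta)$. The $\PB_X$ case is likewise left open rather than proved.

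For comparison, the paper avoids the amalgamation problem entirely by using a \emph{five}-part decomposition $X=A\sqcup B\sqcup C\sqcup D\sqcup E$: the partition $\th$ is encoded as a partition $\lam(\th)$ of $A\cup B$ (top of $\th$ into $A$, bottom of $\th$ into $B$) and $\phi$ as a partition $\rho(\phi)$ of $D\cup E$, so the two encodings live on \emph{disjoint} subsets of the top row of $\g$ and never interact; the actual bottom row $X'$ of $\g$ is reached only through the ``conduit'' blocks $\{c_x,x'\}$, and $\a$ routes $B$ through $C$ to $X'$ (via blocks $\{b_x',c_x'\}$) while isolating $D\cup E$ with singleton blocks, and dually for $\b$. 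Since each block of $\th$ yields exactly one block of $\lam(\th)$ of the same size, the witness automatically lies in $\PB_X$ when $\th,\phi$ do, which disposes of the partial Brauer case for free. Finally, the paper handles the left act not by a dual construction but by the involution $\a\mapsto\a^*$ of $\P_X$ (restricting to $\PB_X$), which converts monogenicity of the right diagonal act directly into monogenicity of the left one; your dual construction via $\tilde\nu$ would face the same unresolved encoding problem on the top row.
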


\begin{proof}
The partition monoid $\P_X$ has an involution $*: \P_X\to\P_X,$ $\a\to\a^*$, where $\a^*$ is obtained from $\a$ by swapping dashed and undashed vertices (pictorially, $\a^*$ is `$\a$ upside-down'), and this map restricts to an involution on $\PB_X$.  It follows that for all partitions $\th,\phi,\a,\b,\g\in\P_X$ (or $\PB_X$) we have 
$$(\th,\phi)=(\a,\b)\g\;\Leftrightarrow\;(\th^*,\phi^*)=\g^*(\a^*,\b^*),$$
and hence the diagonal right act of $\P_X$ (resp.\ $\PB_X$) is monogenic if and only if the diagonal left act of $\P_X$ (resp.\ $\PB_X$) is monogenic.
Thus, it suffices to show that the diagonal right acts of $\P_X$ and $\PB_X$ are monogenic.

Divide $X$ into five subsets as follows:
$$X=A\sqcup B\sqcup C\sqcup D\sqcup E \qquad\text{where}\qquad|A|=|B|=|C|=|D|=|E|=|X|.$$
Write
$$
A=\{a_x : x\in X\},\quad B=\{b_x :x\in X\},\quad\text{etc}.$$
Define (and fix) $\a,\b\in\P_X$ as follows:
$$\a=\big\{\{x,a_x'\} : x\in X\}\cup\big\{\{b_x',c_x'\} : x\in X\big\}\cup\big\{\{d_x'\} : x\in X\big\}\cup \big\{\{e_x'\} : x\in X\big\};$$
$$\b=\big\{\{x,e_x'\big\} : x\in X\}\cup\big\{\{c_x',d_x'\} : x\in X\big\}\cup\big\{\{a_x'\} : x\in X\big\}\cup\big\{\{b_x'\} : x\in X\big\}.$$
See Figure \ref{fig:1}, and note that in fact $\a$ and $\b$ belong to $\PB_X$.

For each set $T=U\cup V'$ where $U,V\subseteq X,$ define
\[\lam(T)=\{a_u : u\in U\}\cup\{b_v : v\in V\}\qquad\text{and}\qquad\rho(T)=\{e_u : u\in U\}\cup\{d_v : v\in V\}.\]
Consider $\th,\phi\in\P_X$.  Set
\[\lam(\th)=\big\{\lam(T) : T\in\th\big\}\qquad\text{and}\qquad\rho(\phi)=\big\{\rho(T) : T\in\phi\big\}.\]
Note that $\lam(\th)$ and $\rho(\phi)$ are partitions of $A\cup B$ and $D\cup E$, respectively.  Now define
\[\g=\g(\th,\phi)=\lam(\th)\cup\big\{\{c_x,x'\} : x\in X\big\}\cup\rho(\phi).\]
Then $\g\in\P_X$ and $(\th,\phi)=(\a,\b)\g$; see Figures \ref{fig:2} and \ref{fig:3}.  Moreover, if $\th,\phi\in\PB_X$ then $\g\in\PB_X$.  Thus, we have $\P_X\times\P_X=(\a,\b)\P_X$ and $\PB_X\times\PB_X=(\a,\b)\PB_X$.
\end{proof}

\begin{figure}[H]
\begin{center}
\scalebox{0.75}{
\begin{tikzpicture}[scale=0.38]
\begin{scope}
\foreach \x/\y in {0/A,1/B,2/C,3/D,4/E} {\draw[|-|] (5*\x,-1.5) -- (5*\x+4,-1.5); \node () at (5*\x+2,-2) {$\y$};}
\fill[blue!20] (0,0)--(4,0)--(24,15)--(0,15)--(0,0);
{\nc\p5 \nc\q{1} \nc\rr{14} 
\fill[blue!20] (\p,0) arc(180:90:\q) (\p+\q,\q)--(\rr-\q,\q) arc(90:0:\q) -- (\p,0);
}
{\nc\p9 \nc\q{.3} \nc\rr{10} 
\fill[white] (\p,0) arc(180:90:\q) (\p+\q,\q)--(\rr-\q,\q) arc(90:0:\q) -- (\p,0);
}
\foreach \x in {0,1,2,3} {\lv{15+0.5*\x} \lv{20+0.5*\x}}
\foreach \x in {0,1} {\draw[dotted](17+5*\x,0)--(19+5*\x,0);}
\fill (10,15)circle(.17);
\lv1
\draw(10,15)--(1,0);
\lv6
\lv{13}
\darcx6{13}{.6}
\node () at (10,15.5) {$x$};
\node () at (1,-.7) {$a_x$};
\node () at (6,-.7) {$b_x$};
\node () at (13,-.7) {$c_x$};
\end{scope}
\begin{scope}[shift={(30,0)}]
\foreach \x/\y in {0/A,1/B,2/C,3/D,4/E} {\draw[|-|] (5*\x,-1.5) -- (5*\x+4,-1.5); \node () at (5*\x+2,-2) {$\y$};}
\fill[blue!20] (24-0,0)--(24-4,0)--(24-24,15)--(24-0,15)--(24-0,0);
{\nc\p{24-14} \nc\q{1} \nc\rr{24-5} 
\fill[blue!20] (\p,0) arc(180:90:\q) (\p+\q,\q)--(\rr-\q,\q) arc(90:0:\q) -- (\p,0);
}
{\nc\p{24-10} \nc\q{.3} \nc\rr{24-9} 
\fill[white] (\p,0) arc(180:90:\q) (\p+\q,\q)--(\rr-\q,\q) arc(90:0:\q) -- (\p,0);
}
\foreach \x in {0,1,2,3} {\lv{0+0.5*\x} \lv{5+0.5*\x}}
\foreach \x in {0,1} {\draw[dotted](2+5*\x,0)--(4+5*\x,0);}
\fill (10,15)circle(.17);
\lv{23}
\draw(10,15)--(23,0);
\lv{11}
\lv{18}
\darcx{11}{18}{.6}
\node () at (10,15.5) {$x$};
\node () at (23,-.7) {$e_x$};
\node () at (18,-.7) {$d_x$};
\node () at (11,-.7) {$c_x$};
\end{scope}
\end{tikzpicture}}
\vspace{-1em}
\caption{The partitions $\a,\b\in\PB_X$ (left and right, respectively).}
\label{fig:1}
\end{center}
\end{figure}

\begin{figure}[H]
\begin{center}
\scalebox{0.75}{
\begin{tikzpicture}[scale=0.38]
\begin{scope}[shift={(0,-45)}]
\foreach \x/\y in {0/A,1/B,2/C,3/D,4/E} {\draw[|-|] (5*\x,16.5) -- (5*\x+4,16.5); \node () at (5*\x+2,17) {$\y$};}
\fill[red!20] (0,0)--(24,0)--(14,15)--(10,15)--(0,0);
\fill (12,15)circle(.17);
\lv{7}
\draw(12,15)--(7,0);
\node () at (12,15.5) {$c_x$};
\node () at (7,-.7) {$x$};
{\nc\p{0} \nc\q{2} \nc\rr{9} 
\fill[red!20] (\p,15) arc(180:270:\q) (\p+\q,15-\q)--(\rr-\q,15-\q) arc(270:360:\q) -- (\p,15);
}
{\nc\p{4} \nc\q{.4} \nc\rr{5} 
\fill[white] (\p,15) arc(180:270:\q) (\p+\q,15-\q)--(\rr-\q,15-\q) arc(270:360:\q) -- (\p,15);
}
{\nc\p{15+0} \nc\q{2} \nc\rr{15+9} 
\fill[red!20] (\p,15) arc(180:270:\q) (\p+\q,15-\q)--(\rr-\q,15-\q) arc(270:360:\q) -- (\p,15);
}
{\nc\p{15+4} \nc\q{.4} \nc\rr{15+5} 
\fill[white] (\p,15) arc(180:270:\q) (\p+\q,15-\q)--(\rr-\q,15-\q) arc(270:360:\q) -- (\p,15);
}
\node () at (4.5,13.8) {$\lam(\th)$};
\node () at (15+4.5,13.8) {$\rho(\phi)$};
\end{scope}
\end{tikzpicture}}
\vspace{-1em}
\caption{The partition $\g=\g(\th,\phi)\in\P_X$.}
\label{fig:2}
\end{center}
\end{figure}

\begin{figure}[H]
\begin{center}
\scalebox{0.75}{
\begin{tikzpicture}[scale=0.3]
\begin{scope}[shift={(0,-15)}]
\fill[red!20] (0,0)--(24,0)--(14,15)--(10,15)--(0,0);
\draw[|-|] (-1,15)--(-1,0);
\node () at (-1.6,7.5) {$\g$};
{\nc\p{0} \nc\q{2} \nc\rr{9} 
\fill[red!20] (\p,15) arc(180:270:\q) (\p+\q,15-\q)--(\rr-\q,15-\q) arc(270:360:\q) -- (\p,15);
}
{\nc\p{4} \nc\q{.4} \nc\rr{5} 
\fill[white] (\p,15) arc(180:270:\q) (\p+\q,15-\q)--(\rr-\q,15-\q) arc(270:360:\q) -- (\p,15);
}
{\nc\p{15+0} \nc\q{2} \nc\rr{15+9} 
\fill[red!20] (\p,15) arc(180:270:\q) (\p+\q,15-\q)--(\rr-\q,15-\q) arc(270:360:\q) -- (\p,15);
}
{\nc\p{15+4} \nc\q{.4} \nc\rr{15+5} 
\fill[white] (\p,15) arc(180:270:\q) (\p+\q,15-\q)--(\rr-\q,15-\q) arc(270:360:\q) -- (\p,15);
}
\node () at (4.5,13.8) {$\lam(\th)$};
\node () at (15+4.5,13.8) {$\rho(\phi)$};
\end{scope}
\begin{scope}[shift={(35,-15)}]
\fill[red!20] (0,0)--(24,0)--(14,15)--(10,15)--(0,0);
\draw[|-|] (26,15)--(26,0);
\node () at (26.6,7.5) {$\g$};
{\nc\p{0} \nc\q{2} \nc\rr{9} 
\fill[red!20] (\p,15) arc(180:270:\q) (\p+\q,15-\q)--(\rr-\q,15-\q) arc(270:360:\q) -- (\p,15);
}
{\nc\p{4} \nc\q{.4} \nc\rr{5} 
\fill[white] (\p,15) arc(180:270:\q) (\p+\q,15-\q)--(\rr-\q,15-\q) arc(270:360:\q) -- (\p,15);
}
{\nc\p{15+0} \nc\q{2} \nc\rr{15+9} 
\fill[red!20] (\p,15) arc(180:270:\q) (\p+\q,15-\q)--(\rr-\q,15-\q) arc(270:360:\q) -- (\p,15);
}
{\nc\p{15+4} \nc\q{.4} \nc\rr{15+5} 
\fill[white] (\p,15) arc(180:270:\q) (\p+\q,15-\q)--(\rr-\q,15-\q) arc(270:360:\q) -- (\p,15);
}
\node () at (4.5,13.8) {$\lam(\th)$};
\node () at (15+4.5,13.8) {$\rho(\phi)$};
\end{scope}
\begin{scope}
\fill[blue!20] (0,0)--(4,0)--(24,15)--(0,15)--(0,0);
{\nc\p5 \nc\q{1} \nc\rr{14} 
\fill[blue!20] (\p,0) arc(180:90:\q) (\p+\q,\q)--(\rr-\q,\q) arc(90:0:\q) -- (\p,0);
}
{\nc\p9 \nc\q{.3} \nc\rr{10} 
\fill[white] (\p,0) arc(180:90:\q) (\p+\q,\q)--(\rr-\q,\q) arc(90:0:\q) -- (\p,0);
}
\foreach \x in {0,1,2,3} {\lv{15+0.5*\x} \lv{20+0.5*\x}}
\foreach \x in {0,1} {\draw[dotted](17+5*\x,0)--(19+5*\x,0);}
\draw[|-|] (-1,15)--(-1,0);
\node () at (-1.6,7.5) {$\a$};
\end{scope}
\begin{scope}[shift={(35,0)}]
\fill[blue!20] (24-0,0)--(24-4,0)--(24-24,15)--(24-0,15)--(24-0,0);
{\nc\p{24-14} \nc\q{1} \nc\rr{24-5} 
\fill[blue!20] (\p,0) arc(180:90:\q) (\p+\q,\q)--(\rr-\q,\q) arc(90:0:\q) -- (\p,0);
}
{\nc\p{24-10} \nc\q{.3} \nc\rr{24-9} 
\fill[white] (\p,0) arc(180:90:\q) (\p+\q,\q)--(\rr-\q,\q) arc(90:0:\q) -- (\p,0);
}
\foreach \x in {0,1,2,3} {\lv{0+0.5*\x} \lv{5+0.5*\x}}
\foreach \x in {0,1} {\draw[dotted](2+5*\x,0)--(4+5*\x,0);}
\draw[|-|] (26,15)--(26,0);
\node () at (26.6,7.5) {$\b$};
\end{scope}
\end{tikzpicture}}
\vspace{-1em}
\caption{The products $\a\g=\th$ and $\b\g=\phi$.}
\label{fig:3}
\end{center}
\end{figure}

\section*{Acknowledgements}
This work was supported by the Engineering and Physical Sciences Research Council [EP/V002953/1, EP/V003224/1] and the Australian Research Council [FT190100632].  We thank John Truss for his help in proving Lemma \ref{lem:DBL}.

\end{document}